\newcommand{\bS}{\mathbf{S}}
\newcommand{\bs}{\mathbf{s}}
\newcommand{\by}{\mathbf{y}}
\newcommand{\bu}{\mathbf{u}}
\newcommand{\btau}{\ensuremath{\boldsymbol{\tau}}}
\newcommand{\bT}{\mathbf{T}}
\newcommand{\bB}{\mathbf{B}}
\newcommand{\bv}{\mathbf{v}}
\newcommand{\bK}{\mathbf{K}}
\newcommand{\bt}{\mathbf{t}}
\newcommand{\Ai}{\mathrm{Ai}}
\newcommand{\bx}{\mathbf{x}}
\newcommand{\R}{\mathbb{R}}
\newcommand{\Aistat}{\mathrm{Ai}_{\mathrm{stat}}}
\newtheorem{theorem}{Theorem}[section]
\newtheorem{corollary}[theorem]{Corollary}
\newtheorem{lemma}[theorem]{Lemma}
\newtheorem{proposition}[theorem]{Proposition}
\newtheorem{remark}[theorem]{Remark}
\title{$t^{1/3}$ Fluctuation around the shock of TASEP with random initial condition}
\begin{document}

\begin{abstract}
The totally asymmetric exclusion process (TASEP) is one of the solvable models in the KPZ universality class. When TASEP starts with product Bernoulli measure with smaller density on the left of the origin, it presents shocks in the evolution. For a long time, it has been known that fluctuations are the product of Gaussians on the scale $t^{1/2}$ due to initial randomness. In this paper, we will describe how to see the $t^{1/3}$ fluctuations for these initial conditions.

\end{abstract}
\author[X. Zhang]{Xincheng Zhang}
\address{Xincheng Zhang, University of Toronto, Department of Mathematics,
45 St George Street,
Toronto, ON M5S 2E5
, Canada}
\email{xincheng.zhang@mail.utoronto.ca}

\maketitle
\tableofcontents

\section{Introduction}
\subsection{Model and previous results}
In this paper, we study the fluctuations of the totally asymmetric exclusion process(TASEP) around its shock. TASEP is a continuous time interacting particle system on an integer lattice. The dynamic is as follows. Each particle waits an exponential amount of time with rate 1 and then jumps to the right if the right site is empty. If there is a particle in the right site, then the jump is blocked and the exponential clock runs again. For a rigorous construction for an infinite number of particles, see \cite{liggetIPS} The model has been extensively studied because of its physical importance and solvability. We start with a short introduction to the model and some of its important facts.

There are several observables for TASEP. Let $X_t(n)$ be the position of the $n$-th particles at time $t$. For initial conditions, we use the convention that:
\begin{equation}
 \cdots < X_0(2)<X_0(1)<0\leq X_0(0)<X_0(-1)<X_0(-2)<\cdots.
 \end{equation} Equivalently, we can use the occupation variables, $\eta_t(x),x\in \mathbb{Z}$, which is 1 if there is a particle at position $x$  at time $t$ and $0$ if there is no particle at position $x$ at time $t$. There is also the height representation $h(t,x)$,
 \begin{equation}
  h(t,x)=  \begin{cases}
      &2J_{0,t}+\sum_{i=1}^x(1-2\eta_t(i)),\quad x\geq 1\\
      &2J_{0,t}, \quad x=0\\
      &2J_{0,t}-\sum_{i=x+1}^0(1-2\eta_t(i)),\quad x\leq -1,
  \end{cases}
  \end{equation}
  where $J_{0,t}$ is the number of particles that have crossed $0-1$ up to time $t$. We will use the notation interchangeably. 
  
  Under hydrodynamic scaling, the particle density function of TASEP satisfies Burger's equation\cite{FR91}\cite{ST98}. The macroscopic density $u(t,x)$ is defined to be:
 \begin{equation}
 \int_a^b u(t,x)dx = \lim_{t\to\infty}\sum_{i=\lfloor a\varepsilon^{-1}\rfloor}^{\lfloor b\varepsilon^{-1}\rfloor}\varepsilon\eta_{\varepsilon^{-1}t}(i),
 \end{equation}
 and the burger's equation is:
 \begin{equation}
 \partial_t u+\partial_x(u(1-u))=0
 \end{equation}
 with the initial condition being the corresponding density limit for the TASEP initial condition. 
 
If we start with a initial condition such that the particle density on the left of the origin is $\rho_-$ and the density on the right-hand side is $\rho_+$, more precisely,
\begin{equation}
 \rho_{\pm}  = \lim_{N\to\infty} \frac{\sum_{i=0}^{\pm N}\eta_0(i)}{N},
\end{equation}
then the density function solves the Burger's equation starting with the initial condition
\begin{equation}
u(0,x)=\rho_-\mathbf{1}_{x<0}+\rho_+\mathbf{1}_{x\geq0}.
\end{equation}
When $\rho_-<\rho_+$, there is a shock in the solution of Burger's equation, where the shock means that there is a discontinuity in the density. The characteristic line of the Burgers equation with constant density $\rho$ is given by $x(t)=x(0)+(1-2\rho)t$. The line of the shock that satisfies the entropy condition is given by $x(t) = (1-\rho_+-\rho_-)t$. 

We are interested in TASEP starting with the initial conditions being the product of independent Bernoulli random variables, with $\eta(x)\sim \mathrm{Ber}(\rho_-)$ for $x\leq 0$ and $\eta(x)\sim \mathrm{Ber}(\rho_+)$ for $x> 0$, with $\rho_-<\rho_+$. We are interested in the fluctuations of TASEP around the place of the shock. From the hydrodynamic scaling, we know that the place of the shock travels alone the line $x(t)=(1-\rho_+-\rho_-)t$. Thus, we are interested in the normalized height function around $(1-\rho_+-\rho_-)$. Another way of studying the shock is to study the position of the particle which ``identifies" the shock. It is not easy to define such a particle. The position of the shock in TASEP is identified with a second class particle\cite{liggett1999sip}. A second class particle follows the same dynamic as TASEP particles, but whenever a first class particle tries to jump and the second class particle is in the way, they will switch positions. Let $X_t$ denote the position of the second class particle that is initially placed at 0. With the above initial condition, $X_t$ moves with average speed $1-\rho_--\rho_+$ and its fluctuation is Gaussian on the $t^{1/2}$ scale\cite{ferrari1992}, see also \cite{GP90,DKPS89}. The method is purely probabilistic.

During the past 25 years, the integrable structure of TASEP has been extensively studied. The full space TASEP was first solved by \cite{Schutz97} using coordinate Bethe ansatz. The formula given in \cite{Schutz97} is not conducive to asymptotic analysis. Later, \cite{Sas05} \cite{BFPS07} rewrite the determinant and formula using the biorthogonalization problem, from which the transition probability can be derived. Under some special initial conditions, the biorthogonalization problem is solved \cite{BFP06}\cite{BFP08}\cite{BFPS07}. In \cite{MQR2021}, the biorthogonalization problem was solved with a general initial condition. There is also extensive study on the last-pass percolation model, which is a closely related model to TASEP \cite{J2000}. The integrable structures in LPP is also a method for studying TASEP. Stationary TASEP is studied in \cite{bfp2010}. In-homogeneous TASEP is studied in \cite{BBP05}\cite{BCCurrentfluctuationsforTASEP}, both are using the relations to the LPP model. For LPP model with general initial condition, see \cite{CLW16}.

        With all integral probability methods, TASEP starting with product Bernoulli measure with density $\rho_-$ on one side and density $\rho_+$ on the other side is completely studied in \cite{BCCurrentfluctuationsforTASEP}\cite{CFP10}, which gave the proof of the conjectures given in \cite{PS02}, including the case that $\rho_-<\rho_+$ where shock is present. The fluctuation is the product of two normal distributions. This particular case is not the main point of \cite{BCCurrentfluctuationsforTASEP}, since there is a relation between the place of the second class particle and the currents of the particles, for a good survey on it, see \cite{Pablo18}, and for a detailed proof, see \cite{FN24}. Thus, one important heuristic is that if the second class particle position is given by the difference of two random variables, then the fluctuation of the height function around the place of the shock will be the product of two distributions corresponding to the random variables above. 

        In-homogeneous TASEP starting with a deterministic initial condition is also studied. If we start with a periodic initial condition with density $\rho_-$ on the left of the origin and $\rho_+$ on the right of the origin, we see that the fluctuation is the product of two Tracy-Widom GOE distributions living on the $t^{1/3}$ scale\cite{fn2013}. The second class particle is the difference of two random variables, having an independent GOE distribution \cite{FGN19}\cite{FN24}. For the process under critical scaling, see \cite{FN15,qm2018}. The shock behavior under a combination of narrow wedge initial conditions and flat initial conditions is also studied in \cite{fn2013}. For more studies related to the shock behavior in TASEP, see \cite{FN20,FB22,Fer18,chen24,Nej18}.The analog object of shock in the LPP world is the competing interface, which is also studied in \cite{FP05,FMP09,RV23, SS23}.
        \subsection{New results and methods}All examples in the deterministic initial condition have fluctuations on the scale $t^{1/3}$, which come from the evolution of TASEP. The distribution depends on the initial condition around where the characteristic line starts. The one-point distribution of stationary TASEP along the characteristic line is the Baik-Rains distribution, on the scale $t^{1/3}$. Thus, in the case where we start in-homogeneous TASEP with product Bernoulli measures, the fluctuation of the shock hidden under Gaussian should be the product of the Baik-Rains distributions. The purpose of this paper is to reveal this.

        The method is to make a fine study of exact formulas. To see the fluctuation on the $t^{1/3}$ scale, we need to condition on the value of the Gaussian random variables. Meanwhile, we want to maintain the randomness from the initial condition to see the Baik-Rain distribution. The key point of the analysis is that the probability distribution of TASEP depends only on the initial condition around the start of the characteristics on the scale $t^{2/3}$, thus, we are able to achieve both of the above goals. 

        \subsection{Main results}
        Throughout the paper, we define 
        \begin{equation}
            \chi_- = \rho_-(1-\rho_-), \quad \chi_+=\rho_+(1-\rho_+).
        \end{equation}Before we state the new results, we state an old theorem as a comparison. The results for the second class particle appear earlier in \cite{ff94annalsofprobability}, the following theorem is in a form similar to what we are going to prove.
\begin{theorem}\label{thm:bcnormtheorem}\cite{BCCurrentfluctuationsforTASEP}
Let $\rho_-<\rho_+$ and $v = 1-\rho_--\rho_+$, and $\overline{h} = (1-\rho_--\rho_++2\rho_-\rho_+)$ 
\begin{equation}
\begin{split}
\lim_{\varepsilon\to 0}\mathbb{P}&(\frac{h(\varepsilon^{-3/2},v\varepsilon^{-3/2})-\overline{h}\varepsilon^{-3/2}}{\varepsilon^{-3/4}}\geq -(\rho_+-\rho_-)^{1/2}x)=G( (4\chi_+)^{-1/2}x)G( (4\chi_-)^{-1/2}x)
\end{split}
\end{equation}
where $G$ is the standard Gaussian distribution.
\end{theorem}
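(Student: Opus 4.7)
The plan is to exploit a variational representation that decouples the two densities. Construct all relevant TASEPs on a single Harris graphical representation: let $h(t,x)$ denote the height function of the two-sided Bernoulli TASEP of the theorem, and let $h_\pm(t,x)$ denote the height functions of two auxiliary \emph{stationary} TASEPs driven by the same Poisson clocks, started from product Bernoulli($\rho_\pm$) on all of $\mathbb{Z}$ and synchronized with the original initial data on the corresponding half-line (so $h_-(0,x)=h(0,x)$ for $x\leq 0$ and $h_+(0,x)=h(0,x)$ for $x\geq 0$). Via the standard last-passage-percolation representation, in which paths emanating from the left and from the right initial data contribute through a $\max$, one obtains the asymptotic variational identity
\begin{equation*}
h(t,vt) \;=\; \min\!\bigl(h_-(t,vt),\; h_+(t,vt)\bigr) + O\!\bigl(t^{1/3}\bigr),
\end{equation*}
the error being the subleading KPZ correction from the ``losing'' side.

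Next, I would analyze each $h_\pm(t,vt)$ separately. The characteristic of the stationary density-$\rho_\pm$ TASEP passing through $(t,vt)$ starts from $x_\pm = \mp(\rho_+-\rho_-)t$, which is nonzero on the macroscopic scale, so $(t,vt)$ is off the characteristic through the origin. On the characteristic, stationary TASEP has Baik--Rains $t^{1/3}$ fluctuations; off the characteristic those KPZ fluctuations are dominated by Gaussian fluctuations inherited from the initial Bernoulli data near $x_\pm$, and the Ferrari--Fontes type CLT of \cite{ff94annalsofprobability} yields
\begin{equation*}
\frac{h_\pm(t,vt)-\overline{h}\,t}{\sqrt{4\chi_\pm(\rho_+-\rho_-)\,t}} \;\xrightarrow{d}\; \xi_\pm \sim N(0,1).
\end{equation*}
The variance comes from the transport identity $h_\pm(t,vt)\approx h_\pm\bigl(0,\,vt-(1-2\rho_\pm)t\bigr) = h_\pm(0,-x_\pm)$, whose random Bernoulli part has variance $4\chi_\pm|x_\pm|=4\chi_\pm(\rho_+-\rho_-)t$.

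The final ingredient is the asymptotic independence of $\xi_+$ and $\xi_-$. The Gaussian fluctuation of $h_\pm(t,vt)$ is determined, up to a $t^{1/3}$ error, by the initial Bernoulli data in a $t^{2/3}$-window around $x_\pm$ (the KPZ-scale window of influence). Since $|x_+-x_-|=2(\rho_+-\rho_-)t \gg t^{2/3}$, these windows are disjoint for large $t$ and $\xi_+,\xi_-$ are asymptotically independent. Taking the minimum of two independent Gaussians and normalizing, for any $x\in\mathbb{R}$
\begin{equation*}
\mathbb{P}\!\left(\frac{h(t,vt)-\overline{h}t}{t^{1/2}} \geq -(\rho_+-\rho_-)^{1/2}x\right) \;\to\; \mathbb{P}\!\left(\xi_+\geq -\tfrac{x}{\sqrt{4\chi_+}}\right)\mathbb{P}\!\left(\xi_-\geq -\tfrac{x}{\sqrt{4\chi_-}}\right),
\end{equation*}
which equals $G\!\bigl((4\chi_+)^{-1/2}x\bigr)G\!\bigl((4\chi_-)^{-1/2}x\bigr)$ via $1-G(-y)=G(y)$.

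The main obstacle is the variational identity $h=\min(h_-,h_+)+O(t^{1/3})$: it is transparent for deterministic step initial data but only approximate for random ones, and making the $O(t^{1/3})$ error rigorous requires moderate-deviation tail bounds for the losing-side KPZ fluctuations together with quantitative control on the $t^{2/3}$-localization of $h_\pm$. An alternative, taken in \cite{BCCurrentfluctuationsforTASEP}, is to bypass the variational identity and work directly with the exact Fredholm determinant for $\mathbb{P}(h(t,vt)\geq s)$: under the scaling $t=\varepsilon^{-3/2}$ the kernel has two well-separated saddle points at $\rho=\rho_\pm$, and a steepest-descent analysis shows the determinant factorizes asymptotically into the product of two scalar Gaussian contributions, producing the product of $G$'s above directly.
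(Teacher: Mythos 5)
Note first that Theorem~\ref{thm:bcnormtheorem} is not proved in this paper: it is quoted from \cite{BCCurrentfluctuationsforTASEP} as background and motivation, so there is no ``paper's own proof'' to compare against. Your coupling/variational sketch is therefore a genuinely different route from the determinantal asymptotics of the cited source, and is closer in spirit to the purely probabilistic second-class-particle arguments of \cite{ferrari1992,ff94annalsofprobability} that the introduction also mentions.

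As a heuristic the sketch is sound and reproduces the stated formula exactly. With the paper's up-growing height convention the variational formula for $h(t,\cdot)$ is an infimum over starting points, so the shock height is indeed a \emph{minimum} of the two stationary heights; the variance $4\chi_\pm(\rho_+-\rho_-)t$ of $h_\pm(0,\mp(\rho_+-\rho_-)t)$ and the normalization $G((4\chi_\pm)^{-1/2}x)$ check out; and the asymptotic independence from disjointness of the two $O(t^{2/3})$ influence windows (separated by $2(\rho_+-\rho_-)t\gg t^{2/3}$) is exactly the mechanism that the present paper later makes precise, via the $K^l/K^r$ kernel splitting, for the finer $t^{1/3}$ statement. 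You correctly flag the main gap yourself: $h(t,vt)=\min(h_-(t,vt),h_+(t,vt))+O(t^{1/3})$ is not justified. One subtlety worth emphasizing beyond ``moderate deviations are needed'': with your coupling the pointwise domination $h(0,\cdot)\leq h_\pm(0,\cdot)$ fails almost surely (each $h_\pm$ carries \emph{fresh} Bernoulli data on the half-line where it disagrees with $h$), so attractiveness does not give the one-sided inequality $h\leq\min(h_-,h_+)$ for free. One must instead split the discrete variational formula at the origin, argue that the restricted infimum over $y\le 0$ (resp.\ $y\ge 0$) equals $h_-(t,vt)$ (resp.\ $h_+(t,vt)$) up to $O(t^{1/3})$ because the argmin localizes in a $t^{2/3}$-window around $(\rho_--\rho_+)t$ (resp.\ $(\rho_+-\rho_-)t$), \emph{uniformly} over the random initial data, and control the losing-side contribution with tail bounds. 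That is nontrivial work; the alternative you point to---steepest descent on the exact Fredholm kernel, which exhibits two well-separated Gaussian saddles---is what \cite{BCCurrentfluctuationsforTASEP} actually carries out, and is the more economical path.
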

This theorem indicates that the rescaled height function is the maximum of two Gaussian random variables. It is easy to see where the randomness comes from: it is the initial height at $x = (\rho_--\rho_+)\varepsilon^{-3/2}$ and $x= (\rho_+-\rho_-)\varepsilon^{-3/2}$, with fluctuations on the scale of $\varepsilon^{-3/4}$, which is the limit of the following two random variables when $\bt=1$:
\begin{equation}\label{eqn:NlNrdef}
    \begin{split}
        N_r = -(\rho_+-\rho_-)\rho_+\varepsilon^{-3/2}\bt+\sum_{0\leq x<(\rho_+-\rho_-)\varepsilon^{-3/2}\bt}\eta_0(x)\\
        N_l = -(\rho_+-\rho_-)\rho_-\varepsilon^{-3/2}\bt+\sum_{(\rho_--\rho_+)\varepsilon^{-3/2}\bt\leq x<0}\eta_0(x).\\
    \end{split}
\end{equation} 
 Now we want to move around the shock place on the same scale, $\varepsilon^{-3/4}$, to make the mean contribution from this shift cancel the difference between two Gaussians above. Now we look at the height function at this place, we should see the fluctuations coming from the competition from the TASEP evolution, on $\varepsilon^{-1/2}$ scale. 
\begin{theorem}\label{thm:productat0}
In the same notation as in Theorem \eqref{thm:bcnormtheorem}. Define $\tilde{\bs}_- = \chi_-^{-2/3}(1-2\rho_-)(1-2\rho_-)\bs, \quad \tilde{\bs}_+ = \chi_+^{-2/3}(1-2\rho_+)\bs$. We have 
\begin{equation}
\begin{split}
\lim_{\varepsilon \to 0}\mathbb{P}&(\frac{h(\varepsilon^{-3/2}\bt,v\varepsilon^{-3/2}\bt-\frac{N_r+N_l}{\rho_+-\rho_-})-\overline{h}\varepsilon^{-3/2}\bt-\frac{(2\rho_--1)N_r+(2\rho_+-1)N_l}{\rho_+-\rho_-}}{\varepsilon^{-1/2}}\geq -2\bs)\\
&=F_{0}(\bt^{-1/3}\tilde{\bs}_-)F_{0}(\bt^{-1/3}\tilde{\bs}_+)
\end{split}
\end{equation}
where $F_{w}(s)$ is a family of distribution indexed by $w \in \mathbb{R}$, which gives the one-point distribution of Airy stationary process\cite{FS06}\cite{BFP08}. $F_0(s)$ is called the Baik-Rains distribution \cite{BR00baikrains}.  Its complete definition will be given in Section (\ref{sec:identifylimitingkernel}).
\end{theorem}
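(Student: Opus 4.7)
The plan is to derive the limit from the biorthogonalization formula of \cite{MQR2021} by localizing near the two backward characteristics and performing a conditional steepest-descent analysis in the spirit of \cite{BCCurrentfluctuationsforTASEP,CFP10,fn2013}. With $t = \bt\varepsilon^{-3/2}$ and $y_{\mathrm{obs}} = vt - (N_r+N_l)/(\rho_+-\rho_-)$, the two backward Burgers characteristics from $(t, y_{\mathrm{obs}})$ issue from
\[
x_{\pm}^{*} \;=\; \pm(\rho_+-\rho_-)t \;-\; \frac{N_r+N_l}{\rho_+-\rho_-},
\]
which sit strictly inside the density-$\rho_{\pm}$ regions of the initial profile. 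A direct computation shows that the deterministic vertical shift $\bigl((2\rho_--1)N_r + (2\rho_+-1)N_l\bigr)/(\rho_+-\rho_-)$ is exactly the correction that equalizes the two leading-order contributions to $h(t,y_{\mathrm{obs}})$ issued from $x_{\pm}^{*}$ with $\overline{h}\,t$, so after conditioning on $(N_r,N_l)$ the two characteristics compete on equal footing and the only remaining randomness is intrinsic to each side.

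Conditional on $\eta_0$, the one-point distribution of $h(t,y_{\mathrm{obs}})$ is a Fredholm determinant $\det(I - K_t^{\eta_0})$. The key step is to split this kernel as $K_t^{\eta_0} = K^{+} + K^{-} + R$, where $K^{\pm}$ collects the contour/summation contributions whose saddle lies near $x_{\pm}^{*}$. Because $K_t^{\eta_0}$ only senses the initial data inside windows $W_\pm$ of size $O(t^{2/3}) = O(\varepsilon^{-1})$ around each saddle, while $x_{+}^{*} - x_{-}^{*} \sim \varepsilon^{-3/2}$ is macroscopic, a steepest-descent estimate shows that $R$ is exponentially small. Moreover, conditioning on the global linear statistics $(N_r, N_l)$, which are sums of $O(\varepsilon^{-3/2})$ Bernoullis, only shifts the conditional mean of $\eta_0$ inside each $W_\pm$ by a subleading amount, so the local law remains asymptotically product Bernoulli with density $\rho_{\pm}$. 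Hence in the limit
\[
\det(I - K_t^{\eta_0}) \;=\; \det(I - K^{+})\,\det(I - K^{-}) \;+\; o(1).
\]

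Each $K^{\pm}$ is then, asymptotically, the kernel of stationary TASEP of density $\rho_{\pm}$ evaluated on its own characteristic. By \cite{FS06,BFP08}, under the KPZ scalings intrinsic to density $\rho_{\pm}$ this kernel converges to the Airy-stationary kernel $\Aistat$ with parameter $w = 0$, whose one-point marginal is the Baik-Rains distribution $F_0$. Matching those intrinsic scales with the $\varepsilon^{-1/2}$ scale used in the theorem converts $\bs$ into the rescaled arguments $\bt^{-1/3}\tilde{\bs}_{\pm}$; combined with the decoupling of the previous paragraph, this yields the claimed product.

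The main obstacle is the decoupling step: one has to quantitatively show that the Bethe-type contours in the \cite{MQR2021} formula can be deformed into two disjoint descent curves around $x_{\pm}^{*}$ with exponentially small cross contributions, while simultaneously controlling the bias introduced by conditioning on the global statistics $(N_r,N_l)$. This requires a more refined saddle-point analysis than in the one-saddle situations of \cite{BFP08,BCCurrentfluctuationsforTASEP}, because the two saddles live on opposite sides of the shock and both the kernel and the conditional initial measure must be handled simultaneously. Once this is in hand, the identification step is a direct application of the known stationary one-point asymptotics on each side.
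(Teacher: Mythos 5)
Your high-level picture---two backward characteristics, a splitting of the kernel into two pieces each converging to a Baik--Rains kernel, and a decoupling argument---matches the paper's strategy, but the decoupling mechanism you propose is not the one the paper uses and is not realizable as stated inside the \cite{MQR2021} framework you invoke. In \cite{MQR2021} the $\Phi^n_k$ piece of the kernel is defined by a biorthogonalization/random-walk-hitting problem, not by a single multi-saddle contour integral; there are no ``Bethe-type contours'' to deform into two disjoint descent curves, and there is no small remainder $R$. What the paper actually does is split the sum over $k$ in the biorthogonal kernel at $k=n_l$ (the number of particles initially to the left of the origin), giving an \emph{exact} identity $K_{X_0}=K^l+K^r$ with no remainder. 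The key observation, coming straight from the biorthogonalization relation \eqref{eqn:biorthogonalizationrelation}, is the algebraic fact $K^rK^l=0$, which lets one write
\[
\det\bigl(I-\overline{1}^a(K^l+K^r)\overline{1}^a\bigr)=\det\bigl((I-\overline{1}^aK^r\overline{1}^a)(I-\overline{1}^aK^l\overline{1}^a)+\overline{1}^aK^r1_aK^l\overline{1}^a\bigr),
\]
so decoupling is reduced to showing the trace norm of the cross term $\overline{1}^aK^r1_aK^l\overline{1}^a$ vanishes. That bound is obtained by conjugating with the multiplication operators $M_{1-\rho_\pm}$ and exploiting the gap $(1-\rho_+)/(1-\rho_-)<1$, together with the uniform trace-norm bounds from the Appendix. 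There is no steepest-descent estimate showing a remainder is ``exponentially small,'' and without the $K^rK^l=0$ identity the factorization does not follow.

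A second genuine gap is your treatment of the conditioning on $(N_r,N_l)$. You assert that the conditional local law around each saddle stays asymptotically Bernoulli and that the shifts merely ``subleading,'' but that is precisely where the paper does nontrivial work. The paper does not condition; it integrates $\det(I-K_{X_0})$ against the Bernoulli product measure, and the random shift $-\frac{N_r+N_l}{\rho_+-\rho_-}$ appears in the scaling of $n$ and $a$. Making this work requires (i) a new splitting lemma for the biorthogonal functions $h^n_k$ (Lemma~\ref{splittinglemma}, which is not in \cite{MQR2021}) to separate the contributions of $X_0^{ll},X_0^{lr},X_0^{rl},X_0^{rr}$, and (ii) a careful bookkeeping in Propositions~\ref{epiconvergenceleft} and~\ref{prop:mainconvergencetheoremright} of which labelled particle sits at the random characteristic endpoint and of the cancellation at order $\varepsilon^{-3/4}$ between that random location and the starting point of the comparison random walk, so that the local data becomes a Brownian motion asymptotically independent of $N_r,N_l$. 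Your proposal asserts the conclusion of these steps (``the local law remains asymptotically product Bernoulli'') rather than establishing it. You correctly flag decoupling as the main obstacle, but as written the proposal is an outline of the intended result: the splitting lemma, the $K^rK^l=0$ identity, the trace-norm machinery, and the asymptotic-independence argument---each of which is essential---are all missing.
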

Using the relation between height representation and particle representation,
\begin{equation}
\mathbb{P}(h(t,x-y)\geq x+y) = \mathbb{P}(X_t(y)\geq x-y).
\end{equation}
We can transform the result into particle representation.
\begin{corollary}\label{Cor:productparticleat0}
\begin{equation}
\begin{split}
\lim_{\varepsilon\to 0}\mathbb{P}&(X_t(\rho_+\rho_-\varepsilon^{-3/2}\bt+\frac{\rho_-N_r+\rho_+N_l}{\rho_+-\rho_-}-\varepsilon^{-1/2}\bs)\geq (1-\rho_+-\rho_-)\varepsilon^{-3/2}\bt-\frac{N_r+N_l}{\rho_+-\rho_-})\\
&=F_{0}(\bt^{-1/3}\tilde{\bs}_-)F_{0}(\bt^{-1/3}\tilde{\bs}_+).
\end{split}
\end{equation}
\end{corollary}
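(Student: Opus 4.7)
The plan is to derive the corollary directly from Theorem~\ref{thm:productat0} by applying the height-to-particle duality
$$\mathbb{P}(h(t,x-y)\geq x+y) \;=\; \mathbb{P}(X_t(y)\geq x-y)$$
that is recalled just above the statement. The proof is essentially a change of variables: identify $(x,y)$ so that the event inside the probability in Theorem~\ref{thm:productat0} has the form $h(t,x-y)\ge x+y$, and then reinterpret the resulting equality in the particle representation.

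Setting $t=\varepsilon^{-3/2}\bt$, the event in Theorem~\ref{thm:productat0} is $h(t,X)\geq H$ with
$X = v\varepsilon^{-3/2}\bt - \tfrac{N_r+N_l}{\rho_+-\rho_-}$ and
$H = \overline{h}\varepsilon^{-3/2}\bt + \tfrac{(2\rho_--1)N_r+(2\rho_+-1)N_l}{\rho_+-\rho_-} - 2\varepsilon^{-1/2}\bs$.
Imposing $X=x-y$ and $H=x+y$ gives $y=(H-X)/2$, and using the identity $\overline{h}-v = 2\rho_+\rho_-$ yields
$$y \;=\; \rho_+\rho_-\varepsilon^{-3/2}\bt + \frac{\rho_-N_r+\rho_+N_l}{\rho_+-\rho_-} - \varepsilon^{-1/2}\bs,$$
which is exactly the particle index appearing in the corollary, while $x-y$ matches the right-hand side of the inequality there. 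Since the duality is a pathwise identity for each realization of the initial configuration, one can condition on the initial data (which determines $N_r$ and $N_l$), apply the identity for each fixed $x,y$, and then take expectations; this converts the probability in Theorem~\ref{thm:productat0} into the probability in the corollary, and the limit is given by the same product of Baik--Rains distributions.

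I expect no substantive obstacle here, as all the analytic content sits in Theorem~\ref{thm:productat0} and the corollary is a bookkeeping translation. The only minor technicality is that $X_t(\cdot)$ is defined only for integer arguments, whereas the expression for $y$ above is real-valued; the standard remedy is to replace $y$ by $\lfloor y\rfloor$, introducing an $O(1)$ shift of the threshold $x-y$ that is negligible on the $\varepsilon^{-1/2}$ fluctuation scale by the continuity of the limiting distribution $F_0$. This completes the plan.
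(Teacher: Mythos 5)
Your proposal is correct and matches the paper's own route: the paper states the duality $\mathbb{P}(h(t,x-y)\geq x+y) = \mathbb{P}(X_t(y)\geq x-y)$ immediately before the corollary and derives the particle statement by exactly this change of variables. Your algebraic check (using $\overline{h}-v = 2\rho_+\rho_-$ to read off $y$) is the whole content, and your remarks about the identity being pathwise (so that the random arguments $N_r,N_l$ can be absorbed by conditioning on the initial configuration) and about the $\lfloor y\rfloor$ discretization being negligible at the $\varepsilon^{-1/2}$ scale are the right footnotes to make it airtight.
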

We can upgrade it to a process version, by looking at the height function at $t = \varepsilon^{-3/2}\bt+\varepsilon^{-1}\bx$. It turns out to be the maximum of two $\text{Airy}_{\text{stat}}$ processes.
\begin{theorem}\label{thm:processversionheightfunction}
In the same notation as in Theorem \eqref{thm:bcnormtheorem}, define $y_\varepsilon = \varepsilon^{-3/2}\bt+\varepsilon^{-1}\bx$. Let $\tilde{\bx}_-=(\rho_--\rho_+)2^{-1}\chi_-^{-1/3}\bx,\quad \tilde{\bx}_+=(\rho_+-\rho_-)2^{-1}\chi_+^{-1/3}\bx.$
\begin{equation}
\begin{split}
\lim_{\varepsilon \to 0}\mathbb{P}&(\frac{h(y_\varepsilon,vy_\varepsilon-\frac{N_r+N_l}{\rho_+-\rho_-})-\overline{h}y_\varepsilon-\frac{(2\rho_--1)N_r+(2\rho_+-1)N_l}{\rho_+-\rho_-}}{\varepsilon^{-1/2}}\geq -2
\bs)\\
&=F_{\bt^{-2/3}\tilde{\bx}_-}(\bt^{-1/3}\tilde{\bs}_-)F_{\bt^{-2/3}\tilde{\bx}_+}(\bt^{-1/3}\tilde{\bs}_+).
\end{split}
\end{equation}    
\end{theorem}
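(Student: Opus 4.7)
The plan is to follow the same blueprint established for Theorem \ref{thm:productat0}, treating the parameter $\bx$ as a perturbation at the time-shift scale $\varepsilon^{-1}$, which is exactly the $t^{2/3}$ scale at which the initial condition enters the KPZ scaling. Concretely, I first rewrite the event in particle form via $\mathbb{P}(h(t,x-y)\ge x+y)=\mathbb{P}(X_t(y)\ge x-y)$, so that the probability becomes a Fredholm determinant through the biorthogonalization formula (say in the MQR form), with the kernel depending on the random initial data $\eta_0$. I then condition on the sigma-algebra generated by $\{\eta_0(x):-(\rho_+-\rho_-)\varepsilon^{-3/2}\bt\le x<(\rho_+-\rho_-)\varepsilon^{-3/2}\bt\}$, which fixes $N_l$ and $N_r$; the shifts $-(N_l+N_r)/(\rho_+-\rho_-)$ and $-((2\rho_--1)N_r+(2\rho_+-1)N_l)/(\rho_+-\rho_-)$ inside the statement are exactly those which remove the contribution of $N_l,N_r$ from the mean and let the residual $\varepsilon^{-1/2}$-scale fluctuations become visible.

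Next, I perform the steepest descent / saddle-point analysis of the resulting kernel. Because two characteristic rays emanate from the shock with distinct velocities $1-2\rho_-$ and $1-2\rho_+$, the associated critical points of the integrand are distinct, and the contour deformation localizes the kernel near two disjoint neighborhoods of size $\varepsilon^{-1}=t^{2/3}$ in the initial lattice. As $\varepsilon\to 0$ the cross terms are exponentially suppressed and the Fredholm determinant factorizes into two independent Fredholm determinants, one per ray. Each factor is a stationary-TASEP-style kernel looked at along its own characteristic, where the external randomness to the left (resp.\ right) of the relevant $t^{2/3}$-window already contributes, after conditioning, through the shifts identified above. The analysis of each factor is then parallel to the one-shock analysis in Theorem \ref{thm:productat0}, but evaluated off the characteristic at spatial offset $(1-2\rho_\pm)\varepsilon^{-1}\bx$.

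To identify $F_{w_\pm}$, I track how the time shift $\varepsilon^{-1}\bx$ propagates through the saddle. A shift by $\varepsilon^{-1}\bx$ in $y_\varepsilon$ moves the end of the $\rho_\pm$-characteristic by $(1-2\rho_\pm)\varepsilon^{-1}\bx$, which after rescaling by the KPZ spatial scale $(\chi_\pm y_\varepsilon)^{2/3}\sim\chi_\pm^{2/3}\bt^{2/3}\varepsilon^{-1}$ produces precisely the parameters $\bt^{-2/3}\tilde{\bx}_\pm=(\rho_\pm-\rho_\mp)\chi_\pm^{-1/3}\bx/(2\bt^{2/3})$; similarly the $\bs$-variable rescaling yields $\bt^{-1/3}\tilde{\bs}_\pm$. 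Matching to the kernel for the Airy stationary process (as identified in Section \ref{sec:identifylimitingkernel}) finishes the limit to $F_{\bt^{-2/3}\tilde{\bx}_-}(\bt^{-1/3}\tilde{\bs}_-)\,F_{\bt^{-2/3}\tilde{\bx}_+}(\bt^{-1/3}\tilde{\bs}_+)$.

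The main obstacle is the last step of the decoupling: making sure that, conditional on $N_l$ and $N_r$, the error from freezing the bulk of the initial data while letting the two $t^{2/3}$-windows fluctuate is negligible in trace-class norm, and that the limit commutes with the conditioning and with the removal of the Gaussian part. This requires a uniform-in-$\varepsilon$ trace-class bound on tails of the kernel, together with a careful check that the two-ray saddle-point contours can be chosen so that the mixed (left/right) block of the kernel is exponentially small. Once this decoupling estimate is established, the convergence of each individual factor to $F_{w_\pm}$ reduces to the Baik-Rains-type analysis already used for Theorem \ref{thm:productat0} and the identification of the Airy stationary one-point distribution in \cite{BFP08}.
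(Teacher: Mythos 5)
Your overall scaling picture is right — $\bx$ enters at the $\varepsilon^{-1}=t^{2/3}$ scale, each characteristic gets shifted by $(1-2\rho_\pm)\varepsilon^{-1}\bx$, and the factors should be stationary-Airy one-point distributions — but the crucial step, namely \emph{why} the Fredholm determinant factorizes into two independent pieces, is not substantiated, and the mechanism you propose for it does not match the structure of the kernel. You argue that ``the two characteristic rays have distinct saddle points, so contour deformation localizes the kernel near two disjoint $t^{2/3}$-windows and the mixed block is exponentially small.'' But the TASEP kernel here is $K=\sum_{k=0}^{n-1}\Psi^n_k\Phi^n_k$, a sum of rank-one terms whose $\Phi^n_k$ are defined implicitly through a biorthogonalization/hitting-probability problem; it is not a single double contour integral with two competing critical points, and separated saddles do not by themselves produce an operator-level factorization of $\det(I-\overline{1}^aK\overline{1}^a)$. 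What the paper actually uses is purely algebraic: split the $k$-sum at $k=n_l$ to get $K=K^l+K^r$ and invoke the biorthogonal relation \eqref{eqn:biorthogonalizationrelation}, which forces $K^rK^l=0$; this yields the exact identity $\det(I-\overline{1}^aK\overline{1}^a)=\det\big((I-\overline{1}^aK^r\overline{1}^a)(I-\overline{1}^aK^l\overline{1}^a)+\overline{1}^aK^r 1_aK^l\overline{1}^a\big)$. The cross term is then killed not by saddle separation but by a concrete trace-norm estimate that exploits the mismatch of geometric conjugations: after conjugating $K^l$ by $M_{1-\rho_-}$ and $K^r$ by $M_{1-\rho_+}$, the cross term acquires a factor $M_{(1-\rho_-)/(1-\rho_+)}\overline{1}^a$ whose trace norm is $O(\varepsilon^{1/2})$ once combined with the compensating $\big(\tfrac{1-\rho_+}{1-\rho_-}\big)^{a}$ factors. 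Without identifying this mechanism (or an equivalent one), the decoupling step in your argument is a gap, not a technicality.

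Two further points. First, you omit the ingredient that converts each half-kernel into the stationary Baik--Rains form, namely Lemma \ref{splittinglemma}, the identity $h^{N_1+N_2}_{k+N_2}(0,z)=\sum_{z_3}(Q^{N_2}(z_3,z)-G_{0,N_2}(z_3,z))h^{N_1}_k(0,z_3)$; this is what lets the paper peel off the ``outer'' part of each half-kernel and match the ``inner'' part against the stationary kernel of \cite{bfp2010}. Second, the ``condition on the sigma-algebra fixing $N_l,N_r$'' framing is not quite what happens: the paper does not condition but instead builds the random shifts $\tfrac{\rho_-N_r+\rho_+N_l}{\rho_+-\rho_-}$ and $-\tfrac{N_r+N_l}{\rho_+-\rho_-}$ directly into the scaling of $n_\varepsilon, a_\varepsilon$ (equations \eqref{scaling}) and then proves, via a Donsker-type argument inside Proposition \ref{prop:mainconvergencetheoremright}, that the hitting probabilities only see $O(\varepsilon^{-1})$ particles near each characteristic start point and are asymptotically independent of $N_l,N_r$. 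Conditioning would require a separate justification that the conditional kernel still has the biorthogonal/Fredholm structure, which your proposal does not supply.
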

The following is the particle representation.
\begin{corollary}\label{cor:processversionparticle}
    \begin{equation}
\begin{split}
\lim_{\varepsilon\to 0}\mathbb{P}&(X_t(\rho_+\rho_-y_\varepsilon+\frac{\rho_-N_r+\rho_+N_l}{\rho_+-\rho_-}-\varepsilon^{-1/2}\bs)\geq (1-\rho_+-\rho_-)y_\varepsilon-\frac{N_r+N_l}{\rho_+-\rho_-})\\
&=F_{\bt^{-2/3}\tilde{\bx}_-}(\bt^{-1/3}\tilde{\bs}_-)F_{\bt^{-2/3}\tilde{\bx}_+}(\bt^{-1/3}\tilde{\bs}_+).
\end{split}
\end{equation}
\end{corollary}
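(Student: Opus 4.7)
The plan is to derive Corollary \ref{cor:processversionparticle} from Theorem \ref{thm:processversionheightfunction} by the height/particle duality recalled in the excerpt,
\[\mathbb{P}(h(t,x-y)\geq x+y) = \mathbb{P}(X_t(y)\geq x-y),\]
applied with $t=y_\varepsilon$. The substitution is purely algebraic; the only non-trivial issue is that $X_t$ is indexed by integers while the arguments of interest are continuous.

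First I introduce shorthand for the two arguments appearing in Theorem \ref{thm:processversionheightfunction},
\begin{align*}
A &:= vy_\varepsilon-\frac{N_r+N_l}{\rho_+-\rho_-},\\
B &:= \overline{h}y_\varepsilon+\frac{(2\rho_--1)N_r+(2\rho_+-1)N_l}{\rho_+-\rho_-}-2\varepsilon^{-1/2}\bs,
\end{align*}
so that the event in Theorem \ref{thm:processversionheightfunction} is $\{h(y_\varepsilon,A)\geq B\}$. I then solve the linear system $x-y=A$, $x+y=B$ for $y$ and $x-y$. Using $\overline{h}-v=2\rho_-\rho_+$, a direct computation gives
\[y = \frac{B-A}{2} = \rho_-\rho_+\, y_\varepsilon + \frac{\rho_- N_r+\rho_+ N_l}{\rho_+-\rho_-}-\varepsilon^{-1/2}\bs,\]
while $x-y=A=(1-\rho_+-\rho_-)y_\varepsilon-\frac{N_r+N_l}{\rho_+-\rho_-}$ is exactly the right-hand side of the inequality in the corollary. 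Plugging these into the duality identity matches the two probabilities pointwise in $\varepsilon$, and the limit follows immediately from Theorem \ref{thm:processversionheightfunction}.

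The only point requiring a moment's care is the integrality of $y$. Since $N_r,N_l\in\mathbb{Z}$, the fractional part of $y$ is determined entirely by the continuous parameters $\bt,\bx,\bs$, and replacing $y$ by $\lfloor y\rfloor$ is equivalent to perturbing $\bs$ by at most $\varepsilon^{1/2}$. This perturbation vanishes in the $\varepsilon\to 0$ limit thanks to continuity of the Airy stationary one-point distribution $F_w$ in its argument (and in $w$). This rounding step is the only non-algebraic ingredient of the argument; once it is handled, the corollary is nothing more than Theorem \ref{thm:processversionheightfunction} rewritten in particle coordinates, so I do not expect any substantive obstacle beyond this bookkeeping.
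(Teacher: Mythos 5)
Your derivation is correct and, at the level of content, is exactly the duality argument the paper uses to pass between Theorem~\ref{thm:processversionheightfunction} and Corollary~\ref{cor:processversionparticle}: the identity $\mathbb{P}(h(t,x-y)\geq x+y)=\mathbb{P}(X_t(y)\geq x-y)$ with $t=y_\varepsilon$, followed by solving $x-y=A$, $x+y=B$. Your computation of $y=(B-A)/2$ using $\overline h-v=2\rho_-\rho_+$ matches the particle index in the corollary, and the treatment of the integer-rounding of $y$ by perturbing $\bs$ by $O(\varepsilon^{1/2})$, absorbed by continuity of $F_w(\cdot)$, is the standard way to handle this bookkeeping.

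One point of caution: the paper's logical flow is the reverse of yours. In the Outline, the authors announce that they will prove Corollary~\ref{cor:processversionparticle} directly (this is what Sections~4--6 accomplish via the Fredholm determinant analysis culminating in \eqref{limitingformula}), and Theorem~\ref{thm:processversionheightfunction} is then obtained from it by the very same duality. Your argument derives the Corollary \emph{from} the Theorem; since the duality relates the two bijectively this is logically harmless, but if you intended it as a standalone proof of the Corollary rather than as an equivalence, you would be relying on a statement that in the paper's own logical order is downstream of the Corollary. Framed as ``the two statements are equivalent, and the hard analytic work proves either one,'' your write-up is correct and complete.
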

We can also state the version for the second class particle.
\begin{theorem}\label{thm:secondclassparticle}
Let $X^{\mathrm{2nd}}$ denote the position of the second class particle initially placed at the origin, with the same notation as in Theorem \eqref{thm:processversionheightfunction},
    \begin{multline}
        \lim_{\varepsilon \to 0}\mathbb{P}(X^{\mathrm{2nd}}(y_\varepsilon)\geq vy_\varepsilon-\frac{N_r+N_l}{\rho_+-\rho_-}+\bs\varepsilon^{-1/2}) \\= \mathbb{P}(\chi_-^{2/3}\Aistat(\bt^{-2/3}\tilde{\bx}_-)-\chi_+^{2/3}\Aistat'(\bt^{-2/3}\tilde{\bx}_+)\geq 2(\rho_+-\rho_-)\bt^{-1/3}\bs)
    \end{multline}
    where $\mathrm{Ai}_{\mathrm{stat}},\mathrm{Ai}_{\mathrm{stat}}'$ are two independent Airy stationary process defined in \cite{FS06}\cite{BFP08}. Its one-point distribution is $F_w(s)$, which is introduced in Theorem \eqref{thm:productat0}.
\end{theorem}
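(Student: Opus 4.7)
The plan is to deduce Theorem \ref{thm:secondclassparticle} from Theorem \ref{thm:processversionheightfunction} by identifying the position of the second-class particle with the crossing point of the two independent Airy stationary processes that govern the limiting height function at the shock. Macroscopically, the height function around the shock is a concave corner: its two linear continuations $h_L,h_R$ have slopes $1-2\rho_-$ and $1-2\rho_+$ and cross at $x=vt$, precisely the location of the second-class particle. Linearising the microscopic analogue gives
\begin{equation*}
X^{\mathrm{2nd}}(t)-vt\;\approx\;\frac{\xi_+(0)-\xi_-(0)}{2(\rho_+-\rho_-)},
\end{equation*}
where $\xi_\pm$ collect the Gaussian initial-randomness contributions (which are exactly $N_l,N_r$ after rescaling) and the $\varepsilon^{-1/2}$-scale KPZ contributions along the two characteristics. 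The factor $2(\rho_+-\rho_-)$ in the theorem is the slope of $h_L-h_R$ at the shock.

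To turn this heuristic into a proof I would use the standard identity (cf.\ \cite{FN24,Pablo18}) that writes $\{X^{\mathrm{2nd}}(t)\geq r\}$ as an event about the shock height function, and then apply Theorem \ref{thm:processversionheightfunction}. After subtracting the Gaussian correction $\overline{h}y_\varepsilon+\tfrac{(2\rho_--1)N_r+(2\rho_+-1)N_l}{\rho_+-\rho_-}$ and rescaling by $\varepsilon^{-1/2}$, that theorem identifies the pair $(\xi_-,\xi_+)$ at the shock with $(\chi_-^{2/3}\Aistat(\bt^{-2/3}\tilde{\bx}_-),\chi_+^{2/3}\Aistat'(\bt^{-2/3}\tilde{\bx}_+))$, where the two $\Aistat$ processes are independent; this independence is exactly what produces the product form $F_{w_-}F_{w_+}$ in that theorem, and the minimum of these two processes gives the actual shock height (the concave corner). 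Translating the event $\{X^{\mathrm{2nd}}(y_\varepsilon)\geq vy_\varepsilon-\tfrac{N_r+N_l}{\rho_+-\rho_-}+\bs\varepsilon^{-1/2}\}$ through this identification gives
\begin{equation*}
\bigl\{\chi_-^{2/3}\Aistat(\bt^{-2/3}\tilde{\bx}_-)-\chi_+^{2/3}\Aistat'(\bt^{-2/3}\tilde{\bx}_+)\geq 2(\rho_+-\rho_-)\bt^{-1/3}\bs\bigr\},
\end{equation*}
where $\bt^{-1/3}$ records the KPZ time-rescaling in which $\Aistat$ is evaluated on the $\bt^{2/3}$ spatial scale.

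The main obstacle is that Theorem \ref{thm:processversionheightfunction} is stated as one-point (in $\bs$) convergence at a single $\bx$, which gives only the product of marginal CDFs. To extract the crossing event I need genuine joint convergence of the two halves $(h_L,h_R)$ (or equivalently of the shock height function at two nearby $\bx$), together with the statement that the two limiting Airy stationary processes are independent in the full joint law, not merely through their one-point marginals. This upgrade follows by the same saddle-point and kernel analysis used for the one-point statement in Section \ref{sec:identifylimitingkernel}: the limiting extended biorthogonal kernel decouples into two diagonal blocks indexed by the $\rho_-$ and $\rho_+$ characteristics, with off-diagonal contributions vanishing as $\varepsilon\to 0$, and re-running the Fredholm-determinant asymptotics jointly on both blocks gives the required joint asymptotic independence. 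Once this joint convergence is in hand, the theorem follows from the continuous mapping theorem applied to the crossing functional, together with routine bookkeeping of the scaling constants $\chi_\pm^{2/3}$, $\bt^{-1/3}$ and $2(\rho_+-\rho_-)$, all of which descend from the KPZ scaling of the two halves and the linearisation of the concave corner at the shock.
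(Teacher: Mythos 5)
Your proposal heads in the right direction (deduce the second-class statement from Airy-stationary convergence via an argument of \cite{FN24}), but there is a genuine gap in how you propose to close it, and the route differs from the paper's in a substantive way.

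The paper does not try to derive the second-class result from Theorem \ref{thm:processversionheightfunction}. Instead it directly invokes the main theorem of \cite{FN24}, which is stated for \emph{modified one-sided} height functions $h^-(0,x)$ (all particles on $x\geq 0$ removed) and $\tilde h^+(0,x)$ (all sites on $x<0$ filled), not for the actual shock height function $h$. That theorem already encodes the reduction from the second-class particle to the difference of two limiting processes; its hypothesis (c) requires \emph{process-level} convergence of $h^-$ and $\tilde h^+$ along the characteristic direction $u$. Because the present problem carries a random centering by $N_l, N_r$, the paper replaces (c) with a modified (c$'$) that subtracts exactly the $N_l,N_r$-dependent shift, notes that the proof of the FN24 theorem goes through verbatim, and then verifies (c$'$) by observing that the required convergences of $h^-$ and $\tilde h^+$ are essentially the $K^l$ and $K^r$ analyses, with two specific (and noted as harmless) differences: the $k$-sum runs over the full range without the extension step of \eqref{eqn:extendto0}, and the left part of the initial condition is fully stacked rather than Bernoulli$(\rho_-)$ (handled because the hitting probability only feels the configuration in a $\varepsilon^{-1}$-window around the characteristic).

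Your proposal, by contrast, starts from the \emph{full} shock height function and Theorem \ref{thm:processversionheightfunction}, then correctly notices that a one-point product of CDFs is not enough to extract a crossing event, and proposes to fill the gap by ``re-running the Fredholm-determinant asymptotics jointly on both blocks'' to get joint independence and then invoking the continuous mapping theorem. This is not a proof: it asserts a joint, process-level functional CLT for a pair of coupled height functions (plus tightness plus a nontrivial argument that a crossing event is a continuity set of the limit law) that is nowhere established in the paper. The whole point of citing the FN24 theorem is precisely to avoid having to prove such a joint functional statement about the shock height function: FN24 reduces the second-class particle question to \emph{marginal}, process-in-$u$ convergence of the two one-sided height functions $h^-$ and $\tilde h^+$, whose independence in the limit is a consequence of the hydrodynamic separation of the backward geodesics (hypothesis (b)), not of a kernel block-decoupling you would have to prove. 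You should instead (i) state the FN24 theorem with its assumptions, (ii) modify (c) to (c$'$) to carry $N_l,N_r$, and (iii) identify $\mathcal{H}^\pm$ with rescaled $\Aistat$ processes by adapting the convergence of $K^l$ and $K^r$ to the one-sided initial data $h^-,\tilde h^+$.
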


\subsection{Outline}Theorem \eqref{thm:productat0}\eqref{Cor:productparticleat0} are the special case of \eqref{thm:processversionheightfunction},\eqref{cor:processversionparticle}, and \eqref{thm:processversionheightfunction} are equivalent to \eqref{cor:processversionparticle}, so we will prove Corollary \eqref{cor:processversionparticle}. In Section 3, we develop all the ingredients that we need for the TASEP distribution function. It is an extension of \cite{MQR2021} into the case with density $\rho$. In Section 4, we will explain how the Fredholm determinant factor into two pieces in the limit, each piece representing the fluctuations traveling from one of the characteristics lines, so the question boils down to analyzing two kernels. In Section 5, we will show how these two kernels converge to the operator we want and how to identify the limiting object. In Section 6, we will prove all the convergence results used in Section 5. In Section 7, we prove Theorem \eqref{thm:secondclassparticle}. Thanks to a nice theorem in \cite{FN24}, which we can directly apply to conclude the place about the second class particle. In the appendix, we prove the trace norm bounds on the TASEP kernel. 
\subsection{Acknowledgements}
The author would like to thank Jeremy Quastel for suggesting this problem and for many discussions on paper\cite{MQR2021}, which enable the author to perform such an analysis. The problem was asked by Pablo Ferrari in an online talk of Patrick Ferrari. The research is partially supported by NSERC.
\section{TASEP distribution function}
In this section, we derive the TASEP distribution function starting from any deterministic initial condition. The formula in \cite{MQR2021} is only for the configuration with density $1/2$. We extend the formula to the case with density $\rho$. Note that Lemma (\ref{splittinglemma}) is new and not in \cite{MQR2021}.

We start from the biorthogonalization characterization of the TASEP distributions.

\begin{theorem}\cite{b}\label{TASEPformula}
Suppose that TASEP starts with particles labeled $1,2, \cdots$. Then for $t>0$ we have 
\begin{equation}
\mathbb{P}(X_t(n)>a) = \det(I-\overline{1}^aK_t\overline{1}^a)_{l^2(\mathbb{Z})}
\end{equation}
where $\overline{1}^a(x) = \mathbf{1}_{x\leq a}$, $\det$ is the Fredholm determinant, and $K_t$ is a integral kernel,
\begin{equation}
K_t(x_1,x_2) = \sum_{k=0}^{n-1}\Psi^n_k(x_1)\Phi^n_k(x_2),
\end{equation}
where 
\begin{equation}
\Psi^n_k(x) = \frac{1}{2\pi i}\int_{\Gamma_0}dw\frac{(1-w)^k}{w^{x+k+1-X_0(n-k)}}e^{t(w-1)}
\end{equation}
where $\Gamma_0$ is any positive oriented simple loop including the pole at $w = 0$ but not the one at $w = 1$. The function $\Phi^n_k(x),k=0,\cdots,n-1$, are defined implicitly by
\begin{enumerate}
\item The biorthogonalization relation: 
\begin{equation}\label{eqn:biorthogonalizationrelation}
    \sum_{x\in \mathbb{Z}}\Psi^n_k(x)\Phi^n_k(x) = \mathbf{1}_{k=l};
\end{equation}
\item  $\Phi^n_k(x)$ is a polynomial of degree at most $n-1$ in $x$ for each $k$.
\end{enumerate}
\end{theorem}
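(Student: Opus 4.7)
The plan is to derive this Fredholm determinant representation from Sch\"utz's explicit Bethe-ansatz formula for the TASEP transition probability, and then reorganize it through a biorthogonalization step. The starting point is Sch\"utz's formula expressing the joint law $\mathbb{P}(X_t(1)=y_1,\ldots,X_t(n)=y_n)$ as an $n\times n$ determinant whose entries are contour integrals $F_{i-j}(y_i-X_0(j),t)$. Because the particles are labeled in spatial order (higher label to the left), the event $\{X_t(n)>a\}$ coincides with $\{X_t(1)>a,\ldots,X_t(n)>a\}$, so the target probability is obtained by summing Sch\"utz's joint density over $\{y_1>\cdots>y_n>a\}$.

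Next I would apply the Eynard--Mehta / Lindstr\"om--Gessel--Viennot machinery to turn this sum of $n\times n$ determinants into a Fredholm determinant on $\ell^2(\mathbb{Z})$. Concretely, I would expand the determinant in its $n!$ summands, perform each column sum over $y>a$ via an Andr\'eief-type identity, and repackage the outcome as $\det(I-\overline{1}^aK_t\overline{1}^a)$ with $K_t$ of finite rank $n$. The rank-$n$ kernel naturally takes the form $K_t(x_1,x_2)=\sum_{k=0}^{n-1}\Psi^n_k(x_1)\Phi^n_k(x_2)$, where $\Psi^n_k$ and $\Phi^n_k$ span, respectively, the ``row'' and ``column'' spaces produced by the reduction.

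The third step is to match these functions with the explicit ones in the statement. The $\Psi^n_k$ can be read off directly from the contour-integral form of the transition kernel: the displayed expression for $\Psi^n_k$ is exactly what falls out after telescoping the sums that arise during the row reduction. The duals $\Phi^n_k$ are less explicit; one is free to modify them within any affine slice that preserves biorthogonality, since $K_t$ depends only on the two spans. The polynomial-degree-at-most-$n-1$ condition is then what pins the $\Phi^n_k$ down uniquely within that family.

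The main obstacle is precisely in this last step: one must show that the polynomial-of-degree-at-most-$n-1$ requirement is compatible with, and uniquely selects, a biorthogonal dual to $\{\Psi^n_k\}$, and that the resulting kernel actually coincides with the one produced by the Eynard--Mehta reduction. Equivalently, the bilinear pairing between the $n$-dimensional span of $\{\Psi^n_k\}$ and the space of polynomials of degree at most $n-1$ must be nondegenerate. Once this pairing is shown to be nondegenerate and the two forms of the kernel are matched, the claimed Fredholm determinant identity follows.
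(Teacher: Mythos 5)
The paper states this result as a citation (\cite{b}) and provides no proof of it, so there is no internal proof to compare against; what follows is an assessment of your blind proposal on its own merits.

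Your outline is the standard route taken in the literature that the paper itself alludes to in its introduction (the Sch\"utz determinant formula combined with the biorthogonalization reformulation of Sasamoto and Borodin--Ferrari--Pr\"ahofer--Sasamoto). The three-stage plan --- (i) sum Sch\"utz's $n\times n$ determinant over the ordered event $\{y_1>\cdots>y_n>a\}$, (ii) pass to a Fredholm determinant on $\ell^2(\mathbb{Z})$ via the Lindstr\"om--Gessel--Viennot/Eynard--Mehta and Andr\'eief machinery, (iii) read off the $\Psi^n_k$ from the row-reduced contour integrals and fix the dual family $\Phi^n_k$ by imposing biorthogonality together with the degree constraint --- is the correct architecture, and you correctly identify where the nontrivial work sits: the existence and uniqueness of a biorthogonal dual supported on polynomials of degree at most $n-1$.

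Two points deserve sharpening before this becomes a proof rather than a plan. First, the reduction in step (ii) is not a naked application of the Eynard--Mehta theorem: Sch\"utz's entries $F_{i-j}(y_i-X_0(j),t)$ involve signed contour integrals that are not transition kernels of a Markov process, so the conditional-ensemble hypotheses of the usual LGV/Eynard--Mehta framework are not literally met; one has to carry out the column summations and the row operations by hand (telescoping, repeated summation by parts, use of the identity $F_{k+1}(x,t)=\sum_{y\ge x}F_k(y,t)$) to arrive at the rank-$n$ kernel. Simply invoking Eynard--Mehta glosses over this. Second, the nondegeneracy of the pairing between $\mathrm{span}\{\Psi^n_k\}$ and polynomials of degree $\le n-1$ is not a small afterthought; it is precisely what makes ``biorthogonality plus degree constraint'' a well-posed definition of $\Phi^n_k$, and you should exhibit the upper-triangular structure (with nonzero diagonal) of the pairing matrix $\bigl(\sum_x \Psi^n_k(x)\,x^l\bigr)_{0\le k,l\le n-1}$ to close this gap. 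With those two points filled in, your proposal would reproduce the standard proof of the cited theorem.
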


The formula obtained in the original paper is for the multipoint distribution, but in this paper, we will only use the one-point distribution formula. Notice that the Fredholm determinant is invariant under conjugation, i.e. $\det(I-A) = \det(I-BAB^{-1})$. For TASEP with initial density $\rho$, we need to conjugate the kernel with $(1-\rho)^{x_1-x_2}$ so that under correct scaling for $n, a, t$, the kernel will converge to a limiting kernel. In \cite{MQR2021}, the paper split the kernel into several parts that have two advantages: 1. The effect of the initial condition is compressed into a hitting operator, which has a good probabilistic interpretation. 2. Each part converges to a limiting operator. The work done is \cite{MQR2021} is for TASEP with initial density $1/2$. We will follow their method and idea, coming up with a formula for TASEP with initial density $\rho$. We first conjugate the kernel and write it as
\begin{equation}
K_{\rho,t}(x_1,x_2) = (1-\rho)^{x_1-x_2}K_t(x_1,x_2) = \sum_{k=0}^{n-1}\Psi^n_{\rho,k}(x_1)\Phi^n_{\rho,k}(x_2),
\end{equation}
where 
\begin{equation}
\begin{split}
&\Psi^n_{\rho,k}(x) = \frac{1}{2\pi i}\int_{\Gamma_0}dw(\frac{1-\rho}{\rho})^k \frac{(1-\rho)^{x-X_0(n-k)}(1-w)^k}{w^{x+k+1-X_0(n-k)}}e^{t(w-1)};\\
&\Phi^n_{\rho,k}(x) = (1-\rho)^{X_0(n-k)-x}(\frac{\rho}{1-\rho})^{k}\Phi^n_{k}(x).
\end{split}
\end{equation}
Now we want to further rewrite these two functions as a composition of other operators. Before that, we need to introduce some operators. Define
\[Q_\rho(x,y) = (1-\rho)^{x-y-1}\rho\mathbf{1}_{x>y}.\] It is easy to see that $x-y$ follows the Geom$[\rho]$ distribution, therefore the operator $Q_\rho(x,y)$ can be thought of as the one-step transition probability of a Geom$[\rho]$ random walk jumping to the left; we will use $B(x)$ to denote such a random walk. Meanwhile, its adjoint $Q^*_\rho(x,y) = Q_\rho(y,x)$, is the transition probability of a Geom$[\rho]$ random walk jumping to the right, which we denote as $B^*(x)$. Its inverse is 
\[Q^{-1}_\rho =\frac{1}{\rho}\mathbf{1}_{x=y-1}-\frac{1-\rho}{\rho}\mathbf{1}_{x=y} = \mathbf{1} + \frac{1}{\rho}\nabla^+,\] where $\nabla^+f(x) = f(x+1)-f(x)$. Then define 
\[R_{\rho,t}(x,y) = e^{-t}\frac{(1-\rho)^{x-y}t^{x-y}}{(x-y)!}\mathbf{1}_{x\geq y}=\frac{1}{2\pi i}\int_{\Gamma_0}dw \frac{(1-\rho)^{x-y}e^{t(w-1)}}{w^{x-y+1}} = e^{-\rho t+(1-\rho)t\nabla^-}(x,y).\]\\
Now we can write $\Psi^n_{\rho,k}$ as follows:
\begin{equation}\label{psiformula}
\Psi^n_{\rho,k} = R_{\rho,t}Q^{-k}_{\rho}\delta(X_0(n-k)).
\end{equation}
It can be verified directly using the contour formula that it is the same as the formula defined in Theorem (\ref{TASEPformula}). Meanwhile, the function $\Phi_{\rho,k}^n$, which is defined by the biorthogonal condition, satisfies the following difference equation:
\begin{theorem}\label{hnkdef}\cite{MQR2021}
Fix $0\leq k<n$ and consider particles at $X_0(1)>X_0(2)>\cdots>X_0(r)$. Let $h^n_k(l,z)$ to be the unique solution to the initial-boundary value problem for the backwards heat equation
\begin{numcases}{}
                (Q^*_\rho)^{-1}h^n_k(l,z) = h^n_k(l+1,z), l<k, z\in \mathbb{Z}\label{eq1}\\
               h^n_k(k,z) = \rho(1-\rho)^{X_0(n-k)-z-1},z\in \mathbb{Z}\label{eq2}\\ 
               h^n_k(l,X_0(n-l)) = 0, l<k.\label{eq3}
\end{numcases}
Then the function $\Phi^n_k$ from Theorem \ref{TASEPformula} are given by
\begin{equation}
\Phi^n_{\rho,k}(z)= \sum_{y\in \mathbb{Z}}\frac{1-\rho}{\rho}h^n_k(0,y)R^{-1}_{t,\rho}(y,z).
\end{equation}
\end{theorem}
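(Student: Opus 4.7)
The plan is to directly verify that the formula for $\Phi^n_{\rho,k}$ satisfies both of the conditions that characterize the biorthogonal functions in Theorem \ref{TASEPformula} (after conjugation by $(1-\rho)^{x_1-x_2}$), namely the biorthogonality relation \eqref{eqn:biorthogonalizationrelation} with $\Psi^n_{\rho,k}$ from \eqref{psiformula} and the polynomial-degree condition on the un-conjugated $\Phi^n_k(z)=((1-\rho)/\rho)^k(1-\rho)^{z-X_0(n-k)}\Phi^n_{\rho,k}(z)$. Since these two conditions have a unique solution, this will identify $\Phi^n_{\rho,k}$.

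First I would confirm that the initial-boundary value problem \eqref{eq1}--\eqref{eq3} is well-posed. Written as $h^n_k(l,z-1)=\rho\,h^n_k(l+1,z)+(1-\rho)\,h^n_k(l,z)$, the recursion propagates data backward in $z$, while the inverted form propagates forward in $z$; starting from the explicit terminal datum \eqref{eq2} at level $l=k$ and using the boundary value $h^n_k(l,X_0(n-l))=0$ to pin down the one free parameter at each lower level, the solution is uniquely constructed level by level down to $l=0$. Following \cite{MQR2021}, it is cleanest to encode this probabilistically with the Geom$[\rho]$ random walk $B^*$ corresponding to $Q^*_\rho$: then $h^n_k(0,z)$ is an explicit expectation over paths of $B^*$ started near $z$ and absorbed on the obstacle set $\{X_0(n-l)\}_{0\leq l<k}$, which automatically encodes \eqref{eq3}.

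Next I would verify biorthogonality. Inserting \eqref{psiformula} and the proposed formula for $\Phi^n_{\rho,l}$ into $\sum_z\Psi^n_{\rho,k}(z)\Phi^n_{\rho,l}(z)$ and using $R^{-1}_{\rho,t}R_{\rho,t}=I$ collapses the sum to
\begin{equation*}
\tfrac{1-\rho}{\rho}\sum_y h^n_l(0,y)(Q^{-k}_\rho\delta_{X_0(n-k)})(y).
\end{equation*}
Rewriting the right factor via the random-walk expression for $Q^{-k}_\rho\delta_{X_0(n-k)}$ as a signed/weighted count of paths ending at $X_0(n-k)$, and the left factor via the probabilistic representation of $h^n_l(0,y)$, the identity $\sum_z\Psi^n_{\rho,k}\Phi^n_{\rho,l}=\mathbf{1}_{k=l}$ reduces to an exchange of summations: for $k=l$ the two walks compose into a single geometric sum that evaluates to $1$ thanks to \eqref{eq2}; for $k\neq l$ the absorption on $\{X_0(n-l')\}$ enforced by \eqref{eq3} annihilates the overlap, with the cases $k<l$ and $k>l$ handled symmetrically via the support structure of $Q^{\pm}_\rho$. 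This is the same strategy as in \cite{MQR2021}.

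The polynomial condition is cleaner: using the contour formula $R^{-1}_{\rho,t}(y,z)=\tfrac{1}{2\pi i}\oint dw\,(1-\rho)^{y-z}e^{-t(w-1)}w^{y-z}$, applying $R^{-1}_{\rho,t}$ to the sequence $h^n_k(0,y)$ built from $k$ applications of the first-order operator $Q^*_\rho$ to the geometric \eqref{eq2} produces, once the conjugation factor $(1-\rho)^{X_0(n-k)-z}(\rho/(1-\rho))^{-k}$ is stripped, a polynomial in $z$ of degree bounded by $k+(n-1-k)=n-1$, where the two contributions come respectively from the iteration and from the action of $R^{-1}_{\rho,t}$ on a geometric. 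The main obstacle throughout is the careful bookkeeping of the conjugation factors $(1-\rho)^{\cdot}$ and $(\rho/(1-\rho))^{\cdot}$ as they are pushed through $Q^{\pm k}_\rho$, $R^{\pm}_{\rho,t}$ and their adjoints; the underlying algebra is identical to the $\rho=1/2$ argument in \cite{MQR2021}, but it must be redone symbol by symbol to confirm the stated formula, since this is precisely the density-$\rho$ generalization advertised in Section 2.
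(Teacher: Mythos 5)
Your overall strategy is the same as the paper's: verify directly that the proposed $\Phi^n_{\rho,k}$ satisfies both the biorthogonality relation and the polynomial-degree condition, then invoke uniqueness. That plan is sound, but two of the details you announce would, if carried out as stated, go wrong.

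In the biorthogonality step the cases $l\le k$ and $l>k$ are \emph{not} symmetric, and the asymmetry is where the argument lives. After collapsing the $R^{-1}_{\rho,t}R_{\rho,t}$ pair, the sum reduces to $\tfrac{1-\rho}{\rho}(Q^*_\rho)^{-l}h^n_k(0,X_0(n-l))$. For $l\le k$ one reads this off from \eqref{eq1}--\eqref{eq3} directly as $\tfrac{1-\rho}{\rho}h^n_k(l,X_0(n-l))=\mathbf{1}_{l=k}$. For $l>k$, however, the vanishing does not come from absorption or the support structure of $Q^{\pm}_\rho$; it comes from the algebraic fact that $(1-\rho)^{-z}\in\ker(Q^*_\rho)^{-1}$, so that $(Q^*_\rho)^{-(l-k)}h^n_k(k,\cdot)\equiv 0$ once a single $(Q^*_\rho)^{-1}$ hits the geometric terminal datum \eqref{eq2}. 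Your sentence claiming the two cases are ``handled symmetrically via the support structure of $Q^{\pm}_\rho$'' would not produce a proof of the $l>k$ case. Separately, your degree accounting $k+(n-1-k)=n-1$ does not describe the actual mechanism: $R^{-1}_{\rho,t}$ is a (convergent, degree-preserving) convolution, so it does not add degree. The correct argument is an induction on $l$ showing $(1-\rho)^z h^n_k(l,z)$ is a polynomial of degree at most $k-l$, giving degree $\le k$ at $l=0$; the bound $n-1$ then follows trivially from $k\le n-1$. Your number happens to be a valid upper bound, but the decomposition into ``iteration'' and ``$R^{-1}$ on a geometric'' contributions is not how the estimate is obtained.
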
  
The theorem is adapted to the density $\rho$  case, so we will write the proof here.
\begin{proof}
Using (\ref{eq1}), we have $$h^n_k(l+1,z) = \frac{1}{\rho}h^n_k(l,z-1)-\frac{1-\rho}{\rho}h^n_k(l,z).$$
Using telescoping sum, we have 
\begin{equation}\label{relation}
\frac{1}{(1-\rho)^{n-1}\rho}h^n_k(l,z-n)-\frac{1-\rho}{\rho}h^n_k(l,z) = \sum_{j=0}^{n-1}\frac{1}{(1-\rho)^j}h^n_k(l+1,z-j).
\end{equation}
By relation (\ref{eq3}), we set the value at $(l,X_0(n-l))$ to be 0. Thus, knowing the value $h^n_k(l+1,\cdot)$, we can uniquely solve the value at $h^n_k(l,\cdot)$, for $l<k$.

Now we first prove that $(1-\rho)^zh^n_k(0,z)$ is a polynomial of degree at most $k$ in the variable $z$ by induction. First, by (\ref{eq2}), $(1-\rho)^zh^n_k(k,z)$ is a polynomial of degree 0. Now assume $(1-\rho)^zh^n_k(l,z)$ is a polynomial of degree at most $k-l$ for some $0<l\leq k$. For $z \geq X_0(n-l+1)$, using (\ref{relation}), we have
\[-\frac{1-\rho}{\rho}h^n_k(l-1,z) = \sum_{j=0}^{z-X_0(n-l+1)-1}\frac{1}{(1-\rho)^j}h^n_k(l,z-j).\] By multiplying $(1-\rho)^z$ on both sides, the right side is the sum of at most $z$ constant terms of polynomials of degree at most $k-l$, thus $h^n_k(l-1,z)$ is a polynomial of degree at most $k-l+1$. For $z < X_0(n-l+1)$, using (\ref{relation}), we have \[\frac{1}{\rho}h^n_k(l-1,z) = \sum_{j=0}^{X_0(n-l+1)-z-1}(1-\rho)^{X_0(n-l+1)-z-j-1}h^n_k(l,X_0(n-l+1)-j)\] for the same reasoning as above.
Now we can verify that the function $\Phi^n_{\rho,k}(z)$ satisfies the two conditions in (\ref{TASEPformula}). Using (\ref{psiformula}), 
\begin{equation}
\begin{split}
\sum_{z}\Psi^n_{\rho,l}(z)\Phi^n_{\rho,k}(z) &=  \sum_{z_1,z_2}\sum_{z}R_{\rho,t}(z,z_1)Q^{-l}_{\rho}(z_1,X_0(n-l))\frac{1-\rho}{\rho}h^n_k(0,z_2)R_{\rho,t}^{-1}(z_2,z)\\
& = \sum_{z \in \mathbb{Z}}\frac{1-\rho}{\rho}Q^{-l}_{\rho}(z,X_0(n-l))h^n_k(0,z)  = \frac{1-\rho}{\rho}(Q^*_\rho)^{-l}h^n_k(0,X_0(n-l)).
\end{split}
\end{equation}
For $l\leq k$, using the relations (\ref{eq1}),(\ref{eq2}), we have 
\begin{equation}
\frac{1-\rho}{\rho}(Q^*_\rho)^{-l}h^n_k(0.X_0(n-l)) = \frac{1-\rho}{\rho}h^n_k(l,X_0(n-l))=\mathbbm{1}_{k=l}.
\end{equation}
For $l>k$, using the fact that $\frac{1}{(1-\rho)^z}\in \ker(Q^*)^{-1}$:
\begin{equation}
(Q^*)^{-l}h^n_k(0,X_0(n-l)) = (Q^*)^{-(l-k-1)}(Q^*)^{-1}h^n_k(k,X_0(n-l))=0.
\end{equation}
Now we check $(1-\rho)^x\Phi^n_k(x)$ is a polynomial of degree at most $n-1$ in $x$ for each $k$. Using the explicit formula of $R_{\rho,t}^{-1}$, we have 
\begin{equation}
(1-\rho)^x\Phi^n_k(x) = \sum_y\frac{1-\rho}{\rho}h^n_k(0,y+x)e^{t}\frac{(1-\rho)^{y+x}t^{y}}{y!}\mathbbm{1}_{y\geq 0}.
\end{equation}
Using the fact that $(1-\rho)^{y+x}h^n_k(0,y+x)$ is a polynomial in $y+x$ with degree at most $k$, the sum in $y$ is absolutely convergent, thus the left-hand side is a polynomial in $x$ with degree at most $k$.
\end{proof}
\begin{remark}
Note that the dependence of $h^n_k(l,z)$ on the positions of the particles $X_0(1)>X_0(2)>\cdots>X_0(n)$ is not explicit in the notation. This dependence will be important in the following sections; thus we use the notation $h^{n,X_0}_k$to denote its dependence on the initial condition. Whenever the dependence causes no confusion, we will omit it to avoid heavy notation.
\end{remark}
When $z \leq X_0(n-l)$, the function $h^N_{k}(l,z)$ has a probabilistic meaning. 
\begin{lemma}
Define stopping times $$\tau^{l,n}=\min\{m\in\{l,\cdots,n-1\}: B^*_m > X_0(n-m)\}$$ with the convention that $\min \emptyset = \infty$. Then \[h^n_k(l,z) = \mathbb{P}_{B^*_{l-1}=z}(\tau^{l,n}=k).\]
\end{lemma}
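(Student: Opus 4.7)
The plan is to show that the probabilistic expression $g^n_k(l,z) := \mathbb{P}_{B^*_{l-1}=z}(\tau^{l,n}=k)$ satisfies the three defining conditions (\ref{eq1}), (\ref{eq2}), (\ref{eq3}) of $h^n_k(l,z)$ from Theorem \ref{hnkdef} on the region $z \leq X_0(n-l)$, and then invoke uniqueness. The backbone is a backward induction in $l$, starting from the terminal level $l = k$ and descending to $l = 0$.

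The backward equation (\ref{eq1}) is obtained by a one-step strong Markov decomposition on the jump $B^*_{l-1} \to B^*_l$. For $k > l$, conditioning on $B^*_l = y$ splits the probability according to whether $y > X_0(n-l)$ (in which case $\tau^{l,n} = l \neq k$, contributing zero) or $y \leq X_0(n-l)$ (in which case the problem restarts with starting level $l+1$), yielding
\begin{equation*}
g^n_k(l,z) \;=\; \sum_{y \leq X_0(n-l)} Q^*_\rho(z,y)\, g^n_k(l+1, y).
\end{equation*}
A short algebraic computation using the explicit form $Q^*_\rho(z,y) = \rho(1-\rho)^{y-z-1}\mathbf{1}_{y>z}$ shows that this convolution identity is equivalent to $\tfrac{1}{\rho} g^n_k(l,z-1) - \tfrac{1-\rho}{\rho} g^n_k(l,z) = g^n_k(l+1,z)$ precisely when $z \leq X_0(n-l)$, which is the inverse-operator form of (\ref{eq1}).

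The boundary relation (\ref{eq3}) is immediate: since the Geom$[\rho]$ jump is almost surely $\geq 1$, starting from $B^*_{l-1} = X_0(n-l)$ forces $B^*_l > X_0(n-l)$, so $\tau^{l,n} = l$ and hence $\mathbb{P}(\tau^{l,n} = k) = 0$ for every $k > l$. The terminal condition (\ref{eq2}) at $l = k$ reduces to computing $\mathbb{P}(B^*_k > X_0(n-k) \mid B^*_{k-1} = z)$, which is a geometric tail; this serves as the base case of the induction.

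The main care is needed because of the domain restriction $z \leq X_0(n-l)$. Unlike $h^n_k$, which is defined on all of $\mathbb{Z}$, the function $g^n_k$ is not controlled (indeed generally vanishes) for $z > X_0(n-l)$, so uniqueness cannot be applied globally. However, on the restricted region the recursion $h^n_k(l, z-1) = \rho\, h^n_k(l+1, z) + (1-\rho)\, h^n_k(l, z)$ is anchored by $h^n_k(l, X_0(n-l)) = 0$ and propagates downward in $z$; having verified that $g^n_k$ satisfies the same recursion and anchoring, and using that the induction hypothesis at level $l+1$ already covers $z \leq X_0(n-l-1)$ (which exceeds $X_0(n-l)$ since $X_0$ is decreasing in its argument), the inductive step pins down $g^n_k(l, z) = h^n_k(l, z)$ for all $z \leq X_0(n-l)$.
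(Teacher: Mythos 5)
Your proof takes the same overall route as the paper's---verify that $g^n_k(l,z):=\mathbb{P}_{B^*_{l-1}=z}(\tau^{l,n}=k)$ satisfies the defining conditions (\ref{eq1})--(\ref{eq3}) and conclude by uniqueness---but you are more careful in two places, and the extra care is not merely cosmetic. The paper's verification of (\ref{eq1}) decomposes over the first jump with an unrestricted sum, $\mathbb{P}_{B^*_{l-1}=z-1}(\tau^{l,n}=k)=\sum_{j\geq1}\mathbb{P}_{B^*_l=z+j-1}(\tau^{l+1,n}=k)\,\rho(1-\rho)^{j-1}$. As written this overcounts: for $j$ with $z+j-1>X_0(n-l)$ the walk has already crossed at step $l$ (so $\tau^{l,n}=l\neq k$), yet $\mathbb{P}_{B^*_l=z+j-1}(\tau^{l+1,n}=k)$ need not vanish when consecutive $X_0$ values are not adjacent. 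Your restricted decomposition $g^n_k(l,z)=\sum_{y\leq X_0(n-l)}Q^*_\rho(z,y)\,g^n_k(l+1,y)$ is the exact one, and you correctly observe that it still collapses to the $(Q^*_\rho)^{-1}$ recursion for $z\leq X_0(n-l)$. Second, the paper finishes by citing ``uniqueness of (\ref{eq1})--(\ref{eq3}),'' but that boundary value problem determines $h^n_k$ on all of $\mathbb{Z}$, whereas $g^n_k(l,z)=0$ for $z>X_0(n-l)$ and $k>l$ while $(1-\rho)^zh^n_k(l,z)$ is a nonzero polynomial; the identity can only hold for $z\leq X_0(n-l)$, and global uniqueness cannot be invoked verbatim. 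Your backward induction in $l$, anchored at $g^n_k(l,X_0(n-l))=0=h^n_k(l,X_0(n-l))$ and propagated downward in $z$, with the level-$(l+1)$ hypothesis applied only over $z\leq X_0(n-l)<X_0(n-l-1)$, is the precise way to conclude on the domain where the lemma is actually used---and it mirrors the downward propagation the paper itself employs to solve the BVP in the proof of Theorem \ref{hnkdef}.
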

\begin{proof}
When $l = k$, $\mathbb{P}_{B^*_{k-1}=z}(\tau^{k,n}=k)$ is the probability that the random walk starts from $z$ and goes above $X_0(n-k)$ in the next jump, so 
\begin{equation}
\begin{split}
\mathbb{P}_{B^*_{k-1}=z}(\tau^{k,n}=k) &= \sum_{j = 1}^\infty \mathbb{P}(B^*_k = X_0(n-k)+j)\\
&=\sum_{j=1}^\infty (1-\rho)^{X_0(n-k)+j-z-1}\rho = \rho(1-\rho)^{X_0(n-k)-z-1} = h^n_k(k,z).
\end{split}
\end{equation}
If $l<k, \mathbb{P}_{B^*_{l-1}=X_0(n-l)}(\tau^{l,n}=k) = 0$ since the random walk starts at $X_0(n-l)$, then it will definitely stay above $X_0(n-l)$ in the first jump. Lastly, we verify condition (\ref{eq1}). Notice 
\begin{equation}
\begin{split}
\mathbb{P}_{B^*_{l-1}=z-1}(\tau^{l,n}=k)& = \sum_{j = 1}^\infty\mathbb{P}_{B^*_{l}=z+j-1}(\tau^{l+1,n}=k)P(B^*_l-B^*_{l-1} = j)\\
&=\sum_{j = 1}^\infty\mathbb{P}_{B^*_{l}=z+j-1}(\tau^{l+1,n}=k)\rho(1-\rho)^{j-1}\\
&= \mathbb{P}_{B^*_l=z}(\tau^{l+1,n}=k)\rho+(1-\rho)\sum_{j = 1}^\infty\mathbb{P}_{B^*_{l}=z+j}(\tau^{l+1,n}=k)\rho(1-\rho)^{j-1}\\
&=\mathbb{P}_{B^*_l=z}(\tau^{l+1,n}=k)\rho+(1-\rho)\mathbb{P}_{B^*_{l-1}=z}(\tau^{l,n}=k).\\
\end{split}
\end{equation}
which is exactly the relation (\ref{eq1}). By the uniqueness of equations (\ref{eq1})-(\ref{eq3}), we proved the lemma.
\end{proof}
Now we define 
\begin{equation}
G_{0,n}(z_1,z_2)=\sum_{k=0}^{n-1}\frac{1-\rho}{\rho}Q^{n-k}(z_1,X_0(n-k))h^n_k(0,z_2).
\end{equation}
Using Theorem (\ref{hnkdef}) and equation (\ref{psiformula}), we can write our kernel as:
\begin{equation}\label{kernelGform}
\sum_{k=0}^{n-1}\Psi^n_{\rho,k}(x_1)\Phi^n_{\rho,k}(x_2) = R_{\rho,t}Q^{-n}_{\rho}G_{0,n}R^{-1}_{t,\rho}(x_1,x_2).
\end{equation}
When $z_2\leq X_0(n)$, $G_{0,n}$ is the probability for the walk starting at $z_2$ at time $-1$, to end up at $z_1$ after $n$ steps, having hit the curve $(X_0(n-m))_{m=0,\cdots,n-1}$ in between. To see it, using the memoryless property of the geometric distribution, we have for all $z \leq X_0(n)$,
\begin{equation}
\mathbb{P}_{B^*_{-1}=z}(\tau^{0,n} = k,B^*_k=y) = \rho(1-\rho)^{y-X_0(n-k)-1}\mathbb{P}_{B_{-1}^*=z}(\tau^{0,n}=k).
\end{equation}
Then 
\begin{equation}\label{probabilityinterpretforG}
\begin{split}
G_{0,n}(z_1,z_2) &= \sum_{k=0}^{n-1}\mathbb{P}_{B^*_{-1}=z_2}(\tau^{0,n}= k)(Q^*)^{n-k}(X_0(n-k),z_1)\\
&=\sum_{k=0}^{n-1}\sum_{z>X_0(n-k)}\mathbb{P}_{B^*_{-1}=z_2}(\tau^{0,n}=k,B^*_k=z)(Q^*)^{n-k-1}(z,z_1)\\
&=\mathbb{P}_{B^*_{-1}=z_2}(\tau^{0,n}<n,B^*_{n-1}=z_1).
\end{split}
\end{equation}Now we want to extend the expression    (\ref{probabilityinterpretforG}) to all $z_2\in \mathbb{Z}$. We first have an extension for the kernel $Q^n(x,y)$. For each fixed $x$ and $n\geq1$, $(1-\rho)^{y}Q^n(x,y)$ extends in $y$ to a polynomial $(1-\rho)^{-y}\overline{Q}(x,y)$ of degree $n-1$ where
\begin{equation}
\overline{Q}^{(n)}(x,y)=\frac{1}{2\pi i}\int \frac{(1+v)^{x-y-1}}{\rho^n(1-\rho)^{x-y-n}v^n}dv.
\end{equation}
Notice that 
\begin{equation}
\overline{Q}^{(n)}(x,y) = Q^n(x,y) \text{ for } x-y\geq 1.
\end{equation}
However, $\overline{Q}^{(n)}$ behaves no longer like a semigroup, since $\overline{Q}^{(n)}\overline{Q}^{(m)}$ is divergent. Moreover, $Q\overline{Q}^{(n)}, \overline{Q}^{(n)}Q$ are also divergent. 
Now we define the hitting time of a geometric random walk starting from right to left, which is 
\begin{equation}
\tau = \min \{m\geq 0: B_m>X_0(m+1)\}
\end{equation}
where $B_m$ is random walk with Geom($\rho$) in the negative direction. Now we can give the extension for $G$:
\begin{lemma}
For all $z_1,z_2 \in \mathbb{Z}$, we have 
\begin{equation}
G_{0,n}(z_1,z_2) = \mathbb{E}_{B_0=z_1}[\overline{Q}^{(n-\tau)}(B_\tau,z_2)1_{\tau<n}]
\end{equation}
\end{lemma}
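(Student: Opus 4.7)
The plan is to reduce the claimed identity to an identity of two polynomials in $z_2$ of degree at most $n-1$, then verify it on the infinite set $\{z_2 \leq X_0(n)\}$ by combining the probabilistic formula~\eqref{probabilityinterpretforG} with a time-reversal of the geometric random walk.

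First I would establish that both sides, after multiplication by $(1-\rho)^{z_2}$, are polynomials in $z_2$ of degree at most $n-1$. On the left, this is immediate from the definition of $G_{0,n}$ together with the fact, proven while establishing Theorem~\ref{hnkdef}, that $(1-\rho)^{z_2} h^n_k(0,z_2)$ is a polynomial of degree at most $k$. On the right, I would expand the expectation as $\sum_{j=0}^{n-1}\sum_{y}\mathbb{P}_{B_0=z_1}(\tau=j,\, B_\tau = y)\,\overline{Q}^{(n-j)}(y,z_2)$; since $B$ strictly decreases, for each $j$ the inner sum ranges over the finite set $y \in \{X_0(j+1)+1,\dots,z_1-j\}$, and by construction $(1-\rho)^{z_2}\overline{Q}^{(n-j)}(y,z_2)$ is a polynomial in $z_2$ of degree $n-j-1 \leq n-1$.

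Next I would verify the identity on $\{z_2 \leq X_0(n)\}$. On this range,~\eqref{probabilityinterpretforG} already gives $G_{0,n}(z_1,z_2)=\mathbb{P}_{B^*_{-1}=z_2}(\tau^{0,n}<n,\, B^*_{n-1}=z_1)$. Moreover, whenever $\tau < n$ one has $B_\tau > X_0(\tau+1)\geq X_0(n) \geq z_2$, so $\overline{Q}^{(n-\tau)}(B_\tau,z_2) = Q^{n-\tau}(B_\tau,z_2)$, and the right-hand side reduces to $\mathbb{P}_{B_0=z_1}(\tau<n,\, B_n = z_2)$. The heart of the argument is then a path-reversal: setting $\tilde{B}_m := B^*_{n-1-m}$ for $m=0,\dots,n$, the increments of $\tilde{B}$ are $-\mathrm{Geom}(\rho)$, so $\tilde{B}$ has the law of $B$ started at $z_1$, and each $n$-step trajectory of $B^*$ from $z_2$ to $z_1$ is in probability-preserving bijection with an $n$-step trajectory of $\tilde{B}$ from $z_1$ to $z_2$. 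Under this bijection the constraint $B^*_k > X_0(n-k)$ with $k \in \{0,\dots,n-1\}$ becomes $\tilde{B}_m > X_0(m+1)$ with $m = n-1-k \in \{0,\dots,n-1\}$, so the two ``hit before $n$'' events describe the same set of paths. Once these are identified, both sides agree on $\{z_2 \leq X_0(n)\}$, and being polynomials of bounded degree they must coincide for every $z_2 \in \mathbb{Z}$.

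The main subtlety to handle carefully is that the first-hitting times $\tau^{0,n}$ and $\tau$ themselves do \emph{not} correspond under the reversal---the first hit of $B^*$ becomes the last hit of $\tilde{B}$---but only the \emph{event} of a hit within the first $n$ steps enters the probabilities of interest, and this event is invariant under reversal; this is the single delicate point in the argument.
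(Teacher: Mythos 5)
Your proposal is correct, and it is exactly the argument that the paper's framing is set up to support (the paper states this lemma without proof, presumably because it is a direct density-$\rho$ analogue of the corresponding statement in~\cite{MQR2021}; your proof realizes the extension implicitly promised by the remark immediately preceding the lemma, namely ``Now we want to extend the expression~\eqref{probabilityinterpretforG} to all $z_2\in\mathbb{Z}$''). The two ingredients you use are precisely those the paper has already prepared: the polynomial structure of $(1-\rho)^{z}h^n_k(0,z)$ established in the proof of Theorem~\ref{hnkdef}, and the probabilistic identity~\eqref{probabilityinterpretforG}. Both checks go through. On the polynomial side, $(1-\rho)^{z_2}\overline Q^{(n-j)}(y,z_2)=\binom{y-z_2-1}{n-j-1}\rho^{n-j}(1-\rho)^{y-n+j}$ has degree $n-j-1$ in $z_2$, and since $B$ strictly decreases the double sum over $(j,y)$ with $j\le n-1$ and $y\in(X_0(j+1),z_1-j]$ is indeed finite, so the right-hand side is a well-defined polynomial of degree at most $n-1$ after conjugation. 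On the probabilistic side, for $z_2\le X_0(n)$ the replacement $\overline Q^{(n-\tau)}\to Q^{n-\tau}$ is licit because $B_\tau>X_0(\tau+1)\ge X_0(n)\ge z_2$, and the time reversal $\tilde B_m=B^*_{n-1-m}$ converts Geom$(\rho)$ increments to the right into Geom$(\rho)$ increments to the left and sends the constraint $B^*_k>X_0(n-k)$, $k\in\{0,\dots,n-1\}$, to $\tilde B_m>X_0(m+1)$, $m=n-1-k\in\{0,\dots,n-1\}$, bijectively and measure-preservingly. You are also right to flag the delicate point: $\tau^{0,n}$ and $\tau$ do not themselves correspond under reversal (first hit versus last hit), but only the event $\{\text{some hit in the first }n\text{ steps}\}$ enters, and that event is reversal-invariant. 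Nothing is missing.
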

Using the probabilistic meaning of the operator $G_{0,n}, Q^*$ and $h^k_n(0,z)$, we have the following splitting formula for $h^n_k(0,z)$, which is not present in \cite{MQR2021}.
\begin{lemma}\label{splittinglemma}
Assume $n = N_1+N_2$ for some positive integers $N_1,N_2$. For $0 \leq k\leq N_1$, we have
\begin{equation}
        h_{k+N_2}^{N_1+N_2}(0,z)=\sum_{z_3}(Q^{N_2}(z_3,z)-G_{0,N_2}(z_3,z))h_k^{N_1}(0,z_3).
\end{equation}
\end{lemma}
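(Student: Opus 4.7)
The plan is to derive the identity from the probabilistic interpretation $h^n_k(0,z) = \mathbb{P}_{B^*_{-1}=z}(\tau^{0,n}=k)$ via the Markov property of the $\mathrm{Geom}(\rho)$ right-jumping walk $B^*$ at time $N_2-1$, and then to extend it to all $z\in\mathbb{Z}$ by a polynomial continuation argument.

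First I would fix $z \leq X_0(N_1+N_2)$ so the probabilistic interpretation applies, and write $h^{N_1+N_2}_{k+N_2}(0,z) = \mathbb{P}_{B^*_{-1}=z}(\tau^{0,N_1+N_2} = k+N_2)$. Since $0\le k \le N_1$, the crossing time $k+N_2$ lies in the interval $\{N_2,\ldots,N_1+N_2-1\}$, so the event $\{\tau^{0,N_1+N_2}=k+N_2\}$ splits as the intersection of a Phase 1 event (the walk stays below the curve $X_0(N_1+N_2-m)$ for $m=0,\ldots,N_2-1$) and a Phase 2 event (the walk first crosses the curve at time $N_2+k$ during $m=N_2,\ldots,N_1+N_2-1$). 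Conditioning on $B^*_{N_2-1}=z_3$ and using the Markov property at $m=N_2-1$, the probability factorizes as a sum over $z_3$ of a Phase 1 transition times a Phase 2 first-passage probability.

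The Phase 2 factor, after the time shift $\tilde B^*_m:=B^*_{N_2+m}$, is exactly $\mathbb{P}_{\tilde B^*_{-1}=z_3}(\tilde\tau^{0,N_1}=k)$ against the curve $X_0(N_1-m)$, i.e.\ $h^{N_1}_k(0,z_3)$ with initial data $X_0(1),\ldots,X_0(N_1)$. The Phase 1 factor is the non-crossing transition from $z$ to $z_3$ against the curve $\tilde X_0(N_2-m)=X_0(N_1+N_2-m)$. Writing this as the unrestricted $N_2$-step transition minus the crossing probability, the unrestricted piece is $(Q^*)^{N_2}(z,z_3)=Q^{N_2}(z_3,z)$ (using $Q^*(x,y)=Q(y,x)$), and the crossing piece is $G_{0,N_2}(z_3,z)$ in the sense of the definition of $G$ applied to the shifted initial data $\tilde X_0(j)=X_0(N_1+j)$. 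Summing over $z_3$ proves the identity for $z\le X_0(N_1+N_2)$.

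To extend to all $z\in\mathbb{Z}$, I would use the polynomial structure established in the proof of Theorem \ref{hnkdef}: $(1-\rho)^z h^{N_1+N_2}_{k+N_2}(0,z)$ is a polynomial in $z$ of degree at most $k+N_2$. The right-hand side, after extending $Q^{N_2}$ to its polynomial prolongation $\overline Q^{(N_2)}$ and invoking the $\overline Q$-based representation of $G_{0,N_2}$ recorded just before the lemma, is likewise polynomial in $z$ under the same conjugation; agreement on the infinite set $\{z\le X_0(N_1+N_2)\}$ then forces agreement on all of $\mathbb{Z}$. The hardest part is bookkeeping rather than analysis: one must track carefully which block of particles (the low-index block for the Phase 2 factor $h^{N_1}_k$, the high-index block $X_0(N_1+1),\ldots,X_0(N_1+N_2)$ for the Phase 1 factor $G_{0,N_2}$) each object refers to, since the notation $h^n_k$ and $G_{0,n}$ suppresses this dependence. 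Once the curve-labelling is correctly assigned, the Markov decomposition in Steps 2–3 is essentially automatic.
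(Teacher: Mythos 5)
Your first half — deriving the identity for $z\le X_0(N_1+N_2)$ by the Markov property of $B^*$ at time $N_2-1$, splitting the event $\{\tau^{0,N_1+N_2}=k+N_2\}$ into a no--hit Phase~1 and a first--passage Phase~2 against the shifted curve — is exactly the paper's argument.

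The extension to all $z$ is where your proposal has a genuine gap. You propose to pass to the polynomial prolongation by replacing $Q^{N_2}$ with $\overline Q^{(N_2)}$ and then invoke the $\overline Q$--representation of $G_{0,N_2}$. Taken literally this does not work: for $z_3\le X_0(N_1+1)$ one has $G_{0,N_2}(z_3,z)\ne\overline Q^{(N_2)}(z_3,z)$, and $\overline Q^{(N_2)}(z_3,z)=(1-\rho)^{z_3-z-N_2}\rho^{N_2}\binom{z_3-z-1}{N_2-1}$ blows up like $(1-\rho)^{z_3}$ as $z_3\to-\infty$ while $h^{N_1}_k(0,z_3)$ stays bounded there, so the modified sum $\sum_{z_3}\bigl(\overline Q^{(N_2)}(z_3,z)-G_{0,N_2}(z_3,z)\bigr)h^{N_1}_k(0,z_3)$ diverges. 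What you actually need is that the \emph{original} sum, with $Q^{N_2}$, becomes a polynomial in $z$ after multiplying by $(1-\rho)^z$. That is true, but it is not immediate: for $z>X_0(N_1+N_2)$ the summand vanishes for $z_3\ge z+N_2$ (since there $Q^{N_2}=\overline Q^{(N_2)}=G_{0,N_2}$), so the sum splits into an absolutely convergent piece over $z_3\le X_0(N_1+1)$ (polynomial because $(1-\rho)^zG_{0,N_2}(z_3,z)$ is, with geometrically decaying coefficients) plus a $z$--dependent--length piece over $X_0(N_1+1)<z_3<z+N_2$, where one must observe both that the exponential weights in $(1-\rho)^z\overline Q^{(N_2)}(z_3,z)h^{N_1}_k(0,z_3)$ cancel and that the resulting Faulhaber--type sum is polynomial in $z$. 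None of that is in your sketch, so the ``agreement on an infinite set forces agreement everywhere'' step is not yet justified. The paper avoids these convergence and Faulhaber issues entirely by instead proving the case $z>X_0(N_1+N_2)$ through an induction on $N_2$, repeatedly applying the explicit two--sided recursion
\begin{equation*}
h^n_k(l-1,z)=\sum_{y=X_0(n-l+1)+1}^{z}\!\!\bigl(-h^n_k(l,y)\bigr)\overline Q(y,z)
\quad\text{or}\quad
h^n_k(l-1,z)=\sum_{y=z+1}^{X_0(n-l+1)}\!\!h^n_k(l,y)\,Q(y,z)
\end{equation*}
to unwind $h^{N_1+N_2}_{k+N_2}(0,z)$ down to $h^{N_1+N_2}_{k+N_2}(N_2,\cdot)=h^{N_1}_k(0,\cdot)$, reassembling $Q^{N_2}-G_{0,N_2}$ along the way. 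If you want to pursue the polynomial--continuation route you would need to supply the convergence/polynomiality verification sketched above; otherwise the induction is the cleaner path.
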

\begin{proof}
For $z\leq X_0(N_1+N_2)$, using the probabilistic meaning of the function,
\begin{equation}
    \begin{split}
        &h_{k+N_2}^{N_1+N_2}(0,z)=\mathbb{P}_{B^*(-1)=z}(\tau^{0,N_1+N_2}=k+N_2)\\
            &=\sum_{z_3\leq X_0(N_1)}\mathbb{P}_{B^*(-1)=z}(\text{no hit in } N_2 \text{ steps}, \text{ end at } z_3)\mathbb{P}_{B*(N_1+1)=z_3}(\tau^{N_2,N_1+N_2}=k+N_2)\\
            &=\sum_{z_3\leq X_0(N_1)}(\mathbb{P}_{B^*(-1)=z}(\text{end at }z_3)-\mathbb{P}_{B^*(-1)=z}(\text{hit in first } N_2 \text{ steps, end at }z_3))h_k^{N_1}(0,z_3)\\
            &=\sum_{z_3\leq X_0(N_1)}((Q^*)^{N_2}(z,z_3)-G_{0,N_2}(z_3,z))h_k^{N_1}(0,z_3).\\
          \end{split}
\end{equation}
The sum is over $z_3 \leq X_0(N_2)$ because we want $h^{N_1}_k(0,z_3)= \mathbb{P}_{B*(N_2-1)=z_3}(\tau^{N_2,N_1+N_2}=k+N_2) $. But we can remove this constraint since when $z_3 > X_0(N_2)$, $(Q^*)^{N_2}(z,z_3)-G_{0,N_2}(z_3,z)=0$.

Now we want to prove this equality when $z >X_0(N_1+N_2)$. First, notice that by definition, $h^{N_1}_k(0,z) = h^{N_1+N_2}_{k+N_2}(N_2,z)$, since they solve the equations in (\ref{hnkdef}) with the same boundary condition. Recall from the proof of (\ref{hnkdef}) that for $z\geq X_0(n-l+1)$,
\begin{equation}
h^n_k(l-1,z) = \sum_{y=X_0(n-l+1)+1}^z-h^n_k(l,y)\overline{Q}(y,z),
\end{equation}and for $z<X_0(n-l+1)$, 
\begin{equation}
h^n_k(l-1,z) = \sum_{y=z+1}^{X_0(n-l+1)}h^n_k(l,y)Q(y,z).
\end{equation} We will keep using these two formulas to solve $h^{N_1+N_2}_{k+N_2}(0,z)$ from $h^{N_1+N_2}_{k+N_2}(N_2,z)$. Now we prove by induction.
Assume $N_2 =1$,
\begin{equation}
\begin{split}
h^{N_1+1}_{k+1}(0,z)= \sum_{y=X_0(n)+1}^z-h^{N_1+1}_{k+1}(1,y)\overline{Q}(y,z).
\end{split}
\end{equation}
Notice that when $z\geq y\geq X_0(n)+1$, we have $\overline{Q}(y,z) = G_{0,1}(y,z),Q(y,z)=0$, so we can write
\begin{equation}
 h^{N_1+1}_{k+1}(0,z)= \sum_{y=X_0(n)+1}^zh^{N_1+1}_{k+1}(1,y)(Q(y,z)-G_{0,1}(y,z)).
 \end{equation} 
 Then notice that $y>z$, $Q(y,z)= G_{0,1}(y,z)$, and when $y\leq X_0(n)$, both $Q(y,z)$and $G_{0,1}(y,z)$ are 0, thus we can get rid of the constrain on $y$.
 Now assume 
 \begin{equation}
  h_{k+N_2}^{N_1+N_2}(0,z)=\sum_{z_3=X_0(n)+N_2}^{z}(Q^{N_2}(z_3,z)-G_{0,N_2}(z_3,z))h_{k+N_2}^{N_1+N_2}(N_2,z_3).
 \end{equation}
 We prove the same formula holds for $N_2$ replaced by $N_2+1$.\\
By the assumption,
\begin{equation}
h^{N_1+N_2+1}_{k+N_2+1}(0,z)= \sum_{z_3=X_0(n+1)+N_2}^{z}h_{k+N_2+1}^{N_1+N_2+1}(N_2,z_3)(Q^{N_2}(z_3,z)-G_{0,N_2}(z_3,z)).
\end{equation}
Depending on whether $z_3> X_0(N_1+1)$ or $z_3\leq X_0(N_1+1)$, we have different formulas for $h_{k+N_2+1}^{N_1+N_2+1}(N_2,z_3)$ in terms of $h_{k+N_2+1}^{N_1+N_2+1}(N_2+1,z_3)$.
 If $z_3\leq X_0(N_1+1)$, 
 \begin{equation}
h_{k+N_2+1}^{N_1+N_2+1}(N_2,z_3) = \sum_{y=z_3+1}^{X_0(N_1+1)}h_{k+N_2+1}^{N_1+N_2+1}(N_2+1,y)Q(y,z_3).
 \end{equation}
 If $z_3\geq X_0(N_1+1)$, 
  \begin{equation}
 h_{k+N_2+1}^{N_1+N_2+1}(N_2,z_3) = -\sum_{y=X_0(N_1+1)+1}^{z_3}h_{k+N_2+1}^{N_1+N_2+1}(N_2+1,y)\overline{Q}(y,z_3).
  \end{equation}
  So,
  \begin{equation}
  \begin{split}
&\sum_{z_3=X_0(n+1)+N_2}^{X_0(N_1+1)}\sum_{y=z_3+1}^{X_0(N_1+1)}h_{k+N_2+1}^{N_1+N_2+1}(N_2+1,y)Q(y,z_3)(-G_{N_1+1,n+1}(z_3,z))\\
&=\sum_{y=X_0(n+1)+N_2+1}^{X_0(N_1+1)}\sum_{z_3=X_0(n+1)+N_2}^{X_0(N_1+1)\wedge (y-1)}h_{k+N_2+1}^{N_1+N_2+1}(N_2+1,y)Q(y,z_3)(-G_{N_1+1,n+1}(z_3,z))\\
&=\sum_{y=X_0(n+1)+N_2+1}^{X_0(N_1+1)}h_{k+N_2+1}^{N_1+N_2+1}(N_2+1,y)(-G_{N_1,n+1}(y,z)).
  \end{split}
  \end{equation}
 The last equality is true because of the range of $y$. If we start the walk below $X_0(N_1+1)$, then it cannot hit $X_0(N_1+1)$.
  Now look at the second part,
  \begin{equation}
  \begin{split}
 &\sum_{z_3=X_0(N_1+1)+1}^z \sum_{y=X_0(N_1+1)+1}^{z_3}-h_{k+N_2+1}^{N_1+N_2+1}(N_2+1,y)\overline{Q}(y,z_3)(-G_{N_1+1,n+1}(z_3,z))\\
 &=\sum_{y=X_0(N_1+1)+1}^{z}\sum_{z_3=y}^z h_{k+N_2+1}^{N_1+N_2+1}(N_2+1,y)(-\overline{Q}(y,z_3))(-G_{N_1+1,n+1}(z_3,z))\\
 &=\sum_{y=X_0(N_1+1)+1}^{z}h_{k+N_2+1}^{N_1+N_2+1}(N_2+1,y)(-G_{N_1,n+1}(y,z))\\
  \end{split}
  \end{equation}
  This is because combining these two equations, we get 
\begin{equation}
    \begin{split}
        &h^{N_1+N_2+1}_{k+N_2+1}(0,z) = \sum_{y=X_0(n+1)+N_2+1}^z h_{k+N_2+1}^{N_1+N_2+1}(N_2+1,y)(-G_{N_1,n+1}(y,z))
    \end{split}
\end{equation}
Notice $Q^{N_2+1}(y,z) = 0$ for $y\leq z+N_2$, so we can add the $Q^{N_2+1}$ term:
\begin{equation}
    h^{N_1+N_2+1}_{k+N_2+1}(0,z) = \sum_{y=X_0(n+1)+N_2+1}^z h_{k+N_2+1}^{N_1+N_2+1}(N_2+1,y)(Q^{N_2+1}(y,z)-G_{N_1,n+1}(y,z))
\end{equation}
Then the summation can be extended for all $y$, since when $y< X_0(n+1)+N_2+1$, both $Q^{N_2+1}(y,z), G_{N_1,n+1}(y,z)$ are 0, and when $y>z, Q^{N_2+1}(y,z)= G_{N_1,n+1}(y,z)$
\end{proof}
For the further purpose of taking scaling limit, we want to define the following two operators, it is just a rename of combination of terms in equation (\ref{kernelGform}):
\begin{equation}\label{formulaSandSbar}
\begin{split}
S_{-t,-n}(z_1,z_2) &=  (e^{-(1-\rho)t\nabla^-}Q^{-n})(z_1,z_2) = \frac{1}{2\pi i} \int_{\Gamma_0}dw(\frac{1-\rho}{\rho})^{n}\frac{(1-\rho)^{z_2-z_1}(1-w)^{n}}{w^{n+1+z_2-z_1}}e^{t(w-(1-\rho))}\\
\overline{S}_{-t,n}(z_1,z_2) &= (\overline{Q}^{(n)}e^{(1-\rho)t\nabla^-})(z_1,z_2)= \frac{1}{2\pi i}\int_{\Gamma_0}dw (\frac{\rho}{1-\rho})^{n-1}\frac{(1-w)^{z_2-z_1+n-1}}{(1-\rho)^{z_2-z_1}w^n}e^{t(w-\rho)}.\\
\end{split}
\end{equation}
$\Gamma_0$ is a simple counterclockwise loop around $0$ that does not enclose $1$. Now we define 
\begin{equation}
\overline{S}^{\mathrm{epi}(X_0,m)}_{-t,n}(z_1,z_2) = \mathbb{E}_{B_0=z_1}[\overline{S}_{-t,n-\tau}(B_\tau,z_2)1_{\tau<m}].
\end{equation}
Here $m<n$ and if $m=n$ we will omit the superscript $m$.
Now we see that our TASEP kernel can be written as:
\begin{equation}
 \sum_{k=0}^{n-1}\Psi^n_k(x)\Phi^n_k(y) = (S^*_{-t,-n}\overline{S}^{\mathrm{epi}(X_0)}_{-t,n})(x,y)
\end{equation}

\section{Splitting of the kernel and asymptotic independence}
We start with the following initial condition: Bernoulli $\rho_-$ on the left of the origin and Bernoulli $\rho_+$ on the right of the origin.  Furthermore, we assume that there are $n_r$ particles on the right side of the origin.  So we have $$X_0(n_r+1)<0\leq X_0(n_r)<X_0(n_r-1)<\cdots<X_0(1).$$  $n_r $ will be large enough so that it does not affect the fluctuation around the shock. For any particle $X_0(n)$ on the left of the origin, we always write $n = n_l+n_r$ for some $n_l>0$. Let us also introduce some extra notation. Let $X^l_0(1) = X_0(n_r+1), X^l_0(2)= X_0(n_r+2),\cdots$, which should be thought of as a new initial condition which only has the left piece; similarly the right piece is: $X^r_0(1) = X_0(1),\cdots, X^r_0(n_r) = X_0(n_r)$. Denote the initial condition as $\eta_0(x)$, and its corresponding height function as $h_0(x)$ with $h_0(0) = 0$.

By Theorem \eqref{TASEPformula},
\begin{equation}
\mathbb{P}(X_t(n)>a)=\int \mu(X_0) \det(I-\overline{1}^a K_{X_0} \overline{1}^a)_{l^2(\mathbb{Z})}.
\end{equation}
where
\begin{equation}
K_{{X_0}}(x_1,x_2) = \sum_{k=0}^{n-1}\Psi^n_k(x_1)\Phi^n_k(x_2) = (\sum_{k=0}^{n_l-1}+\sum_{k=n_l}^n)\Psi^n_k(x_1)\Phi^n_k(x_2):= K^l(x_1,x_2)+K^r(x_1,x_2).
\end{equation}

Define the first summation to be $K^l$, and the second to be $K^r$, which encodes the information from the left and right pieces. From the bi-orthogonalization relation \eqref{eqn:biorthogonalizationrelation}, 
\begin{equation}
K^rK^l = 0.
\end{equation}
Thus we can write the determinant as:
\begin{equation}
\begin{split}
    \mathbb{P}(X_t(n)> a)& = \mathbb{E}_{X_0}\big[\det(I-\overline{1}^a(K^l+K^r)\overline{1}^a)_{l^2(\mathbb{Z})}\big]\\
    &= \mathbb{E}_{X_0}\big[\det((I-\overline{1}^aK^r\overline{1}^a)(I-\overline{1}^aK^l\overline{1}^a)+\overline{1}^aK^r 1_aK^l\overline{1}^a)_{l^2(\mathbb{Z})}\big].
\end{split}
\end{equation}
    We want to consider the following scaling limit:  
    \begin{equation}\label{scaling}
    \begin{split}
    &n_\varepsilon = \rho_+\rho_-(\varepsilon^{-3/2}\bt+\varepsilon^{-1}\bx)+\frac{\rho_-N_r+\rho_+N_l}{\rho_+-\rho_-}+n_{r,\varepsilon}\\
    &a_\varepsilon = (1-\rho_--\rho_+)(\varepsilon^{-3/2}\bt+\varepsilon^{-1}\bx)-\frac{N_r+N_l}{\rho_+-\rho_-}-\bs\varepsilon^{-1/2},\quad t_\varepsilon = (\varepsilon^{-3/2}\bt+\varepsilon^{-1}\bx).
    \end{split}
    \end{equation}
 We define the multiplicative operator, $$M_{\rho} f(x)  = \rho^xf(x).$$Under this scaling, we have the following theorem.
    \begin{theorem}\label{abconvergence}
    \begin{equation}
    \begin{split}
        &\lim_{\varepsilon\to 0}\mathbb{E}_{X_0}\det(I-\overline{1}^{a_\varepsilon}M_{1-\rho_-}K_\varepsilon^lM_{1/(1-\rho_-)}\overline{1}^{a_\varepsilon})_{l^2(\mathbb{Z})} = \mathbb{E}_{\mathfrak{h}^l}F^{\mathfrak{h}^l}(s),\\
        &\lim_{\varepsilon\to 0}\mathbb{E}_{X_0}\det(I-\overline{1}^{a_\varepsilon}M_{1-\rho_+}K_\varepsilon^rM_{1/(1-\rho_+)}\overline{1}^{a_\varepsilon})_{l^2(\mathbb{Z})} = \mathbb{E}_{\mathfrak{h}^r}F^{\mathfrak{h}^r}(s).\\
    \end{split}
    \end{equation}
    where $\mathfrak{h}^l,\mathfrak{h}^r$ are independent two-sided Brownian motions. $F^{\mathfrak{h}}$ is a distribution function indexed by functions in support of the Wiener measure. Its exact form will be given at the end of Section \eqref{subsec:convergencekl},\eqref{subsec:convergencekr}.\end{theorem}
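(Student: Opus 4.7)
The two convergences are symmetric, so the plan is to focus on the left factor. The key structural observation is that $K^l_\varepsilon$ depends only on $X^l_0$: for $k<n_l$, the integrand defining $\Psi^n_k$ involves only the single particle $X_0(n-k)=X^l_0(n_l-k)$, and the boundary-value problem of Theorem \ref{hnkdef} defining $h^n_k(l,\cdot)$ for $l\leq k<n_l$ only references $X_0(n-l)=X^l_0(n_l-l)$. Consequently $K^l_\varepsilon$ coincides with the TASEP one-point kernel for the $n_l$-particle system started from $X^l_0$ alone, and the formulas \eqref{kernelGform}--\eqref{formulaSandSbar}, together with the conjugation by $M_{1-\rho_-}$, give the factorization
\begin{equation*}
M_{1-\rho_-}K^l_\varepsilon M_{1/(1-\rho_-)} = S^*_{-t_\varepsilon,-n_l}\,\overline{S}^{\mathrm{epi}(X^l_0)}_{-t_\varepsilon,n_l}
\end{equation*}
with all constants tuned to density $\rho_-$. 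This reduces the problem to an asymptotic analysis of a density-$\rho_-$ kernel at the critical point $w=\rho_-$.

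Next, I would perform steep descent on the contour integrals in \eqref{formulaSandSbar} around $w=\rho_-$ under the scaling \eqref{scaling}. On the spatial scale $\varepsilon^{-1/2}$, the deterministic kernels $S^*_{-t_\varepsilon,-n_l}$ and $\overline{S}_{-t_\varepsilon,n_l-m}$ converge to the Airy-type operators associated with the characteristic emanating near $x=-(\rho_+-\rho_-)\rho_-\varepsilon^{-3/2}\bt$. For the hitting-time piece $\overline{S}^{\mathrm{epi}(X^l_0)}_{-t_\varepsilon,n_l}$, the geometric random walk $B$ reaches the Airy regime after $O(\varepsilon^{-1})$ steps, so only the portion of the staircase $(X^l_0(n_l-m))_m$ inside an $O(\varepsilon^{-1})$-window around that characteristic starting point is relevant. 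Inside the window, the centered Bernoulli$(\rho_-)$ initial condition converges by Donsker's invariance principle to a two-sided Brownian motion $\mathfrak{h}^l$ with variance $\chi_-$, while the macroscopic drift of the staircase is precisely the counting variable $N_l$ of \eqref{eqn:NlNrdef}, which has already been absorbed into the definition of $a_\varepsilon$ in \eqref{scaling}. In the limit the hitting event $\tau<n_l$ therefore becomes the event that a limiting Brownian motion hits the random boundary $\mathfrak{h}^l$, and the Fredholm determinant converges to a functional $F^{\mathfrak{h}^l}(s)$ of the sampled path.

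The main obstacle is upgrading pointwise kernel convergence to trace-norm convergence of Fredholm determinants uniformly in the randomness of $X^l_0$, so that the $X_0$-expectation commutes with the $\varepsilon\to 0$ limit. I would prove trace-norm bounds on $\overline{1}^{a_\varepsilon}M_{1-\rho_-}K^l_\varepsilon M_{1/(1-\rho_-)}\overline{1}^{a_\varepsilon}$ that are uniform in $X^l_0$ modulo an integrable envelope, along the lines of the bounds gathered in the appendix. Given such bounds, a Skorohod coupling turns the weak convergence of $X^l_0$ (viewed as a random element of path space) to $\mathfrak{h}^l$ into almost-sure convergence, dominated convergence applies, and one obtains
\begin{equation*}
\lim_{\varepsilon\to 0}\mathbb{E}_{X_0}\det\bigl(I-\overline{1}^{a_\varepsilon}M_{1-\rho_-}K^l_\varepsilon M_{1/(1-\rho_-)}\overline{1}^{a_\varepsilon}\bigr)_{l^2(\mathbb{Z})}=\mathbb{E}_{\mathfrak{h}^l}F^{\mathfrak{h}^l}(s).
\end{equation*}
The explicit form of $F^{\mathfrak{h}^l}$ is the Fredholm determinant of a limiting Airy hitting kernel with Brownian boundary $\mathfrak{h}^l$, to be identified in Section 5. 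The argument for the right factor is identical at the critical point $w=\rho_+$, and the independence of $\mathfrak{h}^l$ and $\mathfrak{h}^r$ follows because the two Brownian limits arise from disjoint windows of the original Bernoulli initial data.
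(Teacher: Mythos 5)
Your factorization $M_{1-\rho_-}K^l_\varepsilon M_{1/(1-\rho_-)}=S^*_{-t_\varepsilon,-n_l}\,\overline{S}^{\mathrm{epi}(X^l_0)}_{-t_\varepsilon,n_l}$ is correct, and the observation that $K^l_\varepsilon$ depends only on $X^l_0$ is the right starting point. However, the next step — "perform steep descent on the contour integrals in \eqref{formulaSandSbar} around $w=\rho_-$ under the scaling \eqref{scaling}" — does not work, and this is the central obstacle that the paper's proof is actually designed to overcome. Under the scaling \eqref{scaling} we have $n_l=\rho_-\rho_+\varepsilon^{-3/2}\bt+O(\varepsilon^{-1})$, whereas the double critical point of $S_{-t,-n}$ at $w=1-\rho_-$ (and of $\overline{S}_{-t,n}$ at $w=\rho_-$; you misstate which factor has which critical point) only emerges when $n=\rho_-^2 t+O(\varepsilon^{-1})$. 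With $n=n_l$ and $t=t_\varepsilon$ the derivative of the phase function at $w=1-\rho_-$ is of order $(\rho_--\rho_+)/(1-\rho_-)\ne 0$, so there is no degenerate saddle and the kernel does not converge to an Airy-type operator in the way you describe.

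The paper's actual route is forced by this. It splits $K^l=\sum_{k=0}^{n_{ll}-1}+\sum_{k=n_{ll}}^{n_l-1}$, where $n_{ll}=\rho_-^2\varepsilon^{-3/2}\bt+\rho_-\rho_+\varepsilon^{-1}\bx-\bs\varepsilon^{-1/2}$ is the correct Airy exponent for density $\rho_-$. The first block is a genuine $n_{ll}$-particle kernel on the far-left piece $X^{ll}_0$ and converges by Proposition \ref{mainconvergencetheoremleft}. The second block cannot be handled directly; the paper introduces Lemma \ref{splittinglemma} (which is explicitly flagged as new and not present in \cite{MQR2021}) to rewrite $h^{n_l}_{n_{ll}+k}(0,\cdot)$ in terms of $h^{n_{lr}}_k(0,\cdot)$ composed with $Q^{n_{ll}}-G_{0,n_{ll}}$, after which Propositions \ref{rightpiececonvergence}, \ref{prop:qqbarsameinlimit}, and \ref{mainconvergencetheoremleft} give convergence of each piece. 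The resulting limiting operator is the three-term kernel $\bK^{\mathfrak{h}^l_0}_{\tilde{\bx}_-}$ at the end of Section \ref{subsec:convergencekl} — with \emph{two} independent Brownian boundary pieces $\mathfrak{h}^{l,-}_0$ and $\mathfrak{h}^{l,+}_0$ sampling $X^{ll}_0$ and $X^{lr}_0$ respectively — not a single Airy hitting kernel with one Brownian boundary as you describe. So the missing idea is precisely the further split at $n_{ll}$ together with the new splitting lemma; without it the steepest-descent step in your argument fails.
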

   We will prove the theorem in the Appendix. Together with the proof of Theorem (\ref{abconvergence}), we also prove the uniform bound on the trace norm of the kernel $\overline{1}^{a_\varepsilon}M_{1-\rho_-}K_\varepsilon^lM_{1/(1-\rho_-)}\overline{1}^{a_\varepsilon}$ and $\overline{1}^{a_\varepsilon}M_{1-\rho_+}K_\varepsilon^rM_{1/(1-\rho_+)}\overline{1}^{a_\varepsilon}$. Then we can prove the next theorem, which shows that the two pieces of the initial configuration are asymptotically independent. We drop $\varepsilon$ super/sub scripts in the following theorem and proof since there will be no confusion that we are considering the scaling limit.
   \begin{theorem}
           \begin{multline}
               \lim_{\varepsilon \to 0}\mathbb{E}_{X_0}\det\big((I-\overline{1}^aK^r\overline{1}^a)(I-\overline{1}^aK^l\overline{1}^a)+\overline{1}^aK^r1_aK^l\overline{1}^a\big) \\= \lim_{\varepsilon \to 0}\mathbb{E}_{X_0}\det\big((I-\overline{1}^aK^r\overline{1}^a)(I-\overline{1}^aK^l\overline{1}^a)\big).
           \end{multline}
   \end{theorem}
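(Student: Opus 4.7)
The plan is to apply the standard Lipschitz estimate for Fredholm determinants,
\[
|\det(I+X) - \det(I+Y)| \leq \|X-Y\|_1 \exp(\|X\|_1+\|Y\|_1+1),
\]
and show that the ``cross term'' differentiating the two operators inside the two determinants vanishes in trace norm, then pass to expectation via dominated convergence using the uniform trace-norm bounds provided with Theorem~\ref{abconvergence}.

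\textbf{Algebraic reduction.} The biorthogonality $K^rK^l=0$ combined with $I=\overline{1}^a+1_a$ yields
\[
\overline{1}^a K^r \overline{1}^a K^l \overline{1}^a + \overline{1}^a K^r 1_a K^l \overline{1}^a \;=\; \overline{1}^a K^r K^l \overline{1}^a \;=\; 0,
\]
so $\overline{1}^a K^r 1_a K^l \overline{1}^a = -\overline{1}^a K^r \overline{1}^a K^l \overline{1}^a$. Expanding $(I-\overline{1}^a K^r\overline{1}^a)(I-\overline{1}^a K^l\overline{1}^a)$ and adding $\overline{1}^a K^r 1_a K^l \overline{1}^a$ collapses the LHS operator to $I-\overline{1}^a(K^r+K^l)\overline{1}^a$. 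Thus the operators inside the two determinants differ by exactly $\overline{1}^a K^r 1_a K^l \overline{1}^a$ (up to sign). The trace norms of the two ``diagonal'' operators are uniformly bounded by the estimates proven with Theorem~\ref{abconvergence}, so the Lipschitz inequality reduces the proof to showing $\|\overline{1}^a K^r 1_a K^l \overline{1}^a\|_1 \to 0$ pointwise in $X_0$, after which dominated convergence passes the limit into the expectation.

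\textbf{Estimating the cross term.} Bound the trace norm by a product of Hilbert--Schmidt norms,
\[
\|\overline{1}^a K^r 1_a K^l \overline{1}^a\|_1 \le \|\overline{1}^a K^r 1_a\|_2 \cdot \|1_a K^l \overline{1}^a\|_2.
\]
Writing $K^r(x,y)=(1-\rho_+)^{y-x}\tilde K^r(x,y)$ with $\tilde K^r=M_{1-\rho_+}K^rM_{1-\rho_+}^{-1}$, for $x\le a<y$ the prefactor $(1-\rho_+)^{y-x}$ is exponentially small in $(a-x)+(y-a)$. Combined with the pointwise asymptotics of $\tilde K^r$ in the scaling window (it converges to an Airy-type kernel, and is at worst polynomially bounded in the scaled variables), this gives $\|\overline{1}^a K^r 1_a\|_2 = O(e^{-c\varepsilon^{-1/2}})$ for some $c>0$. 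The companion factor $\|1_a K^l\overline{1}^a\|_2$, using $K^l(x,y)=(1-\rho_-)^{y-x}\tilde K^l(x,y)$, carries a weight that is exponentially \emph{growing} for $x>a\ge y$; however one only pays this growth in a window of size $O(\varepsilon^{-1/2})$ around $a$, and this polynomial-in-$\varepsilon^{-1}$ growth is more than absorbed by the super-exponential Airy-tail decay of $\tilde K^l$ at its positive argument. The product of the two HS norms therefore tends to zero, proving the claim pointwise in $X_0$.

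\textbf{Main obstacle.} The principal difficulty is the third paragraph: the two kernels require \emph{opposite} conjugations ($M_{1-\rho_+}$ for $K^r$, $M_{1-\rho_-}$ for $K^l$) to be trace class, so any single common conjugation sends one of them to exponentially growing size on the cross-region. Balancing this mismatch through the fine tail behavior of $\tilde K^{r,l}$ in the scaling limit - i.e., the super-exponential Airy decay dominating the exponential weight over the relevant $O(\varepsilon^{-1/2})$ window - is the technical heart of the argument, and is where the trace-norm bounds developed alongside Theorem~\ref{abconvergence} must be leveraged beyond merely bounding the diagonal blocks.
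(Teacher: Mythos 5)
The algebraic reduction in your first two paragraphs is correct and matches the paper's starting point: the LHS operator collapses to $I-\overline{1}^a(K^r+K^l)\overline{1}^a$, and the difference between the two determinants' operators is exactly the cross term $E=\overline{1}^aK^r1_aK^l\overline{1}^a$. However, from there your approach diverges from the paper's and contains genuine gaps.

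Your plan is to apply the Lipschitz estimate $|\det(I+X)-\det(I+Y)|\le\|X-Y\|_1 e^{1+\|X\|_1+\|Y\|_1}$ with $X=-\overline{1}^a(K^r+K^l)\overline{1}^a$ and $Y$ the other operator, then show $\|E\|_1\to 0$. But the exponential factor requires $\|X\|_1$ and $\|Y\|_1$ to be uniformly bounded in $\varepsilon$, and this is precisely what is unavailable: the only uniform trace-norm bounds provided alongside Theorem~\ref{abconvergence} are on $\overline{1}^aM_{1-\rho_-}K^lM_{1/(1-\rho_-)}\overline{1}^a$ and $\overline{1}^aM_{1-\rho_+}K^rM_{1/(1-\rho_+)}\overline{1}^a$, which involve two incompatible conjugations. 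No single conjugation makes $\overline{1}^a(K^r+K^l)\overline{1}^a$ uniformly trace class (you observe this yourself in your last paragraph), so the Lipschitz inequality cannot be applied in the form you state. The paper circumvents this by never invoking a trace bound on the sum: it factorizes $\det(Y+E)=\det(Y)\det(I+Y^{-1}E)$, so that $\det(Y)$ splits into two determinants that each tolerate their own conjugation, and the convergence question is reduced to $\det(I+Y^{-1}E)\to 1$.

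Your Hilbert--Schmidt bound on the cross term also does not hold up. You claim $\|\overline{1}^aK^r1_a\|_2=O(e^{-c\varepsilon^{-1/2}})$ because $(1-\rho_+)^{y-x}$ is ``exponentially small in $(a-x)+(y-a)$.'' But $(a-x)+(y-a)=y-x$ is only bounded below by $1$; there is no reason the sum over $x\le a<y$ gives anything smaller than $O(1)$. Moreover, the trace-norm control from Theorem~\ref{abconvergence} concerns $\overline{1}^a\mathcal{K}^{l,r}\overline{1}^a$ only, i.e.\ the region $x,y\le a$; the claimed ``super-exponential Airy-tail decay'' of $\tilde K^{l,r}$ on the half-line $y>a$ (or $x>a$) is not established anywhere in the paper and is not a consequence of those bounds. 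In fact the true mechanism is much softer: after conjugating $Y^{-1}E$ by $M_{1-\rho_-}$ and inserting an extra $\overline{1}^a$ between $M_{1-\rho_-}$ and $M_{1/(1-\rho_+)}$ (using that $(I-\overline{1}^a\mathcal{K}^r\overline{1}^a)^{-1}$ commutes with $\overline{1}^a$), the paper isolates the diagonal multiplication operator $M_{(1-\rho_-)/(1-\rho_+)}\overline{1}^a$, whose trace norm is an explicit geometric series over $(-\infty,a]$. The exponentially large factor $(\tfrac{1-\rho_-}{1-\rho_+})^a$ from this sum cancels \emph{exactly} against the exponentially small factor $(\tfrac{1-\rho_+}{1-\rho_-})^a$ produced by the operator norms of the two cross pieces, and what wins is the single residual factor $\varepsilon^{1/2}$ coming from the lattice spacing. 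The decay rate is thus $O(\varepsilon^{1/2})$, not exponential, and it comes from the projection window, not from tail decay of the rescaled kernels.

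To repair the argument you would need to (i) replace the Lipschitz bound by the factorization $\det(Y)\det(I+Y^{-1}E)$ so that no uniform trace bound on the sum is needed, and (ii) replace the heuristic HS estimate by the conjugation-and-insertion manipulation that produces $\|M_{(1-\rho_-)/(1-\rho_+)}\overline{1}^a\|_{\mathrm{tr}}$. At that point you would be reproducing the paper's argument.
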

   \begin{proof}
   Let 
   $Y = (I-\overline{1}^aK^r\overline{1}^a)\overline{1}^a(I-\overline{1}^aK^l\overline{1}^a),\quad E = \overline{1}^aK^r1_aK^l\overline{1}^a.$
   Then 
   \begin{multline}
       \lim_{\varepsilon \to 0}\mathbb{E}_{X_0}\det\big((I-\overline{1}^aK^r\overline{1}^a)(I-\overline{1}^aK^l\overline{1}^a)+\overline{1}^aK^r1_aK^l\overline{1}^a\big)\\=\lim_{\varepsilon \to 0}\mathbb{E}_{X_0}\big(\det(Y)\det(I+Y^{-1}E)\big).
   \end{multline}
   We try to estimate the trace norm of $Y^{-1}E$. Since we know that $\overline{1}^aM_{1-\rho_-}K^lM_{1/(1-\rho_-)}\overline{1}^a$ and $\overline{1}^aM_{1-\rho_+}K^rM_{1/(1-\rho_+)}\overline{1}^a$ are uniformly bounded in the trace norm, we want to write \[K^r= M_{1/(1-\rho_+)}\mathcal{K}^rM_{(1-\rho_+)},\quad K^l= M_{1/(1-\rho_-)}\mathcal{K}^lM_{(1-\rho_-)}\] where $\overline{1}^a\mathcal{K}^l\overline{1}^a, \overline{1}^a\mathcal{K}^r\overline{1}^a$ are uniformly bounded in the trace norm. Now we have
    \begin{equation}
       \begin{split}
            Y^{-1}E  &=  M_{1/(1-\rho_-)}(I-\overline{1}^a\mathcal{K}^l\overline{1}^a)^{-1}M_{(1-\rho_-)}M_{1/(1-\rho_+)}(I-\overline{1}^a\mathcal{K}^r\overline{1}^a)^{-1}M_{(1-\rho_+)}\\
           &\cdot   M_{1/(1-\rho_+)}\overline{1}^a\mathcal{K}^r M_{(1-\rho_+)}1_a M_{1/(1-\rho_-)}\mathcal{K}^l\overline{1}^a M_{(1-\rho_-)} \\
           \end{split}
           \end{equation}
    Conjugating the kernel by $M_{1/(1-\rho_-)}$ and noticing that the projection operator $\overline{1}^a$ commutes with the multiplication operator $M_\rho$, thus we have:
    \begin{equation}\label{eqn:65}
    \begin{split}
    &\lVert M_{1-\rho_-}Y^{-1}EM_{1/(1-\rho_-)} \rVert_{tr} = \lVert (I-\overline{1}^a\mathcal{K}^l\overline{1}^a)^{-1}M_{(1-\rho_-)}M_{1/(1-\rho_+)}(I-\overline{1}^a\mathcal{K}^r\overline{1}^a)^{-1}\\
           &\cdot   \overline{1}^a\mathcal{K}^r M_{(1-\rho_+)}1_a M_{1/(1-\rho_-)}\mathcal{K}^l\overline{1}^a \rVert_{tr}\\
    \end{split}
    \end{equation}
    Due to the fact that $(I-\overline{1}^a\mathcal{K}^r\overline{1}^a)^{-1}\overline{1}^a= \overline{1}^a(I-\overline{1}^a\mathcal{K}^r\overline{1}^a)^{-1}\overline{1}^a$,
    \begin{equation}\label{eqn:66}
        \eqref{eqn:65} = \lVert (I-\overline{1}^a\mathcal{K}^l\overline{1}^a)^{-1}M_{(1-\rho_-)}\overline{1}^aM_{1/(1-\rho_+)}(I-\overline{1}^a\mathcal{K}^r\overline{1}^a)^{-1}\cdot   \overline{1}^a\mathcal{K}^r 1_a M_{\tfrac{1-\rho_+}{1-\rho_-}}\mathcal{K}^l\overline{1}^a  \rVert_{tr}
    \end{equation}
    where the only change is that we add $\overline{1}^a$ between $M_{1-\rho_-}$ and $M_{1/(1-\rho_+)}$. The estimate the trace norm of \eqref{eqn:66} by
    \begin{equation}\label{eqn:67}
      \begin{split}
           \eqref{eqn:66} \leq \lVert(I-\overline{1}^a\mathcal{K}^l\overline{1}^a)^{-1} \rVert_{op}\lVert(I-\overline{1}^a\mathcal{K}^r\overline{1}^a)^{-1} \rVert_{op}\lVert M_{(1-\rho_-)/(1-\rho_+)}\overline{1}^a\rVert_{tr}\\
           \lVert    \overline{1}^a\mathcal{K}^r 1_a M_{\sqrt{\tfrac{1-\rho_+}{1-\rho_-}}}\rVert_{op}\lVert M_{\sqrt{\tfrac{1-\rho_+}{1-\rho_-}}}1_a \mathcal{K}^l\overline{1}^a  \rVert_{op}
      \end{split}
    \end{equation}
 For the first two terms, since $\lim_{\varepsilon\to 0}\det(I-\overline{1}^a\mathcal{K}^l\overline{1}^a)$ and $\lim_{\varepsilon\to 0}\det(I-\overline{1}^a\mathcal{K}^r\overline{1}^a)$ are distribution functions that are strictly greater than 0. From Fredholm theory, for $\varepsilon$ small enough, the kernels are invertible and there exists $\delta>0$ such that their operator norms are greater than $\delta$. Thus, the first two terms are uniformly bounded. For the last two terms, notice that $\sqrt{\tfrac{1-\rho_+}{1-\rho_-}}<1$. Using the proof that $\overline{1}^a\mathcal{K}^l\overline{1}^a,\overline{1}^a\mathcal{K}^r\overline{1}^a$ are bounded in trace norm, we can easily see that 
 \[\lVert    \overline{1}^a\mathcal{K}^r 1_a M_{\sqrt{\tfrac{1-\rho_+}{1-\rho_-}}}\rVert_{op}\leq (\tfrac{1-\rho_+}{1-\rho_-})^{a/2} \lVert    \overline{1}^a\mathcal{K}^r \overline{1}^a \rVert_{tr},\quad \lVert M_{\sqrt{\tfrac{1-\rho_+}{1-\rho_-}}}1_a \mathcal{K}^l\overline{1}^a  \rVert_{op}\leq (\tfrac{1-\rho_+}{1-\rho_-})^{a/2} \lVert    \overline{1}^a\mathcal{K}^l \overline{1}^a \rVert_{tr}\]
 Thus, there exist a constant $C$ such that the product of four operator norms in \eqref{eqn:67} is less than: $C(\tfrac{(1-\rho_+}{1-\rho_-})^a$. Now we compute the $\lVert M_{(1-\rho_-)/(1-\rho_+)}\overline{1}^a\rVert_{tr}$. 
 \begin{equation}
 \rVert M_{\frac{1-\rho_-}{1-\rho_+}}\overline{1}^a\rVert_{tr}\leq \sum_{k\leq a}(\frac{1-\rho_-}{1-\rho_+})^k=(\frac{1-\rho_-}{1-\rho_+})^a(\frac{1-\rho_-}{\rho_+-\rho_-})\varepsilon^{1/2}
 \end{equation}
 The last equality is due to that here $M_\rho$ is act on space $\varepsilon^{1/2}\mathbb{Z}$, and $a = (1-\rho_--\rho_+)\varepsilon^{-3/2}-\frac{N_r+N_l}{\rho_+-\rho_-}-\bs\varepsilon^{-1/2}$.
 so putting them all together we can see the norm goes to 0 as $\varepsilon \to 0$.
   \end{proof}

\section{\texorpdfstring{Proof of operator $K^l$ and $K^r$ converges}{Proof of operator Kl and Kr converges}}
In this section, we study the limit of 
\begin{equation}
    \begin{split}
        &\lim_{\varepsilon\to 0}\mathbb{E}_{X_0}\big[\det\big((I-\overline{1}^aK^l\overline{1}^a)(I-\overline{1}^aK^r\overline{1}^a)\big)\big]\\
        &=\lim_{\varepsilon\to 0}\mathbb{E}_{X_0}\big[\det(I-\overline{1}^aM_{1-\rho_-}K^lM_{1/(1-\rho_-)}\overline{1}^a)_{l^2(\mathbb{Z})}\det(I-\overline{1}^aM_{1-\rho_+}K^rM_{1/(1-\rho_+)}\overline{1}^a)_{l^2(\mathbb{Z})}\big].\\
    \end{split}
\end{equation}
Recall that we are considering the following scaling of the problem.
\begin{equation}
    \begin{split}
    &n = n_{r}+n_{l}, \quad n_{l} = \rho_-\rho_+(\varepsilon^{-3/2}\bt+\varepsilon^{-1}\bx)+n_s-\bs\varepsilon^{-1/2}\\
    &a = (1-\rho_--\rho_+)(\varepsilon^{-3/2}\bt+\varepsilon^{-1}\bx)-\frac{N_r+N_l}{\rho_+-\rho_-},\quad t = \varepsilon^{-3/2}\bt+\varepsilon^{-1}\bx.
    \end{split}
    \end{equation} 
    Recall $n_s = \frac{\rho_-N_r+\rho_+N_l}{\rho_+-\rho_-}$, which is the shift term that cancels the effect of the initial randomness from two sides. Also recall that we define the notation
    \[\tilde{\bx}_-=(\rho_--\rho_+)2^{-1}\chi_-^{-1/3}\bx,\quad \tilde{\bx}_+=(\rho_+-\rho_-)2^{-1}\chi_+^{-1/3}\bx.\]
    \[\tilde{\bs}_- = \chi_-^{-2/3}(1-2\rho_-)\bs, \quad \tilde{\bs}_+ = \chi_+^{-2/3}(1-2\rho_+)\bs.\]
    Now we want to further split the left piece into $$n_{ll} =  \rho_-^2\varepsilon^{-3/2}\bt+\rho_-\rho_+\varepsilon^{-1}\bx-\bs\varepsilon^{-1/2},\quad n_{lr} = n_{l}-n_{ll}.$$ We split the right piece into $$n_{rl} = \rho_+(\rho_+-\rho_-)\varepsilon^{-3/2}\bt,\quad n_{rr} = n_r-n_{rl}$$ 
We use notation $X^{ll}_0,X^{lr}_0,X^{rl}_0,X^{rr}_0$ for the corresponding piece of the initial condition.
We also define $n_{-} = n_{ll} = \rho_-^2\varepsilon^{-3/2}\bt+\rho_-\rho_+\varepsilon^{-1}\bx+n_s-\bs\varepsilon^{-1/2}, n_+ =n_l+n_{rl} =  \rho_+^2\varepsilon^{-3/2}\bt+\rho_-\rho_+\varepsilon^{-1}\bx+n_s-\bs\varepsilon^{-1/2}$, these two numbers are essential for the scaling.

Before we state the convergence results, let us recall some operator from \cite{MQR2021}. For $\bt>0$, 
\begin{equation}
    \begin{split}
        \bS_{\bt,\bx}(\bu,\bv):&=\tfrac{1}{2\pi i}\int_{C^{\pi/3}_{1}} dw e^{\bt w^3/3+\bx w^2+(\bu-\bv)w}\\
    &= \bt^{-1/3}e^{\tfrac{2\bx^3}{3\bt^2}-\tfrac{(\bu-\bv)\bx}{t}}\Ai(-\bt^{-1/3}(\bu-\bv)+\bt^{-4/3}\bx^2).
    \end{split}
\end{equation}
where $C_a^{\pi/3} = \{a+re^{i\pi/3}:r\in [0,\infty)\}\cup  \{a+re^{-i\pi/3}:r\in [0,\infty)\}$. This is the integral kernel for the operator $e^{\bx\partial^2+\bt\partial^3/3}$. For $\bt=0$, the operator is still well defined for $\bx>0$. For $\bt_1,\bt_2>0$, it behaves like a group, i.e. $\bS_{\bt_1,\bx_1}\bS_{\bt_2,\bx_2}=\bS_{\bt_1+\bt_2,\bx_1+\bx_2}$. One useful property of the operator is: $\bS_{-\bt,\bx}= (\bS_{\bt,\bx})^*$. Furthermore, we define
\begin{equation}
    \bS^{\mathrm{epi}(\mathfrak{g})}_{\bt,\bx}(\bv,\bu)=\mathbb{E}_{\bB(0)=\bv}[\bS_{\bt,\bx-\btau}(\bB(\btau),u)1_{\btau<\infty}].
\end{equation}
$\btau$ is the first time that a Brownian motion with variance $2$ starting from $\bv$ hits the epigraph of $\mathfrak{g}$, where $\mathrm{epi}(\mathfrak{g}) = \{(m,y):y >\mathfrak{g}(m)\}$. Notice that $\bS^{\mathrm{epi}(\mathfrak{g})}_{\bt,\bx}(\bv,\bu)$ can also be written as 
\begin{equation}\label{eqn:sepiseconddef}
    \bS^{\mathrm{epi}(\mathfrak{g})}_{\bt,\bx} = \lim_{\bT\to\infty}\bS^{\mathrm{epi}(\mathfrak{g})}_{[0,\bT]}\bS_{\bx-\bT}
\end{equation}
where $\bS^{\mathrm{epi}(\mathfrak{g})}_{[0,\bT]}(\bv,\bu)$ is the transition density of $\bB$ to go from $\bv$ at time $0$ to $\bu$ at time $\bT$ hitting the epigraph of $\mathfrak{g}$ in $[0,\bT]$.

The proof involves the convergence of multiple parts of the kernel. To make the main structure clear, all the proofs of the convergence results are given in Section (5).
\subsection{\texorpdfstring{Convergence of $K^l$}{Convergence of Kl}}\label{subsec:convergencekl}
Now look at kernel $K^l(z_1,z_2)$, recall we conjugate the kernel by $(1-\rho_-)^z$, so that we are in the setting of Section (2). We still denote it by $K^l(z_1,z_2)$ to avoid unnecessary superscripts.
\begin{equation}
\begin{split}
K^l(z_1,z_2) &= \sum_{k=0}^{n_l-1}\Psi^n_k(z_1)\Phi^n_k(z_2)\\
&=(\sum_{k=0}^{n_{ll}-1}+\sum_{k=n_{ll}}^{n_l-1}) (R_tQ^{-k}(z_1,\delta(X_0(n-k))h^n_kR^{-1}_t(z_2).
\end{split}
\end{equation}
Since $X_0(n-k) =X_0^{ll}(n_{ll}-k),  h^{n,X_0}_k(0,z_2)= h^{n_{ll},X_0^{ll}}_k(0,z_2)$, the first summation is equivalent to  $(S_{-t,-n_{ll}})^*\overline{S}^{\mathrm{epi}(X^{ll}_0)}_{-t,n_{ll}}(z_1,z_2)$. Using the Proposition \eqref{mainconvergencetheoremleft}, the first part is
\begin{equation}\label{llconvergence}
\overline{1}^a (S_{-t,-n_{ll}})^*\overline{S}^{\mathrm{epi}(X^{ll}_0)}_{-t,n_{ll}}\overline{1}^a\to \overline{1}_{-\tilde{\bs}_-} (\bS_{-\bt,\tilde{\bx}_-})^*\bS^{\mathrm{epi}(-\mathfrak{h}_0^-)}_{-\bt,-\tilde{\bx}_-}\overline{1}_{-\tilde{\bs}_-}
\end{equation}
where $\mathfrak{h}_0^-$ is a Brownian motion starting from $0$.
The second part is 
\begin{equation}\label{lrformua}
\begin{split}
&\sum_{k=n_{ll}}^{n_l-1}R_tQ^{-k}(z_1,\delta(X_0(n-k))h^n_kR^{-1}_t(z_2)\\
&=\sum_{k=0}^{n_l-n_{ll}-1}R_tQ^{-n_{ll}-k}(z_1,\delta(X^{lr}_0(n_l-n_{ll}-k))h^{n_l,X_0^l}_{n_{ll}+k}R^{-1}_t(z_2).
\end{split}
\end{equation}
Using Lemma \eqref{splittinglemma}, 
\begin{equation}
h^{n_l,X_0^l}_{n_{ll}+k}(0,z_2)=\sum_{z_3}h^{n_{lr}}_k(0,z_3)(Q^{n_{ll}}(z_3,z_2)-G^{X_0^{ll}}_{0,n_{ll}}(z_3,z_2)),
\end{equation}
equation \eqref{lrformua} equal to
\begin{equation}
\begin{split}
\sum_{k=0}^{n_l-n_{ll}-1}\sum_{z_3}R_tQ^{-n_{ll}-k}(z_1,\delta(X_0(n-n_{ll}-k))h^{n_{lr}}_k(0,z_3)(Q^{n_{ll}}R^{-1}_t(z_3,z_2)-G^{X_0^{ll}}_{0,n_{ll}}R^{-1}_t(z_3,z_2)).
\end{split}
\end{equation}
We separate the sum in $z_3$ into two parts according to whether $z_3 \leq X^l_0(n_{lr})$ or $z_3 > X^l_0(n_{lr})$. In the case $z_3 \leq X^l_0(n_{lr})$, using Proposition \eqref{rightpiececonvergence},
\begin{equation}\label{eqn:82}
    \sum_{k=0}^{n_l-n_{ll}-1}\sum_{z_3\leq X^l_0(n_{lr})}R_tQ^{-n_{ll}-k}(z_1,\delta(X_0(n-n_{ll}-k))h^{n_{lr}}_k(0,z_3) \to (\bS_{-\bt,\tilde{\bx}_-}^{\mathrm{epi}(-\mathfrak{h}_0^{l,+})})^*.
\end{equation}
Using Proposition \eqref{prop:qqbarsameinlimit},
\begin{equation}\label{eqn:83}
    Q^{n_{ll}}R^{-1}_t \to \bS_{-\bt,-\tilde{\bx}_-}.
\end{equation}Notice that $G^{X_0^{ll}}_{0,n_{ll}}R^{-1}_t=\overline{S}^{\mathrm{epi}(X^{ll}_0)}_{-t,n_{ll}}$, we have by Proposition \eqref{mainconvergencetheoremleft},
\begin{equation}\label{eqn:84}
    G^{X_0^{ll}}_{0,n_{ll}}R^{-1}_t \to \bS_{-\bt,-\tilde{\bx}_-}^{\mathrm{epi}(-\mathfrak{h}_0^{l,-})}.
\end{equation}
$\mathfrak{h}_0^+$ is a Brownian motion independent of $\mathfrak{h}_0^-$. In the case $z_3 > X^l_0(n_{lr})$, we have $G^{X_0^{ll}}_{0,n_{ll}}(z_3,z_2)=\overline{Q}^{n_{ll}}(z_3,z_2)$. Using Proposition \eqref{prop:qqbarsameinlimit}, 
\begin{equation}
||Q^{n_{ll}}R_t^{-1}-\overline{Q}^{n_{ll}}R^{-1}_t||_{tr}\to 0.
\end{equation}
Thus, it is negligible in the limit.

Combining equation \eqref{llconvergence}, \eqref{eqn:82},\eqref{eqn:83},\eqref{eqn:84}, taking an adjoint and conjugating by a reflection operator, we derived that
\begin{multline}
\lim_{\varepsilon\to 0}\mathbb{E}_{X_0}\big[\det(I-\overline{1}^aM_{1-\rho_-}K^lM_{1/(1-\rho_-)}\overline{1}^a)_{l^2(\mathbb{Z})}\big] =
\mathbb{E}_{\mathfrak{h}^l}\big[\det\big(I-1_{\tilde{\bs}_-}\big((\bS^{\mathrm{hypo}(\mathfrak{h}_0^{l,-})}_{\bt,-\tilde{\bx}_-})^*\bS_{\bt,\tilde{\bx}_-}\\+(\bS_{\bt,-\tilde{\bx}_-})^*1^{\mathfrak{h}_0^{l}(0)}\bS^{\mathrm{hypo}(\mathfrak{h}_0^{l,+})}_{\bt,\tilde{\bx}_-}-(\bS^{\mathrm{hypo}(\mathfrak{h}_0^{l,-})}_{\bt,-\tilde{\bx}_-})^*1^{\mathfrak{h}^l_0(0)}\bS^{\mathrm{hypo}(\mathfrak{h}_0^{l,+})}_{\bt,\tilde{\bx}_-})1_{\tilde{\bs}_-}\big)_{L^2(\mathbb{R})}\big]
\end{multline}
Let's call the kernel being subtracted from $I$ in the Fredholm determinant $1_{\tilde{\bs}_-}\bK_{\tilde{\bx}_-}^{\mathfrak{h}_0^l}1_{\tilde{\bs_-}}$.
 
\subsection{\texorpdfstring{Convergence of $K^r$}{Convergence of Kr}}\label{subsec:convergencekr}
Now we look at kernel $K^r(z_1,z_2)$, we conjugate the kernel by $(1-\rho_+)^z$, thus although we use the same symbol as in the previous section, they are different.
\begin{equation}\label{rformula}
\begin{split}
K^r(z_1,z_2) &= \sum_{k=n_l}^{n-1}\Psi^n_k(z_1)\Phi^n_k(z_2)\\
&=(\sum_{k=n_l}^{n_{l}+n_{rl}-1}+\sum_{k=n_{l}+n_{rl}}^{n-1}) (R_tQ^{-k}(z_1,\delta(X_0(n-k))h^n_kR^{-1}_t(z_2)
\end{split}
\end{equation}
Considering the first part, we first extend the summation to $0$, which is 
\begin{equation}\label{eqn:extendto0}
(\sum_{k=0}^{n_{l}+n_{rl}-1}-\sum_{k=0}^{n_l-1})R_tQ^{-k}(z_1,\delta(X_0(n-k))h^n_k(0,z_2)R^{-1}_t.
\end{equation}
Recall $n_l+n_{rl}=n_+$, By Proposition \eqref{prop:mainconvergencetheoremright}, The first part is 
\begin{equation}\label{rlconvergence}
\overline{1}^a(S_{-t,-n_+})^*\overline{S}^{\mathrm{epi}(X^{l+rl}_0)}_{-t,n_+}\overline{1}^a \to \overline{1}_{-\tilde{\bs}_+}(\bS_{-\bt,\tilde{\bx}_+})^*\bS^{\mathrm{epi}(-\mathfrak{h}_0^{r,-})}_{-\bt,-\tilde{\bx}_+}\overline{1}_{-\tilde{\bs}_+}
\end{equation}
where $\mathfrak{h}_0^{r,-}$ is a Brownian motion. Notice that this looks like \eqref{llconvergence}, however, there is some difference. Here, the initial condition $X^{l+rl}_0$ consists of two parts: the $X^{rl}_0$ part is a Geo($\rho_+$) random walk; $X^{l}_0$ part is a Geo($\rho_-$) random walk. The point is that the hitting probability will only use the $X^{rl}_0$ part due to the scaling, which is explained in the proposition.

The second part is 
\begin{equation}
\overline{1}^a(S^{\rho_+}_{-t,-n_{l}})^*\overline{S}^{\rho_+,\mathrm{epi}(X^{l}_0)}_{-t,n_{l}}\overline{1}^a.
\end{equation}
This looks like $K^{l}$, but with a different conjugation. That is why we make the dependence on $\rho_+$ explicit. Using Proposition \eqref{prop:wrongconjuazero}, 
\begin{equation}
||\overline{1}^a(S^{\rho_+}_{-t,-n_{l}})^*\overline{S}^{\rho_+,\mathrm{epi}(X^{l}_0)}_{-t,n_{l}}\overline{1}^a||_{tr} \to 0.
\end{equation}
One short explanation is that it has the wrong scaling.

Then we look at the second sum in \eqref{rformula}, using the exact same treatment for the left part, \begin{equation}\label{rrformua}
\begin{split}
&\sum_{k=n_+}^{n-1}R_tQ^{-k}(z_1,\delta(X_0(n-k))h^n_kR^{-1}_t(z_2)\\
&=\sum_{k=0}^{n_{rr}-1}R_tQ^{-n_+-k}(z_1,\delta(X^{rr}_0(n_{rr}-k))h^{n}_{{n_+}+k}R^{-1}_t(z_2)
\end{split}
\end{equation}
Using the Lemma \eqref{splittinglemma}, 
\begin{equation}
h^{n,X_0}_{n_{+}+k}(0,z_2)=\sum_{z_3}h^{n_{rr}}_k(0,z_3)(Q^{n_+}(z_3,z_2)-G^{X_0^{l+rl}}_{0,n_{+}}(z_3,z_2)).
\end{equation}
Thus, equation \eqref{rrformua} equals to
\begin{equation}
\begin{split}
\sum_{k=0}^{n_{rr}-1}\sum_{z_3}R_tQ^{-n_+-k}(z_1,\delta(X^{rr}_0(n_{rr}-k))h^{n_{rr}}_k(0,z_3)(Q^{n_+}R^{-1}_t(z_3,z_2)-G^{X_0^{l+rl}}_{0,n_{+}}R^{-1}_t(z_3,z_2)).
\end{split}
\end{equation}
We separate the sum in $z_3$ into two parts according to whether $z_3 \leq X^{rr}_0(n_{rr})$ or $z_3 > X^{rr}_0(n_{rr})$. In the case $z_3 \leq X^{rr}_0(n_{rr})$, Using Proposition \eqref{rightpiececonvergence},
\begin{equation}\label{eqn:95}
 \sum_{k=0}^{n_{rr}-1}\sum_{z_3\leq X^l_0(n_{lr})}R_tQ^{-n_{+}-k}(z_1,\delta(X^{rr}_0(n_{rr}-k))h^{n_{rr}}_k(0,z_3) \to (\bS_{-\bt,\tilde{\bx}_+}^{\mathrm{epi}(-\mathfrak{h}_0^{r,+})})^*.
\end{equation}
Using Proposition \eqref{prop:qqbarsameinlimit},
\begin{equation}\label{eqn:96}
    Q^{n_{+}}R^{-1}_t \to \bS_{-\bt,-\tilde{\bx}_+}.
\end{equation}
Since $G^{X_0^{l+rl}}_{0,n_{+}}R^{-1}_t=\overline{S}^{\mathrm{epi}(X^{l+rl}_0)}_{-t,n_+}$, using
\eqref{prop:mainconvergencetheoremright}, we have
\begin{equation}\label{eqn:97}
    G^{X_0^{l+rl}}_{0,n_+}R^{-1}_t \to \bS_{-\bt,-\tilde{\bx}_+}^{\mathrm{epi}(-\mathfrak{h}_0^{r,-})}.
\end{equation}
$\mathfrak{h}_0^{r,+}$ is a Brownian motion independent of $\mathfrak{h}_0^{r,-}$. In the case $z_3 > X^{rr}_0(n_{rr})$, we have $G_{0,n_+}(z_3,z_2)=\overline{Q}^{n_{+}}(z_3,z_2)$. Using Proposition \eqref{prop:qqbarsameinlimit},
\begin{equation}
||Q^{n_{+}}R_t^{-1}-\overline{Q}^{n_{+}}R^{-1}_t||_{tr}\to0
\end{equation}
Thus it is negligible in the limit.

Combining equations \eqref{rlconvergence},\eqref{eqn:95},\eqref{eqn:96},\eqref{eqn:97}, taking an adjoint and conjugating by the reflection operator, we derive that:
\begin{multline}
\lim_{\varepsilon\to 0}\mathbb{E}_{X_0}\big[\det(I-\overline{1}^a M_{1-\rho_+}K^rM_{1/(1-\rho_+)}\overline{1}^a)_{l^2(\mathbb{Z})}\big] =
\mathbb{E}_{\mathfrak{h}_0^{r}}\big[\det\big(I-1_{\tilde{\bs}_+}\big((\bS^{\mathrm{hypo}(\mathfrak{h}_0^{r,-})}_{\bt,-\tilde{\bx}_-})^*\bS_{\bt,\tilde{\bx}_-}\\+(\bS_{\bt,-\tilde{\bx}_-})^*1^{\mathfrak{h}_0^r(0)}\bS^{\mathrm{hypo}(\mathfrak{h}_0^{r,+})}_{\bt,\tilde{\bx}_-}-(\bS^{\mathrm{hypo}(\mathfrak{h}_0^{r,-})}_{\bt,-\tilde{\bx}_-})^*1^{\mathfrak{h}^r_0(0)}\bS^{\mathrm{hypo}(\mathfrak{h}_0^{r,+})}_{\bt,\tilde{\bx}_-})1_{\tilde{\bs}_+}\big)_{L^2(\mathbb{R})}\big].
\end{multline}
Let us call the kernel being subtracted from $I$ in the Fredholm determinant $1_{\tilde{\bs}_+}\bK_{\tilde{\bx}_+}^{\mathfrak{h}^r}1_{\tilde{\bs}_+}$.
\subsection{Identify the limiting kernel}\label{sec:identifylimitingkernel}
Conclude from previous two sections, we have 
\begin{equation}\label{limitingformula}
\begin{split}
&\lim_{\varepsilon \to 0}\mathbb{P}(X_t(\rho_+\rho_-y_\varepsilon+\frac{\rho_-N_r+\rho_+N_l}{\rho_+-\rho_-}-\bs\varepsilon^{-1/2})\geq (1-\rho_+-\rho_-)y_\varepsilon-\frac{N_r+N_l}{\rho_+-\rho_-})\\
 &= \int \mu_W^l(d\mathfrak{h}^{l})\mu^r_{W}(d \mathfrak{h}^{r})\det(I-1_{\tilde{\bs}_-}\bK_{\tilde{\bx}_-}^{\mathfrak{h}^l}1_{\tilde{\bs}_-})_{L^2(\mathbb{R})}\det(I-1_{\tilde{\bs}_+}\bK_{\tilde{\bx}_+}^{\mathfrak{h}^r}1_{\tilde{\bs}_+})_{L^2(\mathbb{R})}
\end{split}
\end{equation}
where $\mu_W^l,\mu_W^r$ are independent Wiener measure on $C(\mathbb{R})$.
Now we want to identify these two integrations of Fredholm determinants. Using the following results from \cite{bfp2010}. 

\begin{theorem}\cite{bfp2010}
Let $\mu_\rho$ be the probability measure on $\{0,1\}^\mathbb{Z}$ such that $\mu_\rho(\eta_j = 1) = \rho$. Let $\chi=\rho(1-\rho)$. Let the rightmost particle on the negative integer sites be labeled $X(1)$. Define the following scaling
\begin{equation}
\begin{split}
n_\varepsilon &= \rho^2\varepsilon^{-3/2}\bt-2\rho \chi^{1/3}\varepsilon^{-2/2}\bx\\
r_\varepsilon &= (1-\rho)^2\varepsilon^{-3/2}\bt+2\chi^{1/3}\varepsilon^{-2/2}\bx-(1-\rho)\chi^{-1/3}\varepsilon^{-1/2}\bs.
\end{split}
\end{equation}
Then the one point distribution function of TASEP starting from stationary initial data has the following limit, 
\begin{equation}\label{stationarytasep}
 \lim_{\varepsilon \to 0}\mathbb{P}(X_{\varepsilon^{-3/2}\bt}(n_\varepsilon)\geq r_\varepsilon) = \frac{\partial}{\partial \bt^{-1/3}\bs}(g(\bt^{-2/3}\bx,\bt^{-1/3}\bs)\det(1-1_{\bt^{-1/3}\bs} K_{Ai}1_{\bt^{-1/3}\bs})_{L^2(\mathbb{R})})
 \end{equation} 
   $ K_{\rm Ai}$ is the so-called Airy kernel~\cite{J03}\cite{PS02} with shifted entries  defined by the kernel
\begin{equation}\label{eqKhat}
\begin{aligned}
& K_{\rm Ai}(x, y)=\int_0^\infty d\lambda \Ai(x+\lambda+\tau)\Ai(y+\lambda+\tau).
\end{aligned}
\end{equation}
The function $g(\tau,s)$ is defined by
\begin{equation}\label{eq:gm}
\begin{aligned}
g(\tau,s)&={\mathcal{R}}-\langle \rho P_s \Phi,P_s\Psi\rangle \\
&={\mathcal{R}}-\int_{s}^\infty dx\int_{s}^\infty dy \, \Psi(y) \rho(y,x) \Phi(x),
\end{aligned}
\end{equation}
where $\rho:=(\mathbbm{1}-P_s  K_{\rm Ai} P_s)^{-1}$. Finally the functions $\mathcal{R}$, $\Phi$, and $\Psi$ are defined as
\begin{equation}\label{eq1.12}
\begin{aligned}
{\mathcal{R}} =&\, s+e^{-\frac23\tau^3} \int_{s}^\infty dx\int_0^\infty dy\, \Ai(x+y+\tau^2) e^{-\tau(x+y)},\\
\Psi(y)=&\, e^{\frac23\tau^3+\tau y}-\int_0^\infty dx\, \Ai(x+y+\tau^2)e^{-\tau x},\\
\Phi(x)=&\, e^{-\frac23\tau^3}\int_0^\infty d\lambda\int_{s}^\infty dy\, e^{-\lambda(\tau-\tau)}e^{-\tau y} \Ai(x+\tau^2+\lambda) \Ai(y+\tau^2+\lambda)\\
+& -\int_{0}^\infty dy\, \Ai(y+x+\tau^2)e^{\tau y},
\end{aligned}
\end{equation}
where $\Ai$ denotes the Airy function.
\end{theorem}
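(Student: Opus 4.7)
The plan is to apply the biorthogonalization machinery developed in Section~2 directly to the Bernoulli$(\rho)$ product initial condition. The structural observation that makes stationarity tractable is that under this measure the gaps $X_0(k)-X_0(k+1)$ are i.i.d.\ Geom$(\rho)$, so the reverse random walk $B$ used to define the hitting operator $\overline{S}^{\mathrm{epi}(X_0)}_{-t,n}$ has increments of exactly the same distribution as the initial configuration. This self-duality is what allows the expectation over $X_0$ to be carried out in closed form rather than being absorbed into a Wiener measure as in Sections~\ref{subsec:convergencekl}--\ref{subsec:convergencekr}.

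First I would write
\begin{equation*}
\mathbb{P}(X_t(n)\ge r) \;=\; \mathbb{E}_{X_0}\!\det\!\bigl(I - \overline{1}^{\,r}\,(S_{-t,-n})^{*}\,\overline{S}^{\mathrm{epi}(X_0)}_{-t,n}\,\overline{1}^{\,r}\bigr)_{\ell^{2}(\mathbb{Z})}
\end{equation*}
conjugated by $(1-\rho)^{x}$ so that the Geom$(\rho)$ transition operator $Q_{\rho}$ has a sensible scaling limit. I would then use the duality above to average over $X_0$ inside the Fredholm expansion: the hitting distribution of $B$ against the i.i.d.\ curve $X_0$ is explicit and can be evaluated as a geometric resummation, producing a kernel that no longer depends on $X_0$ but carries a single rank-one boundary correction located at the seed particle $X_0(1)$ near the origin.

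Next I would perform a standard saddle-point analysis on the resulting double-contour integral representation. Under the scaling in the theorem, the exponent
\[
\bt\,\tfrac{w^{3}}{3}+\bx\, w^{2}+(\bu-\bv)w
\]
emerges after centering the integration variable at $w=\rho$ and rescaling by $\chi^{1/3}\varepsilon^{1/2}$, so steepest descent yields the shifted Airy kernel $K_{\mathrm{Ai}}$ with shift $\tau=\bt^{-2/3}\bx$. The rank-one boundary correction contributes the pair of functions $\Phi$ and $\Psi$ and the scalar $\mathcal{R}$ in the limit, so the determinant factorizes as $g(\tau,s)\det(I-P_{s}K_{\mathrm{Ai}}P_{s})$; differentiating in $\bt^{-1/3}\bs$ removes the overall normalization inherent in the Baik--Rains construction and yields the one-point marginal $F_\tau$.

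The main obstacle, as usual in stationary asymptotics, will be the exchange of $\varepsilon\to 0$ and $\mathbb{E}_{X_0}$: this requires a trace-norm bound on the conjugated kernel that is uniform in $\varepsilon$ \emph{and} in the realization of $X_0$, since the Bernoulli initial condition contributes typical fluctuations of order $\varepsilon^{-3/4}$ which sit on a coarser scale than the $t^{1/3}$ fluctuations of interest. I expect this to reduce to decay estimates on $(S_{-t,-n})^{*}$ analogous to the ones proved for $\overline{1}^{a_\varepsilon}M_{1-\rho_\pm}K^{l/r}_{\varepsilon}M_{1/(1-\rho_\pm)}\overline{1}^{a_\varepsilon}$ in the appendix, combined with a careful tracking of the pole at $w=\rho$ that is crossed when the contour is deformed through the saddle. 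Correctly identifying the residue there with the specific functions $\Phi,\Psi,\mathcal{R}$ rather than with some less canonical equivalent is the delicate bookkeeping step that delivers the explicit Baik--Rains formula.
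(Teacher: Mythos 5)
The statement you are asked to prove is a \emph{cited} result from \cite{bfp2010}; the paper under review invokes it without proof in Section~\ref{sec:identifylimitingkernel} to identify the limiting Fredholm determinants with the Baik--Rains one-point distribution $F_\tau$. So there is no in-paper proof to compare against. What you have sketched is a new derivation of the BFP formula by running the density-$\rho$ MQR machinery of Section~2 against the Bernoulli$(\rho)$ initial data itself, rather than (as in \cite{bfp2010}) via the LPP/Schur-process route with an analytic-continuation or shift argument. That is a genuinely different and in principle attractive strategy, and your self-duality observation (the hitting walk $B^*$ and the gaps of $X_0$ are both Geom$(\rho)$) is the right thing to exploit.

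However, as written the proposal has substantial gaps.

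\emph{The averaging step is asserted, not demonstrated.} You claim the expectation over $X_0$ inside the Fredholm expansion can be ``carried out in closed form'' via a ``geometric resummation,'' leaving a kernel with a single rank-one boundary correction. This is the entire content of a stationary-TASEP derivation and nothing you have written shows it actually goes through. The hitting operator $\overline{S}^{\mathrm{epi}(X_0)}_{-t,n}$ enters the determinant nonlinearly, so averaging over $X_0$ does not commute past the determinant; you would have to show the full averaged kernel coincides with a finite-rank perturbation of the unaveraged one, and the self-duality alone does not give this. The reflection/ballot structure you gesture at is a plausible mechanism, but you would need to carry it out and verify that what emerges are precisely the functions $\Phi$, $\Psi$, $\mathcal{R}$ of \eqref{eq1.12} rather than some inequivalent rank-one correction.

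\emph{The pole bookkeeping is confused.} In the contour representation \eqref{formulaSandSbar}, the relevant point $w=\rho$ (respectively $w=1-\rho$) is a \emph{double saddle point}, not a pole; the kernels $S_{-t,-n}$ and $\overline{S}_{-t,n}$ have poles only at $w=0$ and $w=1$. The pole-at-the-density-parameter phenomenon you seem to have in mind belongs to the LPP/tilted-measure formulation in \cite{bfp2010}, not to the MQR contour. Conflating the two suggests the mechanism producing the rank-one term in your approach has not actually been located.

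\emph{The derivative structure is misattributed.} You write that differentiating in $\bt^{-1/3}\bs$ ``removes the overall normalization inherent in the Baik--Rains construction.'' This has it backwards: the $\partial_s$ in \eqref{stationarytasep} is not a normalization device but an intrinsic feature arising from the relation between $\mathbb{P}(X_t(n)\ge r)$ and a difference of two shifted determinants (equivalently, from a telescoping over the extra sum produced by the stationary measure). Your sketch gives no mechanism that would produce an $s$-derivative at all, so the final identity would not come out in the stated form.

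Finally, the uniform-in-$X_0$ trace-norm control you flag is real, but it is not the heart of the difficulty; the hard part is the algebraic cancellation that collapses the averaged kernel to $\partial_s\bigl(g(\tau,s)\det(I-P_sK_{\mathrm{Ai}}P_s)\bigr)$, and that step is missing. Until the ``geometric resummation'' lemma and the origin of the $s$-derivative are written out, the proposal is a promising outline rather than a proof.
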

The right-hand side is the one-point distribution for the $\Aistat$ process. At $\bx=0,\bt=1$, it is the Baik-Rains distribution \cite{BR00}\cite{FS06}. On the left-hand side, it is almost the same scaling that we took, but with a different constant coefficient. In our language, it is $\int \mu_W(d\mathfrak{h})\det(I-1_{\bt^{-1/3}\bs} \bK^{\mathfrak{h}}_{\bt^{-2/3}\bx}1_{\bt^{-1/3}\bs})$. The scaling limit is almost exactly the same as in Proposition \eqref{mainconvergencetheoremleft},\eqref{prop:mainconvergencetheoremright}, except that there is no constant coefficient on $\bs,\bx$. Using this theorem, we can conclude that \eqref{limitingformula} is the product of $F_{\bt^{-2/3}\tilde{\bx}_-}(\bt^{-1/3}\tilde{\bs}_-)$ and $F_{\bt^{-2/3}\tilde{\bx}_+}(\bt^{-1/3}\tilde{\bs}_+)$, which completes our proofs.

\section{Convergence Results}
We first state our main convergence theorem for the ``good" part of the operators. This is the type of standard steepest descent analysis for models in the KPZ universality class. The proof is similar to the proof in \cite{MQR2021}, adapted to the density $\rho$ case. The proof of uniform boundedness on the trace norm is technical, which will be left to the appendix.
\begin{proposition}\label{mainconvergencetheoremleft}
Assume that we scale the variables in the following way. 
    \begin{equation}
    \begin{split}
    &n_- = \rho_-^2\varepsilon^{-3/2}\bt+\rho_-\rho_+\varepsilon^{-1}-\varepsilon^{-1/2}\bs, t = \varepsilon^{-3/2}\bt+\varepsilon^{-1}\bx\\
    &a = (1-\rho_--\rho_+)(\varepsilon^{-3/2}\bt+\varepsilon^{-1}\bx)-\frac{N_r+N_l}{\rho_+-\rho_-}.
    \end{split}
    \end{equation}
    We also scale $x_1 = \varepsilon^{-3/2}(\rho_--\rho_+)\bt-\frac{N_r+N_l}{\rho_+-\rho_-}+\varepsilon^{-1/2}\chi_-^{-1/3}(1-\rho_-)\bv, x_2 = a+\varepsilon^{-1/2}\chi_-^{-1/3}(1-\rho_-)\bu$. As $\varepsilon \to 0$,
    \begin{equation}
    \begin{split}
        &\varepsilon^{-1/2}\chi_-^{-1/3}(1-\rho_-)S_{-t,-n_-}(x_1,x_2) \rightarrow \mathbf{S}_{-\bt,\bx}(\bv,\bu),\\
    &\varepsilon^{-1/2}\chi_-^{-1/3}(1-\rho_-)\overline{S}_{-t,n_-}(x_1,x_2) \rightarrow \mathbf{S}_{-\bt,-\bx}(\bv,\bu).\\
    \end{split}
    \end{equation}
\begin{proposition}\label{epiconvergenceleft}
Under the scaling of previous proposition, 
\begin{equation}
\varepsilon^{-1/2}\chi_-^{-1/3}(1-\rho_-)\overline{S}_{-t,n_-}^{\mathrm{epi}(X^{ll}_0)}(x_1,x_2) \to \bS_{-\bt,-\bx}^{\mathrm{epi}(-\mathfrak{h}_0^{l,-})}(\bv,\bu)\\
\end{equation}
where $\mathfrak{h}^{l,-}_0$ is a Brownian motion.
\end{proposition}
\begin{proof}
Recall that \begin{equation}
\overline{S}_{-t,n}^{\mathrm{epi}(X_0)}(x_1,x_2)  = \mathbb{E}_{B_0=x_1}[\overline{S}_{-t,n-\tau}(B_\tau,x_2)1_{\tau<n}]
\end{equation}
Now we want to investigate what does $X_0^{ll,\varepsilon}$ converges to. For each fixed $X_0$, $X_0^{ll,\varepsilon}$ does not converge, since the starting place of $X_0^{ll,\varepsilon}$ changes for every $\varepsilon$. However, it converges in distribution to the Brownian motion. By our scaling, the expectation is over a geometric random walk starts at $\varepsilon^{-3/2}(\rho_--\rho_+)\bt-\frac{N_r+N_l}{\rho_+-\rho_-}+\varepsilon^{-1/2}\bv$, jumps to the left with the step size being Geo$(\rho_-)$. By our splitting of the formula,  $X^{ll,\varepsilon}_0(1)$ is $X_0( n_r+\rho_-(\rho_--\rho_+)\varepsilon^{-3/2}\bt+\frac{\rho_-N_r+\rho_+N_l}{\rho_+-\rho_-})$. Write it as $$X_0( n_r+\rho_-(\rho_--\rho_+)\varepsilon^{-3/2}\bt+N_l+(\frac{\rho_-N_r+\rho_+N_l}{\rho_+-\rho_-}-N_l)).$$ Notice that $X_0(n_r+\rho_-(\rho_--\rho_+)\varepsilon^{-3/2}\bt+N_l)$ is the particle exactly at position $\varepsilon^{-3/2}(\rho_--\rho_+)\bt$ (with $O(1)$ error). And counting the mean contribution from the term $(\frac{\rho_-N_r+\rho_+N_l}{\rho_+-\rho_-}-N_l)$, which is $-\frac{N_r+N_l}{\rho_+-\rho_-}$. The place of $X^{ll}_0(1)$ on order greater than or equal $\varepsilon^{-3/4}$ is
\begin{equation}
\varepsilon^{-3/2}(\rho_--\rho_+)\bt-\frac{N_r+N_l}{\rho_+-\rho_-}.
\end{equation}
So, the place of the initial particle matches the starting point of the random walk on the $\varepsilon^{-3/2}$ terms and the $\varepsilon^{-3/4}$ term. Thus, the formula only depends on the relative position of $X^{ll}_0(1)$ and the random walk on the $\varepsilon^{-1/2}$ scale. We are looking at the scaling of the initial configuration starting from $X_0(n_r+\rho_-(\rho_--\rho_+)\varepsilon^{-3/2}\bt+\frac{\rho_-N_r+\rho_+N_l}{\rho_+-\rho_-})$ to the left and right. On the left, the increments are i.i.d Geo($\rho_-$) random variables since the particles after $n_r+\rho_-(\rho_--\rho_+)\varepsilon^{-3/2}+\frac{\rho_-N_r+\rho_+N_l}{\rho_+-\rho_-}$ are independent of the random variables $N_r,N_l$. Now we consider the same scaling for the walk and initial condition: $B_\varepsilon(\bx)=\varepsilon^{1/2}(B_{\varepsilon^{-1}\bx}+\varepsilon^{-1}\bx/\rho_--1)$ and $X_0^{ll,\varepsilon}=\varepsilon^{1/2}(X^\varepsilon_0(\varepsilon^{-1}\bx)+\varepsilon^{-1}\bx/\rho_--1)$. Let $\tau^\varepsilon$ be the hitting time by $B_\varepsilon$ of $\mathrm{epi}(-X_0^{ll,\varepsilon})$ By the Donsker theorem, $B_\varepsilon$ converges locally uniformly in distribution to a Brownian motion with diffusion coefficient 2, and $\tau^\varepsilon $ converges to the hitting time of a Brownian motion, using the following proposition proved in \cite{MQR2021}.

\end{proof}
\end{proposition}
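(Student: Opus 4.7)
The plan is to start from the probabilistic representation
\[
\overline{S}_{-t,n_-}^{\mathrm{epi}(X_0^{ll})}(x_1,x_2)=\mathbb{E}_{B_0=x_1}\bigl[\overline{S}_{-t,n_--\tau}(B_\tau,x_2)\,\mathbf{1}_{\tau<n_-}\bigr]
\]
and interpret every object under the KPZ-scaling chosen in Proposition \ref{mainconvergencetheoremleft}. First I would apply the change of variables that rescales space by $\varepsilon^{1/2}$ and time by $\varepsilon$, and check that the prefactor $\varepsilon^{-1/2}\chi_-^{-1/3}(1-\rho_-)$ exactly absorbs the Jacobian coming from the $\bu$-variable. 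After this rescaling, the inner kernel $\overline{S}_{-t,n_--\tau}$ evaluated at a rescaled argument converges pointwise to $\bS_{-\bt,-\bx-\boldsymbol\tau}$ by a mild strengthening of Proposition \ref{mainconvergencetheoremleft} (allowing the ``time'' index to be shifted by an $O(\varepsilon^{-1})$ stopping time).

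Next I would identify the two random ingredients. The walk $B_m$ in the formula is a Geom$(\rho_-)$ random walk whose diffusive rescaling, by Donsker's invariance principle, converges locally uniformly in distribution to a Brownian motion $\bB$ with diffusion coefficient $2$ (after matching mean drifts with the parameters in the scaling). On the other hand, after the centering carried out in the excerpt, the rescaled initial data $X_0^{ll,\varepsilon}$ converges locally uniformly in distribution to an independent two-sided Brownian motion $\mathfrak{h}_0^{l,-}$, again by Donsker, because the increments of $X_0^{ll}$ past the matching point are i.i.d.\ Geom$(\rho_-)$ and independent of the conditioning variables $N_r,N_l$. By Skorokhod's representation I can, without loss of generality, realize both convergences almost surely on a common probability space.

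Having set this up, the convergence $\tau^\varepsilon\to\boldsymbol\tau$ of the rescaled hitting time is a continuous-mapping statement: the epigraph hitting functional $(\bB,\mathfrak{h})\mapsto\inf\{m\ge 0:\bB(m)>-\mathfrak{h}(m)\}$ is a.s.\ continuous at pairs of independent Brownian trajectories (no sticky crossings), a fact established in \cite{MQR2021} and quoted at the end of the excerpt. Combining this with the pointwise convergence of $\overline{S}_{-t,n_--\tau}(B_\tau,x_2)$ to $\bS_{-\bt,-\bx-\boldsymbol\tau}(\bB(\boldsymbol\tau),\bu)$, and using the continuity of $\bS_{-\bt,\cdot}$ in its arguments, I obtain the a.s.\ convergence of the integrand inside the expectation.

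Finally, to pass the limit through the expectation I would establish uniform integrability by a Gaussian tail bound: standard steepest-descent estimates for $\overline{S}_{-t,n}$ give $|\overline{S}_{-t,n-\tau}(B_\tau,x_2)|\le C\varepsilon^{1/2}e^{-c(\bv-\bu)}$ away from the critical region, so one can cut the expectation at a large window in $\boldsymbol\tau$ and control the tail uniformly in $\varepsilon$. Dominated convergence then yields
\[
\varepsilon^{-1/2}\chi_-^{-1/3}(1-\rho_-)\overline{S}_{-t,n_-}^{\mathrm{epi}(X_0^{ll})}(x_1,x_2)\longrightarrow\mathbb{E}_{\bB(0)=\bv}\bigl[\bS_{-\bt,-\bx-\boldsymbol\tau}(\bB(\boldsymbol\tau),\bu)\mathbf{1}_{\boldsymbol\tau<\infty}\bigr]=\bS^{\mathrm{epi}(-\mathfrak{h}_0^{l,-})}_{-\bt,-\bx}(\bv,\bu).
\]
The main obstacle I anticipate is the last step: proving uniform integrability in a setting where both the walk and the random epigraph are rescaling simultaneously, and where the hitting time can in principle reach the boundary $n_-$; carefully controlling the probability that $\tau$ approaches $n_-$ (so that the indicator $\mathbf{1}_{\tau<n_-}$ genuinely converges to $\mathbf{1}_{\boldsymbol\tau<\infty}$) will require the pointwise estimates and stretched-exponential tail bounds on the prelimit kernels that are developed alongside the trace-norm estimates in the appendix.
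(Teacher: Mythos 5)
Your proposal takes essentially the same route as the paper: probabilistic representation of $\overline{S}^{\mathrm{epi}}$, Donsker's theorem applied simultaneously to the rescaled Geom$(\rho_-)$ walk and to the rescaled initial condition, convergence of the epigraph hitting time via the continuity result quoted from \cite{MQR2021} (Proposition \ref{hittingprobabilityconvergence}), and passage of the limit through the expectation. The paper's proof as printed is more terse and actually stops at the Donsker/hitting-time step, deferring the integrability issues to the trace-norm estimates in the appendix, whereas you explicitly flag the Skorokhod upgrade to almost-sure convergence and the dominated convergence step as points needing justification.

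One substantive point you identify but do not carry out, and which the paper treats more carefully than it first appears, is the centering bookkeeping: showing that the starting point of the rescaled walk and the starting point of $X_0^{ll,\varepsilon}$ agree at orders $\varepsilon^{-3/2}$ \emph{and} $\varepsilon^{-3/4}$ (the latter being exactly where the random shift $-\tfrac{N_r+N_l}{\rho_+-\rho_-}$ lives), so that only an $O(\varepsilon^{-1/2})$ discrepancy remains and the limit sees a Brownian motion started at the correct position independent of $N_r,N_l$. You point to ``the centering carried out in the excerpt'' rather than reproducing it, so if this were a genuinely blind attempt that calculation would be a real gap, since without it the hitting problem does not even converge to a sensible limit. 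Also, a small correction: $\mathfrak{h}_0^{l,-}$ in the limit is a one-sided Brownian motion started from $0$ (the hitting time only samples $\by\ge 0$), not a two-sided one. Finally, your sketched tail bound $|\overline{S}_{-t,n-\tau}(B_\tau,x_2)|\le C\varepsilon^{1/2}e^{-c(\bv-\bu)}$ is not the correct form (the decay in the arguments is stretched-exponential, of order $e^{-c|\cdot|^{3/2}}$, coming from the cubic in the steepest-descent exponent), and this does matter for the uniform integrability argument you invoke; the appendix estimates (equations \eqref{eqn:188}--\eqref{eqn:191}) are what actually supply the needed bound.
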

\begin{proposition}\cite{MQR2021}\label{hittingprobabilityconvergence}
Suppose $g_\varepsilon \to g$ locally in LC. Let $B(x),x>0$ be a Brownian motion starting at $z < g(0)$ and let $B_\varepsilon(x)$ be stochastic processes with $B_\varepsilon \to B$ in UC, in distribution. Let $\tau^\varepsilon = \inf\{x \geq 0: B(x)\geq g(x)\}$ be the first hitting times of the $\mathrm{epi}(g_\varepsilon)$ and $\mathrm{epi}(g)$. Then $\tau^\varepsilon \to \tau$ in the distribution. Furthermore, convergence is uniform over $g$ in the set of bounded H\"older $\beta$-norm, $\beta \in (0,1/2)$.
\end{proposition}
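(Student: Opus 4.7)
The plan is a standard Skorokhod-plus-continuity-of-hitting-time argument. By the Skorokhod representation theorem, I may couple the processes on a single probability space so that $B_\varepsilon \to B$ locally uniformly almost surely; since $g_\varepsilon \to g$ deterministically in the same topology, the claim reduces to almost sure convergence $\tau^\varepsilon \to \tau$ under this coupling, which I would establish by matching upper and lower bounds on $\limsup \tau^\varepsilon$ and $\liminf \tau^\varepsilon$.

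The lower bound is the easy half. Fix $\delta > 0$ with $\tau - \delta > 0$. By continuity of $B - g$ and the definition of $\tau$ as the first hitting time, $m := \max_{s \in [0, \tau - \delta]}(B(s) - g(s)) < 0$, so local uniform convergence gives $\sup_{s \in [0, \tau - \delta]}(B_\varepsilon(s) - g_\varepsilon(s)) < m/2 < 0$ for $\varepsilon$ small, and hence $\tau^\varepsilon > \tau - \delta$. Letting $\delta \downarrow 0$ yields $\liminf \tau^\varepsilon \geq \tau$ almost surely.

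The upper bound uses that a Brownian path strictly crosses any continuous barrier immediately after touching it. Concretely, the Blumenthal $0$--$1$ law applied to $B(\tau + \cdot) - g(\tau + \cdot)$, together with $z < g(0)$ which guarantees $\tau > 0$ almost surely and hence that $B$ arrives at the graph from strictly below, produces, for every $\delta > 0$, an almost sure time $s_\delta \in (\tau, \tau + \delta)$ and an $\eta > 0$ with $B(s_\delta) > g(s_\delta) + \eta$. Local uniform convergence then forces $B_\varepsilon(s_\delta) > g_\varepsilon(s_\delta)$ for small $\varepsilon$, so $\tau^\varepsilon \leq s_\delta < \tau + \delta$, and sending $\delta \downarrow 0$ gives $\limsup \tau^\varepsilon \leq \tau$.

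For the uniformity over $g$ with bounded H\"older $\beta$-norm, the set of such $g$ is precompact in the local uniform topology by Arzel\`a--Ascoli, so the deterministic part of the argument above is automatically uniform. The main obstacle, and the only place where the H\"older hypothesis is essential, is making the Blumenthal step above quantitative uniformly in $g$: one needs $\mathbb{P}\bigl(\sup_{s \in [0, \delta]}(B(\tau + s) - g(\tau + s)) \leq \eta\bigr) \to 0$ as $\eta \downarrow 0$ uniformly over the class, for each fixed $\delta$. This I would obtain by bounding $g(\tau + s) - g(\tau) \leq C s^\beta$ with $C$ uniform in the class and comparing the resulting event to the supremum of a pure Brownian increment, whose distribution is explicit; since $\beta < 1/2$ the Brownian supremum dominates on the relevant scale. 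Converting the resulting almost sure modulus-of-continuity bound into a uniform distributional estimate on $\tau^\varepsilon - \tau$ then completes the proof.
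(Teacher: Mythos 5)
The paper cites [MQR2021] for this proposition and does not reproduce a proof, so I am judging your argument on its own terms. Your overall strategy --- Skorokhod coupling to reduce to almost sure convergence, an easy lower bound from the pre-hitting gap, an upper bound from immediate crossing at the hitting time, and a quantitative version for uniformity --- is the right one and lines up with the cited reference.

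There is, however, a real gap in how you frame the upper bound. The claim that ``a Brownian path strictly crosses any continuous barrier immediately after touching it'' is false, and so is the subsequent assertion that the H\"older hypothesis is needed only to make the Blumenthal step quantitative. Blumenthal's $0$--$1$ law at the stopping time $\tau$ tells you only that the immediate-crossing event has conditional probability $0$ or $1$ given $\mathcal{F}_\tau$; which one it is depends on the modulus of $g$ at $\tau$. A barrier whose increments near $\tau$ behave like $(1+\epsilon)\sqrt{2s\log\log(1/s)}$ is \emph{not} immediately crossed --- the law of the iterated logarithm forces the conditional probability to be $0$ --- so without a regularity assumption the conclusion $\tau^\varepsilon\to\tau$ can actually fail. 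It is exactly $\beta<1/2$ that lets you bound $g(\tau+s)-g(\tau)\leq Cs^\beta$ and compare with the LIL lower bound for $B(\tau+s)-B(\tau)$ to force the probability to be $1$; the H\"older condition is therefore already doing essential work in the pointwise upper bound, not only in the uniform one. Once that is corrected the rest is fine: monotonicity in $C$ plus the fact that $\sup_{s\in(0,\delta]}\bigl(W(s)-Cs^\beta\bigr)$ is a.s.\ positive with continuous law gives $\mathbb{P}\bigl(\sup_{s\in(0,\delta]}(W(s)-Cs^\beta)\leq\eta\bigr)\to 0$ as $\eta\downarrow 0$, uniformly over bounded $C$. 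I would also caution that ``precompactness by Arzel\`a--Ascoli makes the deterministic part automatically uniform'' is not an argument by itself; compactness only helps once you have continuity of the relevant functional of $g$, which is exactly what the quantitative crossing estimate supplies. That last point is presentational rather than conceptual, and the LC-versus-local-uniform convergence distinction is glossed but harmless once $g$ is restricted to the H\"older class.
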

Now we state the result for the convergence for the right half, the scaling is almost the same as in Proposition \eqref{mainconvergencetheoremleft}.
\begin{proposition}\label{prop:mainconvergencetheoremright}
    We consider the following scaling.
    \begin{equation}
    \begin{split}
    &n_+ = \rho_+^2\varepsilon^{-3/2}\bt+\rho_-\rho_+\varepsilon^{-1}-\varepsilon^{-1/2}\bs, t = \varepsilon^{-3/2}\bt+\varepsilon^{-1}\bx\\
    &a = (1-\rho_--\rho_+)(\varepsilon^{-3/2}\bt+\varepsilon^{-1}\bx)-\frac{N_r+N_l}{\rho_+-\rho_-}.
    \end{split}
    \end{equation}
    If we set $x_1 = \varepsilon^{-3/2}(\rho_+-\rho_-)\bt+\frac{N_r-N_l}{\rho_+-\rho_-}+\varepsilon^{-1/2}\chi_+^{-1/3}(1-\rho_+)\bv, x_2 = a+\varepsilon^{-1/2}\chi_+^{-1/3}(1-\rho_+)\bu$. As $\varepsilon \to 0$,
    \begin{equation}\label{eqn:103}
    \begin{split}
        \varepsilon^{-1/2}\chi_+^{-1/3}(1-\rho_+)S_{-t,-n_+}(x_1,x_2) \rightarrow \mathbf{S}_{-\bt,\bx}(\bv,\bu),\\
    \varepsilon^{-1/2}\chi_+^{-1/3}(1-\rho_+)\overline{S}_{-t,n_+}(x_1,x_2) \rightarrow \mathbf{S}_{-\bt,-\bx}(\bv,\bu).
    \end{split}
    \end{equation}
\end{proposition}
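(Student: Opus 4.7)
The plan is a steepest-descent asymptotic analysis of the two contour integrals in \eqref{formulaSandSbar} with $\rho=\rho_+$, entirely parallel to the argument underlying Proposition \ref{mainconvergencetheoremleft}; the only substantive change is that the density $\rho_-$ is replaced by $\rho_+$ throughout, which shifts the saddle and rescales the microscopic window. Writing each integrand as $e^{f_\varepsilon(w)}$ times a $w$-independent prefactor, the exponent for $S_{-t,-n_+}$ is
\[
f_\varepsilon(w) = t\bigl(w-(1-\rho_+)\bigr) + n_+\log\tfrac{1-w}{w} - (x_2-x_1)\log w + C_\varepsilon,
\]
and the exponent for $\overline{S}_{-t,n_+}$ has a structurally similar form whose saddle sits at $w^*=\rho_+$ rather than $w^*=1-\rho_+$.

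Plugging in $n_+ = \rho_+^2 t + O(\varepsilon^{-1})$ and $x_2-x_1 = (1-2\rho_+)t + O(\varepsilon^{-1})$, the saddle equation $f_\varepsilon'(w)=0$ collapses to $\bigl(w-(1-\rho_+)\bigr)^2=0$ for $S$ and to $(w-\rho_+)^2=0$ for $\overline{S}$, so in both cases one is at a \emph{double} saddle. A direct computation gives $f_\varepsilon'''(w^*)\approx -2t/\chi_+$, which forces the microscopic rescaling $w = w^* + \varepsilon^{1/2}\chi_+^{1/3}\tilde w$. Taylor expanding $f_\varepsilon$ in powers of $\tilde w$, the cubic term collapses to $-\bt\tilde w^3/3$; the $\varepsilon^{-1}\bx$ corrections in $n_+$ and $x_2-x_1$ feed into the otherwise-vanishing quadratic and produce $+\bx\tilde w^2$ for $S_{-t,-n_+}$ and $-\bx\tilde w^2$ for $\overline{S}_{-t,n_+}$ -- this sign flip, coming from the fact that the curvatures at the two saddles are opposite, is what accounts for the $\bS_{-\bt,-\bx}$ on the second line of \eqref{eqn:103}; the remaining $\varepsilon^{-1/2}$-scale pieces in $n_+$, $x_1$, $x_2$ combine to produce the linear $(\bv-\bu)\tilde w$ contribution. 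Together with the Jacobian $dw = \varepsilon^{1/2}\chi_+^{1/3}d\tilde w$, the explicit prefactor $\varepsilon^{-1/2}\chi_+^{-1/3}(1-\rho_+)$ is precisely the normalization required to turn the rescaled integral into $\bS_{-\bt,\pm\bx}(\bv,\bu)$.

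To upgrade pointwise convergence to convergence of the full contour integral, I would deform $\Gamma_0$ so that near $w^*$ it follows the steepest-descent rays $w^* + re^{\pm i\pi/3}$ (these produce exactly the $C_1^{\pi/3}$ contour appearing in $\bS$ after rescaling) and then close it back to a loop around $w=0$ that avoids $w=1$. The principal technical obstacle is the global tail bound: one needs $\mathrm{Re}\,f_\varepsilon \leq -c\varepsilon^{-3/2}|\tilde w|^3$ uniformly along the deformed contour, so that the contribution from outside an $\varepsilon^{1/2-\kappa}$-neighborhood of $w^*$ is super-exponentially small and dominated convergence applies. This reduces to checking the monotonicity/convexity properties of $\mathrm{Re}\log\bigl((1-w)/w\bigr)$ along the deformed contour, which is exactly the estimate carried out in \cite{MQR2021} at density $\tfrac12$ and which transfers verbatim once $\tfrac12$ is replaced by $\rho_+$.
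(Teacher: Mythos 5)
Your proposal follows the same steepest-descent route the paper uses: the paper's proof of this proposition simply defers to Proposition~\ref{mainconvergencetheoremleft}, whose proof (Theorem~\ref{thm:convergenceandbound} in the appendix) carries out exactly the analysis you sketch — double saddles at $w^*=1-\rho_+$ (for $S$) and $w^*=\rho_+$ (for $\overline S$), the $\varepsilon^{1/2}\chi_+^{1/3}$ rescaling, the sign flip on the $\bx\tilde w^2$ term coming from how the $\varepsilon^{-1}\bx$ pieces of $n_+$, $t$, and $x_2-x_1$ recombine, and a contour deformation onto $\pi/3$ rays plus an arc with a global decay bound. The only quibble is your parenthetical attribution of the sign flip to ``opposite curvatures at the two saddles''; at a double saddle the second derivative of the leading-order exponent vanishes identically, and, as you in fact note in the same sentence, the quadratic term is generated by the $O(\varepsilon^{-1})$ sub-leading corrections, not by any intrinsic curvature — the paper just observes the sign flip directly in the Taylor expansions of $F_1$ and $F_2$.
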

We also have a similar proposition for $S^{\mathrm{epi},\varepsilon}$.
\begin{equation} \label{epiconvergenceright}
\begin{split}
&\varepsilon^{-1/2}\chi_+^{-1/3}(1-\rho_+)\overline{S}_{-t,n_+}^{\mathrm{epi}(X^{l+rl}_0)}(x_1,x_2) \to \bS_{-\bt,-\bx}^{\mathrm{epi}(-\mathfrak{h}_0^{r,-})}(\bv,\bu)
\end{split}
\end{equation}
where $\mathfrak{h}_0^{r,-}$ are independent Brownian motion starting from $\bv$.

\begin{proof}
The proof of the convergence of \eqref{eqn:103} is exactly the same as Proposition \eqref{mainconvergencetheoremleft}. Here we discuss the convergence in \eqref{epiconvergenceright}.
The difference from Proposition \eqref{epiconvergenceleft} is that the initial condition contains two parts: $X_0^l$, which is a Geo($\rho_-$) random walk; $X_0^{rl}$, which is a Geo($\rho_+$) random walk. Also, for any fixed $\varepsilon$, the particles on the left of $X_0(n^{rr})$ are not independent of $N_r$. However, both problems do not matter since the hitting probability cares only about particles around $X_0(n^{rr})$ on the $\varepsilon^{-1}$ scale. More precisely, recall 
\begin{equation}
\overline{S}_{-t,n_+}^{\mathrm{epi}(X^{l+rl}_0)}(x_1,x_2) = \sum_{k=0}^{n_+}\sum_{B(\tau)>X_0(s+1)} \mathbb{P}_{B(0)=x_1}(\tau =k, B(\tau) = b\varepsilon^{-1/2})\overline{S}_{-1,n_+-s}(B(\tau),x_2)
\end{equation}
Fix $d >0$, for any $k>\varepsilon^{-1}d$, 
\begin{equation}\label{tailprobabilitytracenorm}
\begin{split}
 &||\sum_{k=\varepsilon^{-1}d}^{n_+}\sum_{b\varepsilon^{-1/2}>X_0(k+1)} \mathbb{P}_{B(0)=x_1}(\tau =k, B(\tau) = b\varepsilon^{-1/2})\overline{S}_{-1,n_+-k}(B(\tau),x_2)||_{tr}\\
 &< \mathbb{P}_{B(0)=x_1}(\text{no hit in } \varepsilon^{-1}d \text{ steps } )\sup_{k,B(\tau)}||\overline{S}_{-1,n_+-k}(B(\tau),x_2)||_{L^2}
\end{split}
\end{equation}
The supremum is finite since $||\overline{S}_{-1,n_+-k}(B(\tau),z_2)||_{L^2}$ is uniformly bonded. Thus, in the limit as $\varepsilon \to 0, d\to \infty$, the trace norm of \eqref{tailprobabilitytracenorm} goes to $0$. 

Now, look at the sum from $0$ to $\varepsilon^{-1}d$. By our scaling, the expectation is over a geometric random walk starts at $\varepsilon^{-3/2}(\rho_+-\rho_-)\bt-\frac{N_r+N_l}{\rho_+-\rho_-}+\varepsilon^{-1/2}\bv$, jumps to the left with the step size being Geo$(\rho_+)$. By our splitting of the formula,  $X^{l+rl}_0(1)$ is $X_0( n_{rr})$. The place of $X_0(n_{rr})$ on order greater than or equal to $\varepsilon^{-3/4}$ is $$(\rho_+-\rho_-)\varepsilon^{-3/2}\bt-\frac{N_r+N_l}{\rho_+-\rho_-},$$ that matches the starting place of the random walk. Then we want to determine the limiting distribution of $Y = (X_0(n_{rr}+1),X_0(n_{rr}+2),\cdots,X_0(n_{rr}+d\varepsilon^{-1}))$ given the value of $N_r$. Notice $N_r$ depends only on the last element $X_0(n_{rr}+d\varepsilon^{-1})$, we can see that they are asymptotically independent, since for any $a,b\in\mathbb{R},\delta_1,\delta_2>0$,\begin{equation}
\lim_{\varepsilon\to 0}\mathbb{P}(\varepsilon^{3/4}N_r \in (a,a+\delta_1)|\varepsilon^{1/2}X(n_{rr}+d\varepsilon^{-1}) \in (b,b+\delta_2)) =\lim_{\varepsilon \to 0}\mathbb{P}(\varepsilon^{3/4}N_r \in (a,a+\delta_1)),
 \end{equation} 
 which is true since a.s., $\lim_{\varepsilon \to 0}\varepsilon^{3/4}X_0(n_{rr}+d\varepsilon^{-1}) = 0$. So using the convergence result from equation \eqref{eqn:103} and the Donsker theorem, 
\begin{equation}
\mathbb{E}_{B_0=x_1}[\overline{S}_{-t,n-\tau}(B_\tau,x_2)1_{\tau<\varepsilon^{-1}s}]\to \mathbb{E}_{\bB_0=\bv}[\bS_{\bt,\bx-\tau}(\bB(\btau,\bu)1_{\btau<d})].
\end{equation}
The expectation is over a Brownian motion hitting another Brownian motion which is $v$ distance apart at $0$. Then sending $d\to \infty$, we get the desired result.
\end{proof}

\begin{proposition}\label{rightpiececonvergence}
Let $z_1= a+\varepsilon^{-1/2}\chi_-^{-1/3}(1-\rho_+)\bu,z_3 =\varepsilon^{-3/2}(\rho_--\rho_+)\bt-\frac{N_r+N_l}{\rho_+-\rho_-}+\varepsilon^{-1/2}\chi_-^{-1/3}(1-\rho_-)\bv $
\begin{equation}
\sum_{k=0}^{n_l-n_{ll}-1}\sum_{z_3\leq X^l_0(n_{lr})}R_tQ^{-n_{ll}-k}(z_1,\delta(X_0(n-n_{ll}-k))h^{n_{lr}}_k(0,z_3) \to (\bS_{-\bt,\tilde{\bx}_-}^{\mathrm{epi}(-\mathfrak{h}_0^{l,+})})^*(\bu,\bv)
\end{equation}
\end{proposition}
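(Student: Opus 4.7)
The strategy parallels that of Proposition \ref{epiconvergenceleft}, with the walker and profile roles interchanged. Denote the left-hand side by $\mathcal{K}_\varepsilon(z_1,z_3)$; since the right-hand side equals $(\bS^{\mathrm{epi}(-\mathfrak{h}_0^{l,+})}_{-\bt,\tilde{\bx}_-})^*(\bu,\bv)=\bS^{\mathrm{epi}(-\mathfrak{h}_0^{l,+})}_{-\bt,\tilde{\bx}_-}(\bv,\bu)$, the goal is to establish convergence with $z_1,z_3$ playing the roles of $\bu,\bv$ respectively. In the regime $z_3\leq X^{lr}_0(n_{lr})$ the identity $h^{n_{lr}}_k(0,z_3)=\mathbb{P}_{B^*(-1)=z_3}(\tau^{0,n_{lr}}=k)$ from Section~2 holds, so one may rewrite
\begin{equation*}
\mathcal{K}_\varepsilon(z_1,z_3)=\mathbb{E}_{B^*(-1)=z_3}\bigl[\,R_tQ^{-n_{ll}-\tau}\bigl(z_1,\,X^{lr}_0(n_{lr}-\tau)\bigr)\,\mathbf{1}_{\tau<n_{lr}}\bigr],
\end{equation*}
where $B^*$ is a right-going Geo$(\rho_-)$ walk and $\tau$ is its first overshoot of the profile $X^{lr}_0$. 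This is structurally identical to the defining expectation for $\bS^{\mathrm{epi}}$, once one observes that the overshoot $B^*(\tau)-X^{lr}_0(n_{lr}-\tau)$ has mean $O(1)$ and is therefore negligible at the $\varepsilon^{1/2}$ scale, so that in the limit the second argument of $\bS_{-\bt,\cdot}$ is exactly the walker's hitting position.

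I would then combine three ingredients, each already developed in the paper. First, Proposition \ref{mainconvergencetheoremleft} (after using the relation between $R_tQ^{-k}$ and $S_{-t,-k}$ from \eqref{formulaSandSbar}) gives, for each fixed $\btau$, the pointwise convergence of the Psi-like factor to $\bS_{-\bt,\tilde{\bx}_--\btau}\bigl(\bB(\btau),\bu\bigr)$; the shift by $-\btau$ in the space variable arises because the extra $\varepsilon^{-1}\btau$ in the exponent of $Q^{-n_{ll}-\lfloor\varepsilon^{-1}\btau\rfloor}$ translates, under the scaling $n_{ll}=\rho_-^2\varepsilon^{-3/2}\bt+\rho_-\rho_+\varepsilon^{-1}\bx-\cdots$, into a shift of $\bx$ absorbed into $\tilde{\bx}_-$. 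Second, Donsker's theorem applied to $B^*$ gives convergence of the rescaled walk to a Brownian motion of variance $2$ started at $\bv$; applied to the profile $X^{lr}_0$ it produces an independent Brownian motion, which we identify as $\mathfrak{h}_0^{l,+}$. Independence from $\mathfrak{h}_0^{l,-}$ (the limit relevant to Proposition \ref{epiconvergenceleft}) follows because these two profiles are built from disjoint blocks of the Bernoulli$(\rho_-)$ initial configuration, and the minus sign in $-\mathfrak{h}_0^{l,+}$ appears through the sign conventions used in the rescaling of the discrete profile, exactly as in Proposition \ref{epiconvergenceleft}. Third, Proposition \ref{hittingprobabilityconvergence} promotes these marginal Donsker limits to joint convergence of rescaled hitting times, $\btau^\varepsilon\to\btau$, where $\btau$ is the first hit of $\bB$ with $\mathrm{epi}(-\mathfrak{h}_0^{l,+})$.

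Combining the three ingredients inside the expectation produces the claimed limit. The main obstacle is justifying the interchange of limit and expectation. As in the proof of Proposition \ref{prop:mainconvergencetheoremright}, I would split the sum at $k\leq\varepsilon^{-1}d$ and bound the tail $k>\varepsilon^{-1}d$ in trace norm by $\mathbb{P}_{B^*(-1)=z_3}(\text{no hit in } \varepsilon^{-1}d \text{ steps})$ times the uniform bound on the remaining $R_tQ^{-n_{ll}-k}$ factor (supplied by the trace-norm estimates of the appendix), then send $d\to\infty$ after $\varepsilon\to 0$; the former probability converges to the probability that the limiting BM-against-BM process fails to cross within time $d$, which vanishes as $d\to\infty$ whenever $\bv<0$. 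A secondary subtlety is the asymptotic independence of the $O(\varepsilon^{-1})$ window of $X^{lr}_0$ around $X^{lr}_0(n_{lr})$ from the macroscopic shifts $N_l,N_r$ and from $\mathfrak{h}_0^{l,-}$: conditionally on the macroscopic Bernoulli counts that determine $N_l,N_r$, the empirical fluctuations of the $\varepsilon^{-1}$-window are asymptotically unconditioned, by exactly the argument used at the end of the proof of Proposition \ref{prop:mainconvergencetheoremright}.
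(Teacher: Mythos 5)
Your argument is correct and runs close to the paper's, but it is organized differently, and the difference is worth pointing out. The paper does not take the limit directly inside the hitting expectation. Instead it factors $R_tQ^{-n_{ll}-k}=R_tQ^{-n_{ll}-c\varepsilon^{-1}}\cdot Q^{c\varepsilon^{-1}-k}$ for a fixed macroscopic cutoff $c$, so that the first factor converges on its own by Proposition~\eqref{mainconvergencetheoremleft} to $(\bS_{-\bt,\tilde{\bx}_--c})^*$, while the inner sum $\sum_{k<c\varepsilon^{-1}}Q^{c\varepsilon^{-1}-k}\bigl(z_2,X^{lr}_0(n_{lr}-k)\bigr)h^{n_{lr}}_k(0,z_3)$ is, verbatim, the transition density of the right-going walk from $z_3$ to $z_2$ in $c\varepsilon^{-1}$ steps \emph{having hit the profile in between}, which converges by Donsker to $(\bS^{\mathrm{epi}(-\mathfrak{h}^{l,+})}_{[0,c]})^*$. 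The two pieces are then recomposed via~\eqref{eqn:sepiseconddef}, $\bS^{\mathrm{epi}}_{\bt,\bx}=\lim_{c\to\infty}\bS^{\mathrm{epi}}_{[0,c]}\bS_{\bx-c}$, and the $k\ge c\varepsilon^{-1}$ tail is discarded just as you do. The point of this decomposition is that by using the memoryless property of the geometric step to write $(Q^*)^{n-k}(X_0(n-k),\cdot)$ as a genuine transition density \emph{through} the overshoot, the paper never has to say that $B^*(\tau)$ and $X^{lr}_0(n_{lr}-\tau)$ agree to $o(\varepsilon^{-1/2})$: it simply never compares them. Your version evaluates the remaining kernel at the profile point $X^{lr}_0(n_{lr}-\tau)$ and must then argue that the geometric overshoot is $O(1)$ and hence negligible against the $\varepsilon^{-1/2}$ fluctuation scale, which is true and a perfectly adequate substitute.

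Two smaller remarks. First, your invocation of the $\bS_{-\bt,\tilde{\bx}_--\btau}$ shift is the right heuristic, but the bookkeeping in the paper is slightly different: rather than absorbing the shift into the $\bx$-argument, the paper shifts the intermediate variable $z_2$ by $c\varepsilon^{-1}/\rho_-$ to cancel the mean drift of the walk, which is what makes $Q^{c\varepsilon^{-1}-k}$ converge to a centered Gaussian/Brownian transition rather than a drifted one. If you carry out your version carefully you will need an equivalent cancellation. Second, your comment about independence of $\mathfrak{h}^{l,+}$ from $\mathfrak{h}^{l,-}$ (and from $N_l,N_r$) is exactly the paper's argument in Proposition~\eqref{prop:mainconvergencetheoremright}: the $O(\varepsilon^{-1})$ window of increments near $X_0(n_r+n_{lr})$ is asymptotically decoupled from the macroscopic counts, and the $ll$ and $lr$ blocks are literally disjoint. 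So that part is fine. Overall, your proof would work; it is a valid alternative packaging of the same ideas, trading the composition identity~\eqref{eqn:sepiseconddef} for an explicit overshoot estimate.
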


\begin{proof}
Left-hand side equals to 
\begin{equation}
 \lim_{c \to \infty}R_tQ^{-n_{ll}-c\varepsilon^{-1}}(z_1,z_2)(\sum_{k=0}^{c\varepsilon^{-1}-1}+\sum_{k=c\varepsilon^{-1}}^{n_l-n_{ll}-1})\sum_{z_3\leq X^l_0(n_{lr}),z_2}Q^{c\varepsilon^{-1}-k}(z_2,\delta(X_0(n-n_{ll}-k))h^{n_{lr}}_k(0,z_3)
 \end{equation} 
We will prove that in the limit of $\varepsilon \to 0$, the first sum converges to 
\begin{equation}\label{eqn:116}
\lim_{c\rightarrow \infty}(\bS_{-\bt,\tilde{\bx}_--c})^*(\bS_{[0,c]}^{\mathrm{epi}(-\mathfrak{h}^{l,+})})^*
\end{equation}
which is equal to $(\bS_{-\bt,\tilde{\bx}_-}^{\mathrm{epi}(-\mathfrak{h}_0^{l,+})})^*$ by equation \eqref{eqn:sepiseconddef}. We scale the variable $z_2$ to \[z_2 = \varepsilon^{-3/2}(\rho_--\rho_+)\bt-\frac{N_r+N_l}{\rho_+-\rho_-}+\frac{c\varepsilon^{-1}}{\rho_-}+\varepsilon^{-1/2}\chi_-^{-1/3}(1-\rho_-)\bv'.\] The extra $\frac{c\varepsilon^{-1}}{\rho_-}$ is used to cancel the shift from $c\varepsilon^{-1}$ on the exponent of $Q$. From Proposition \eqref{mainconvergencetheoremleft},
\begin{equation}\label{eqn:117}
    R_tQ^{-n_{ll}-c\varepsilon^{-1}}(z_1,z_2)=S_{t,-n_{ll}-c\varepsilon^{-1}} \to (\bS_{-\bt,\tilde{\bx}_--c})^*(\bu,\bv')
\end{equation}
    For each fixed $c$, when $z_3\leq X^l_0(n_{lr})$, 
    \begin{equation}
    \sum_{k=0}^{c\varepsilon^{-2/2}-1}Q^{c\varepsilon^{-2/2}-k}\delta_{x_0(N_2-n_s-k)}(z_2)h^{N_2-n_s}_{k}(0,z_3)
    \end{equation}
    is the probability that the Geo($\rho_-$) random walk starts at $z_3$ at time $-1$, end up at $z_2$ after $c\varepsilon^{-2/2}$ steps, having hit the curve $X^l_0(n_{lr})$ in between. Notice that the term $\frac{c\varepsilon^{-1}}{\rho_-}$ is the mean of the walk. Both the initial condition and the random walk scale diffusively, and they have the same mean, thus, by the Donsker theorem, it converges to $(\bS^{\mathrm{epi}(-\mathfrak{h}^{l,+})}_{[0,c]})^*(\bv',\bv)$. Combining with equation \eqref{eqn:117}, we proved \eqref{eqn:116}. Now we prove that the rest of the sum goes to zero. The rest of the sum is
    \begin{equation}
    (S_{-t,-n_l})^*\sum_{k=c\varepsilon^{-2/2}}^{n_l-n_{ll}-1}Q^{n_l-n_{ll}-1-k}\delta_{x_0(N_2-n_s-k)}(z_1)h^{n_{lr}}_{k}(0,z_3).
    \end{equation}
    The sum is the probability that the walk starts at $z_3$ at time $-1$, end up at $z_1$ after $N_2-n_s$ steps, having first hit the curve among $\varepsilon^{-1}c$ to $n_l-n_{ll}-1$ steps. This probability is less than the probability that the random walk does not hit in first $\varepsilon^{-1}c$ steps, which in the limit converge to the probability that the Brownian does not hit another Brownian motion which are finite distance apart at $t=0$, on interval $[0,s]$. This probability goes to $0$ as $s\to \infty$.
    \end{proof}

\begin{proposition}\label{prop:qqbarsameinlimit}Under the scaling in Proposition \eqref{mainconvergencetheoremleft},
\begin{equation}
\begin{split}
||Q^{n_{ll}}R^{-1}_t-\overline{Q}^{n_{ll}}R^{-1}_t||_{tr}\to 0
\end{split}
\end{equation}
Using this fact, we derive the scaling formula for $Q^{n_{ll}}R_t^{-1}$, 
\begin{equation}
    Q^{n_{ll}}R^{-1}_t \to \bS_{-\bt,-\tilde{\bx}_-}.
\end{equation}
\end{proposition}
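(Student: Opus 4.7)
The proposition contains two claims: the trace-norm estimate $\|Q^{n_{ll}} R_t^{-1} - \overline{Q}^{(n_{ll})} R_t^{-1}\|_{tr} \to 0$ and the consequent scaling limit $Q^{n_{ll}} R_t^{-1} \to \bS_{-\bt,-\tilde{\bx}_-}$. The plan is to establish the first by a contour deformation followed by a saddle-point estimate at the ``wrong'' critical point, and then obtain the second by comparing with a direct steepest-descent computation of $\overline{Q}^{(n_{ll})} R_t^{-1}$ that parallels the argument in Proposition \ref{mainconvergencetheoremleft}.

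The starting point is that $Q^{n_{ll}}(x,y)$ and $\overline{Q}^{(n_{ll})}(x,y)$ agree on $\{x-y \geq 1\}$, so their difference is supported on $\{x \leq y\}$, where $\overline{Q}^{(n_{ll})}$ is simply the polynomial-in-$y$ extension of the binomial formula for $Q^{n_{ll}}$. In the contour representation
\[ \overline{Q}^{(n_{ll})}(x,y) = \frac{1}{2\pi i}\int_{\Gamma_0} \frac{(1+v)^{x-y-1}}{\rho_-^{n_{ll}}(1-\rho_-)^{x-y-n_{ll}} v^{n_{ll}}}\,dv, \]
the passage from $Q^{n_{ll}}$ to $\overline{Q}^{(n_{ll})}$ on $\{x \leq y\}$ amounts to also enclosing the pole of $(1+v)^{x-y-1}$ at $v = -1$. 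Composing with $R_t^{-1}$ on the right contributes a factor $e^{t(w-1)}$ (after the change of variables $w = 1 - v$) together with a shift, giving a single-contour representation for $(\overline{Q}^{(n_{ll})}-Q^{n_{ll}}) R_t^{-1}$ as an integral around $v = -1$, with integrand of the form $e^{t\Phi(w)}\cdot(\text{algebraic})$, where $\Phi$ is exactly the phase function whose steepest descent controls Proposition \ref{mainconvergencetheoremleft}. Since $v = -1$ corresponds to the secondary (non-dominant) critical point where $\Re\Phi < 0$, the standard saddle-point estimate yields a pointwise bound of order $e^{-c\varepsilon^{-3/2}}$ on the difference kernel after the $(1-\rho_-)^{x-y}$ conjugation, uniformly in $(x,z)$ on the cutoff region $\{x,z \leq a\}$.

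From this pointwise estimate, the trace-norm bound follows by factoring the difference into two operators and applying $\|AB\|_{tr}\leq\|A\|_2\|B\|_2$, exactly as in the appendix argument giving uniform trace-norm bounds on $K^l$ and $K^r$; the superpolynomial pointwise decay absorbs the $O(\varepsilon^{-1/2})$ volume of the cutoff region. The second assertion then follows from a parallel steepest-descent computation on the single-contour representation of $\overline{Q}^{(n_{ll})} R_t^{-1}$, which is structurally identical to the analysis of $\overline{S}_{-t,n_-}$ in Proposition \ref{mainconvergencetheoremleft} and yields the limit $\bS_{-\bt,-\tilde{\bx}_-}$ after matching constants to the scaling parameters $\tilde{\bx}_-$ and $\tilde{\bs}_-$; combining with the trace-norm convergence of the difference produces the stated limit for $Q^{n_{ll}}R_t^{-1}$. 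The main obstacle I expect is the exponential bookkeeping: on $\{x \leq y\}$ the bare kernel $\overline{Q}^{(n_{ll})}$ grows polynomially of degree $n_{ll}-1$ in $y-x$ and carries a geometric factor $(1-\rho_-)^{x-y-n_{ll}}$ that blows up in $y-x$, so under $n_{ll} \sim \varepsilon^{-3/2}$ one must verify that the strictly negative $\Re\Phi$ at the secondary critical point, combined with the cancellation from the conjugation and from the $R_t^{-1}$ factor, actually dominates these bare estimates. Tracking the three exponential rates through the contour deformation is the delicate step.
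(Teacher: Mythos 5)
Your proposal has the right starting idea: the difference $\overline{Q}^{(n_{ll})} - Q^{n_{ll}}$ is precisely the contribution from the pole at $v=-1$ in the contour representation, and the second half of the proposition follows by combining the trace-norm vanishing of this difference with the $\overline{S}_{-t,n_-}$ convergence from Proposition~\ref{mainconvergencetheoremleft}. That much agrees with the paper. But your execution of the first half is heavier and a bit imprecise where the paper is explicit. The paper writes $(Q^{n_{ll}}-\overline{Q}^{n_{ll}})R_t^{-1}$ as a double contour integral, evaluates the $v$-residue at $0$, and then takes the remaining $w$-residue at $w=1$, arriving at an \emph{explicit rank-one kernel} $(\tfrac{\rho}{1-\rho})^{n-1}e^{-t(1-\rho)}(1-\rho)^{z_1-z_3}$. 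Its trace norm is simply a product of $L^2$ norms times an exponentially small prefactor under the scaling $t\sim\varepsilon^{-3/2}$, $n_{ll}\sim\rho_-^2\varepsilon^{-3/2}\bt$; no steepest descent and no HS factorization are needed for this error term. You instead invoke ``a standard saddle-point estimate'' at $v=-1$ and describe that point as a ``secondary critical point'' — but $v=-1$ is a pole of the integrand, not a critical point of the phase, and the contribution is a residue evaluation rather than a Laplace/steepest-descent asymptotic. More substantively, your argument that the ``superpolynomial pointwise decay absorbs the $O(\varepsilon^{-1/2})$ volume of the cutoff region'' is not quite right: the cutoff region $(-\infty,a]$ has infinite measure, so a uniform pointwise bound $e^{-c\varepsilon^{-3/2}}$ is not on its own sufficient for trace-norm smallness — one needs the spatial decay of the kernel, which is supplied automatically by the paper's explicit rank-one form with its geometric factors in $z_1$ and $z_3$. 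The conclusion is reachable your way with more bookkeeping (as you anticipate at the end), but the residue computation is the shortcut the paper uses.
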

\begin{proof}
Recall from equation (\ref{formulaSandSbar})
\begin{equation}
(\overline{Q}^{(n)}e^{(1-\rho)t\nabla^-})(z_1,z_2)= \frac{1}{2\pi i}\int_{\Gamma_0}dw (\frac{\rho}{1-\rho})^{n-1}\frac{(1-w)^{z_2-z_1+n-1}}{(1-\rho)^{z_2-z_1}w^n}e^{t(w-\rho)}\\
\end{equation}
Now we compute the $Q^n e^{\frac{t}{2}\nabla^-}$, it is
\begin{equation}
\begin{split}
Q^{n}e^{(1-\rho)t}(z_1,z_3)-\int_{\Gamma_0} dv\int_{\Gamma_0^C} dw (\frac{\rho}{1-\rho})^{n-1}\frac{-e^{-t(w-1/2)}}{(1-\rho)^{z_3-z_1}v^n(w+wv-1)w^{z_1-z_3}}
\end{split}
\end{equation}
where both the contour $\Gamma_0,\Gamma_0^C$ is a circle around $0$ such that $|w|=C,C>1, |v|<\varepsilon$, so contour for $v$ only has residues at $v=0$, which is :
\begin{equation}
\int dw (-1)^n(\frac{\rho}{1-\rho})^{n-1}\frac{e^{-t(w-\rho)}w^{n+z_3-z_1}}{(1-\rho)^{z_3-z_1}(w-1)}
\end{equation}
$n+z_3-z_1$ is positive for small enough $\varepsilon$, thus we only need to consider the residue at $w=1$, which is 
\begin{equation}
(\frac{\rho}{1-\rho})^{n-1}\frac{e^{-t(1-\rho)}}{(1-\rho)^{z_3-z_1}}.
\end{equation}
Recall the scaling of the variables; the leading term for $t$ is $\varepsilon^{-3/2}$, the leading term of $\frac{1}{(1-\rho)}$ is $(1-\rho)\varepsilon^{-3/2}$, thus the operator goes to $0$ in the trace norm as $\varepsilon \to 0$. Now using the convergence result about $\overline{Q}^{n_{ll}}R_t^{-1}$, we have the proposition.
\end{proof}
\begin{proposition}\label{prop:wrongconjuazero} Under the scaling of Proposition \eqref{mainconvergencetheoremleft},
\begin{equation}
||\overline{1}^a(S^{\rho_+}_{-t,-n_{l}})^*\overline{S}^{\rho_+,\mathrm{epi}(X^{l}_0)}_{-t,n_{l}}\overline{1}^a||_{tr} \to 0
\end{equation}
\end{proposition}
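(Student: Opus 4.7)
The plan is to carry out a steepest-descent analysis on the contour integrals defining $S^{\rho_+}_{-t,-n_l}$ and $\overline{S}^{\rho_+}_{-t,n_l}$ and to exploit the fact that, under the present scaling $n_l\sim\rho_-\rho_+ t$ and $a\sim(1-\rho_--\rho_+)t$, the density-$\rho_+$ conjugation built into $S^{\rho_+}$, $\overline{S}^{\rho_+}$ does not match the effective density $\rho_-$ of the initial data $X_0^l$. This mismatch forces the kernel entries to carry an exponentially small prefactor $e^{tH(\rho_-,\rho_+)}$ with $H<0$, which will dominate any polynomial volume factor coming from the projection $\overline{1}^a$.

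Concretely, write the integrand of $S^{\rho_+}_{-t,-n_l}(z_1,z_2)$ as $e^{tf(w)}$ times a rational factor, where
\[
f(w)=w-(1-\rho_+)+\tfrac{n_l}{t}\log\tfrac{(1-w)(1-\rho_+)}{w\rho_+}-\tfrac{z_2-z_1}{t}\log w.
\]
Since $n_l/t\to\rho_-\rho_+$ and $z_2-z_1=O(\varepsilon^{-1/2})$ on the projection, the leading-order critical-point equation reduces to $w_*(1-w_*)=\rho_-\rho_+$, whose root inside $\Gamma_0$ is $w_*=\tfrac12(1-\sqrt{1-4\rho_-\rho_+})\neq\rho_+$. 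Evaluating $f$ at $w_*$ yields a leading constant $H(\rho_-,\rho_+)$ that is strictly negative whenever $0<\rho_-<\rho_+<1$: one verifies this by noting that $H(\rho,\rho)=0$ (all logs collapse when $w_*=\rho$) with $\partial_{\rho_+}H|_{\rho_+=\rho_-}=0$ and that $H$ is strictly concave in $\rho_+$, or equivalently by recognizing $-tH$ as a hydrodynamic large-deviation rate for reaching $a$ under the wrong density. A standard steepest-descent deformation of $\Gamma_0$ through $w_*$ (avoiding the pole at $w=1$, as in Proposition~\eqref{mainconvergencetheoremleft}) then gives, uniformly in the $O(\varepsilon^{-1/2})$ scaling window,
\[
|S^{\rho_+}_{-t,-n_l}(z_1,z_2)|+|\overline{S}^{\rho_+}_{-t,n_l}(z_1,z_2)|\le C\,e^{tH(\rho_-,\rho_+)}\,p(\varepsilon^{-1}),
\]
with $p$ polynomial; the tail regions $|z_i-a|\gg\varepsilon^{-1/2}$ inside $(-\infty,a]$ are treated by the same method with a cruder contour deformation, re-using the uniform trace-norm estimates announced after Theorem~\eqref{abconvergence}.

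For the epi-factor $\overline{S}^{\rho_+,\mathrm{epi}(X_0^l)}_{-t,n_l}(z_1,z_2)=\mathbb{E}_{B(0)=z_1}\bigl[\overline{S}^{\rho_+}_{-t,n_l-\tau}(B(\tau),z_2)\mathbf{1}_{\tau<n_l}\bigr]$, I split into $\tau<(1-\delta)n_l$ and $\tau\ge(1-\delta)n_l$. On the first range the base-kernel bound applies with $n_l$ replaced by $n_l-\tau$ and still produces exponential decay of the same order, since $(n_l-\tau)/t$ stays bounded away from the critical value $\chi_+$; on the second range, the Geo$(\rho_+)$ walk has mean step $1/\rho_+<1/\rho_-$, so the probability of hitting the independent Geo$(\rho_-)$ curve only after $(1-\delta)n_l$ steps is bounded by $e^{-cn_l}$ via a standard large-deviation estimate. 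Combining, $|\overline{S}^{\rho_+,\mathrm{epi}(X_0^l)}_{-t,n_l}(z_1,z_2)|\le Ce^{tH'(\rho_-,\rho_+)}p(\varepsilon^{-1})$ for some $H'<0$. The trace norm of the composition is then controlled via $\|AB\|_{\mathrm{tr}}\le\|A\|_{\mathrm{HS}}\|B\|_{\mathrm{HS}}$, each Hilbert--Schmidt norm being the pointwise bound times a polynomial volume factor in $\varepsilon^{-1}$; the resulting estimate $O(\varepsilon^{-N}e^{-c\varepsilon^{-3/2}\bt})$ vanishes as $\varepsilon\to 0$.

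The main obstacle will be the uniform negativity of $H(\rho_-,\rho_+)$ (which degenerates on the diagonal $\rho_-=\rho_+$ and therefore needs a quantitative lower bound on $\rho_+-\rho_-$ coming from the hypothesis $\rho_-<\rho_+$) together with the tail estimates for $z_1,z_2$ far outside the $\varepsilon^{-1/2}$ window but still in $(-\infty,a]$; as in the appendix treatment of Theorem~\eqref{abconvergence}, these require splitting the contour integral into bulk and tail pieces and deforming separately to pick up sharp sub-exponential control.
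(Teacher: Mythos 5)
The paper's proof takes a genuinely different and far more economical route: it never analyzes the $\rho_+$-exponent at all. Instead it uses the identity
\begin{equation*}
\overline{1}^a(S^{\rho_+}_{-t,-n_{l}})^*\overline{S}^{\rho_+,\mathrm{epi}(X^{l}_0)}_{-t,n_{l}}\overline{1}^a
= \overline{1}^a M_{c_1}(S^{\rho_-}_{-t,-n_{l}})^*\overline{S}^{\rho_-,\mathrm{epi}(X^{l}_0)}_{-t,n_{l}} M_{c_2}\overline{1}^a,\qquad c_1=\tfrac{1-\rho_+}{1-\rho_-}<1,\ c_2=c_1^{-1},
\end{equation*}
then re-uses the uniform trace-norm bound for the $\rho_-$-kernel already established in Theorem~\eqref{thm:convergenceandbound}. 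The multipliers $c_1^{z_1}$, $c_2^{z_2}$ alter the geometric-sum denominators in the rank-one $L^2$ computations (compare~\eqref{eqn:133},\eqref{eqn:134} with~\eqref{eqn:166},\eqref{eqn:167}): a constant term $(1-c_1^2)>0$ replaces one that was $O(\varepsilon^{1/2})$, producing an overall extra factor $\varepsilon^{1/2}$ that forces the trace norm to $0$. Your plan, by contrast, attacks the mismatched exponent head-on and seeks pointwise exponential decay; even if it could be made to work, it essentially re-does the appendix analysis for a fresh exponent instead of exchanging the problem for one already solved.

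Moreover, as written your steepest-descent argument has a concrete error. The composed kernel is $(S^{\rho_+}_{-t,-n_l})^*\overline{S}^{\rho_+,\mathrm{epi}}_{-t,n_l}(z_1,z_2)=\sum_z S^{\rho_+}_{-t,-n_l}(z,z_1)\overline{S}^{\rho_+,\mathrm{epi}}_{-t,n_l}(z,z_2)$, so the argument difference entering each contour integral is $z_1-z$, where $z_1\le a\sim(1-\rho_--\rho_+)\varepsilon^{-3/2}\bt$ and the internal variable $z$ is pinned (through the hitting probability) near the particles of $X_0^l$, which live far from $a$ on the $\varepsilon^{-3/2}$ scale. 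Hence this difference is $O(\varepsilon^{-3/2})$, not $O(\varepsilon^{-1/2})$, and the $\tfrac{z_1-z}{t}\log w$ term survives in the leading-order saddle equation. Your reduction to $w_*(1-w_*)=\rho_-\rho_+$ and the real root $\tfrac12(1-\sqrt{1-4\rho_-\rho_+})$ is therefore incorrect; with the correct term included, the quadratic has discriminant proportional to $\rho_-(\rho_--\rho_+)<0$ (for $z$ near $(\rho_--\rho_+)\varepsilon^{-3/2}\bt$) or has two distinct real roots $1-\rho_\pm$ (for $z$ near $0$), so neither the location of the saddle nor the sign of the exponent there is established by what you wrote. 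The concavity/large-deviation justification for $H(\rho_-,\rho_+)<0$ is likewise unverified for the actual exponent, and the concluding Hilbert--Schmidt bound needs summable control over the middle variable $z$ ranging over all of $\Z$, not just a polynomial volume factor; the conjugation argument sidesteps every one of these issues.
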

\begin{proof}
From Theorem (\ref{thm:convergenceandbound}), we know 
\begin{equation}
||\overline{1}^a(S^{\rho_-}_{-t,-n_{l}})^*\overline{S}^{\rho_-,\mathrm{epi}(X^{l}_0)}_{-t,n_{l}}\overline{1}^a||_{tr}
\end{equation}
is uniformly bounded, so we write
\begin{equation}
\begin{split}
||\overline{1}^a(S^{\rho_+}_{-t,-n_{l}})^*\overline{S}^{\rho_+,\mathrm{epi}(X^{l}_0)}_{-t,n_{l}}\overline{1}^a||_{tr} &= ||\overline{1}^aM_{\frac{(1-\rho_+)}{1-\rho_-}}(S^{\rho_-}_{-t,-n_{l}})^*\overline{S}^{\rho_-,\mathrm{epi}(X^{l}_0)}_{-t,n_{l}}M_{\frac{(1-\rho_-)}{1-\rho_+}}\overline{1}^a||_{tr}
\end{split}
\end{equation}
Define $c_1 = \frac{1-\rho_+}{1-\rho_-}<1,c_2=\frac{1-\rho_-}{1-\rho_+}>1$.
Recall the definition of $\overline{S}^{\rho_-,\mathrm{epi}(X_0)}_{-t,n_l}$,
\begin{equation}\label{tracenormformula}
 \begin{split}
 &\overline{1}^aM_{c_1}(S^{\rho_-}_{-t,-n_l})^*\overline{S}^{\rho_-,\mathrm{epi}(X_0^l)}_{-t,n_l}M_{c_2}\overline{1}^a(z_1,z_2)\\
 &=\int_{z\in \varepsilon\mathbb{Z},s\in[0,\varepsilon n)}\mathbb{P}_{B_\varepsilon(0)=z}(\tau_\varepsilon \in ds)c_1^{z_1}S^{\rho_-}_{-t,-n_l}(z,z_1)\overline{S}^{\rho_-}_{-t,n_l}(h_\varepsilon(s),z_2)c_2^{z_2}
 \end{split}
 \end{equation}
 where $h_\varepsilon(s)$ is the scaled random walk. Thus, 
 \begin{equation}
 \begin{split}
 \lVert  &\overline{1}^aM_{c_1}(S^\varepsilon_{-1,x_1})^*\overline{S}^\varepsilon_{-1,x_2}M_{c_2}\overline{1}^a\rVert_{tr}\\
 &\leq \int_{z\in \varepsilon\mathbb{Z},s\in[0,\varepsilon n)}\mathbb{P}_{B_\varepsilon(0)=z}(\tau_\varepsilon \in ds)\lVert c_1^{z_1}\overline{1}^aS^{\rho_-}_{-t,-n_l}(z,z_1)\overline{S}^{\rho_-}_{-t,n_l}\overline{1}^a(h_\varepsilon(s),z_2)c_2^{z_2}\rVert_{tr}
 \end{split}
 \end{equation}
 Since $\overline{1}^aS^{\rho_-}_{-t,-n_l}(z,z_1)\overline{S}^{\rho_-}_{-t,n_l}\overline{1}^a(h_\varepsilon(s),z_2)$ is a rank one operator, its trace norm is the product of $L^2$ norm,
 \begin{equation}
  \begin{split}
   &\lVert c_1^{z_1}\overline{1}^aS^{\rho_-}_{-t,-n_l}(z,z_1)\overline{S}^{\rho_-}_{-t,n_l}\overline{1}^a(h_\varepsilon(s),z_2)c_2^{z_2} \rVert_{1}\\
   &=(\int_{-\infty}^a(c_1^{z_1}S^{\rho_-}_{-t,-n_l}(z-z_1))^2dz_1\int_{\infty}^a (c_2^{z_2}\overline{S}^{\rho_-}_{-t,n_l}(h_\varepsilon(s)-z_2))^2dz_2)^{1/2}
  \end{split}
  \end{equation} 
\begin{multline}\label{eqn:133}
    \int_{-\infty}^adz_1 |S^{\rho_-}_{-t,-n_l}(z-z_1)|^2 =\\\frac{1}{(2\pi i)^2}\int_{C^0_\varepsilon}dw_1\int_{C^0_\varepsilon}dw_2 \varepsilon^{1/2}\frac{e^{F_1(t,x,z,\varepsilon,w_1)+
F_1(t,x,z,\varepsilon,w_2)}}{(1-c_1^2)+c_1^2\varepsilon^{1/2}w_1+c_1^2\varepsilon^{1/2}w_2-c_1^2\varepsilon^{1}w_1w_2}
\end{multline}

 where $C^0_\varepsilon$ is a circle centered at $\varepsilon^{-1/2}$ with radius $\varepsilon^{-1/2}$, with a little perturb at 0 so that 0 is outside the contour. Thus, the denominator will not be zero if $\varepsilon$ is small enough. From the proof in the Appendix, we know that \[||\frac{1}{(2\pi i)^2}\int_{C^0_\varepsilon}dw_1\int_{C^0_\varepsilon}dw_2 \frac{e^{F_1(t,x,z,\varepsilon,w_1)+
F_1(t,x,z,\varepsilon,w_2)}}{(1-c_1^2)+c_1^2\varepsilon^{1/2}w_1+c_1^2\varepsilon^{1/2}w_2-c_1^2\varepsilon^{1}w_1w_2}||_{tr}<\infty.\] Thus the extra $\varepsilon^{1/2}$ makes the trace norm vanish. For the other term, 
 \begin{equation}\label{eqn:134}
\begin{split}
     &\int_{-\infty}^adz_2 |\overline{S}^{\rho_-}_{-t,n_l}(h_\varepsilon(s)-z_2)|^2 \\
 &= \frac{1}{(2\pi i)^2}\int_{C_\varepsilon^0}dw_1\int_{C_\varepsilon^0}dw_2\varepsilon^{1/2}\frac{e^{F_1(t,x,h_\varepsilon(s),\varepsilon,w_1)+
F_1(t,x,h_\varepsilon(s),\varepsilon,w_2)}c_2^2(1+\varepsilon^{1/2}w_1)(1+\varepsilon^{1/2}w_2)}{c_2^2\varepsilon^{1/2}w_1+c_2^2\varepsilon^{1/2}w_2+c_2^2\varepsilon^{1}w_1w_2+(c^2-1)}
\end{split}
\end{equation}
For the same reason above, the trace norm of this part also goes to $0$.
\end{proof}

\section{The second class particle}
Let us first define the place of second class particle precisely. Let $\eta_0$ be the initial condition with $\eta_0(x)\sim \mathrm{Ber}(\rho_-)$ for $x<0$, $\eta_0(x) \sim \mathrm{Ber}(\rho_+)$ for $x>0$, and $\eta(x) = 0$. Let $\tilde{\eta}_0(x)=\eta_0(x)$ for $x\neq 0$ and $\tilde{\eta}_0(0)=1$. The particle at $0$ is the second class particle. We couple two initial conditions $\eta_0$ and $\tilde{\eta}_0$ using graphical construction. For more details on coupling, see \cite{liggett1999sip} \cite{Pablo18}. Notice that the only position where the two configurations are different at time $t$ is the position of the second-class particle. Thus, we have
\[X^{\mathrm{2nd}}(t) = \sum_{x\in \mathbb{Z}}x 1_{\eta_t(x)\neq \tilde{\eta}_t(x)}.\] Let $h(t,x)$ and $\tilde{h}(t,x)$ be the height functions associated with $\eta, \tilde{\eta}$, with the initial condition $h(0,0)=0$ and $\tilde{h}(0,0)=0$. Then we define the following modified version of height function:
\begin{equation}
\begin{aligned}
h^-(0,x)&=h(0,x)\mathbbm{1}_{x<0}+|x|\mathbbm{1}_{x\geq 0},\\
h^+(0,x)&=h(0,x)\mathbbm{1}_{x\geq 0}+|x|\mathbbm{1}_{x<0},\\
\tilde h^+(0,x)&=\tilde h(0,x)\mathbbm{1}_{x\geq 0}+|x|\mathbbm{1}_{x<0}=(h(0,x)-2)\mathbbm{1}_{x\geq 1}+|x|\mathbbm{1}_{x\leq 0}.
\end{aligned}
\end{equation}
$h^-(0,x)$ is the height function corresponding to the initial condition that all particles in the positive integer sites in $\eta_0$ are removed; $h^+(0,x),\tilde{h}^+(0,x)$, are the height functions corresponding to the initial condition that all particles in the negative integer sites in $\eta_0,\tilde{\eta}_0$ are occupied.
Now we can cite the following theorem from \cite{FN24}.
\begin{theorem}
Let us consider a generic initial condition $h(x,0)$ (it could be even random) satisfying the following three assumptions:
\begin{itemize}
\item[(a)] \emph{Initial macroscopic shock around the origin}: assume that for some $0<\rho_-<\rho_+<1$,
\begin{equation}
\lim_{x\to-\infty} x^{-1}h(x,0)=1-2\rho_-,\quad \lim_{x\to\infty} x^{-1}h(x,0)=1-2\rho_+.
\end{equation}
\item[(b)] \emph{Control of the end-point of the backwards geodesics} starting at $v_s t$ with $v_s=1-\rho_--\rho_+$: let $\bx^+$ (resp.\ $\bx^-$) any backwards geodesics starting at $(v_s t,t)$ with initial profile $\tilde h^+(\cdot,0)$ (resp.\ $h^-(\cdot,0)$). Then assume for some $\delta>0$, we have
\begin{equation}
\lim_{t\to\infty}\mathbb{P}(\bx^-(0)\leq -\delta t,\bx^+(0)\geq \delta t) = 1.
\end{equation}
\item[(c)] \emph{Limit process under KPZ scaling} of $\tilde h^+,h^-$ with \emph{constant law along characteristics}.  There exist limiting processes ${\mathcal H}^-(u)$ and ${\mathcal H}^+(u)$ whose distribution is continuous in $u$, such that
\begin{equation}\label{eq2.3}
\begin{aligned}
&\lim_{t\to\infty} \frac{h^-(t,v_s t+u t^{2/3})-{\bf h}^-(t,u)}{-t^{1/3}}={\mathcal H}^-(u),\\
&\lim_{t\to\infty} \frac{\tilde h^+(t,v_s t+u t^{2/3})-{\bf h}^+(t,u)}{-t^{1/3}}={\mathcal H}^+(u),
\end{aligned}
\end{equation}
where
\begin{equation}\label{eq2.4}
\begin{aligned}
{\bf h}^-(u,t)&=(1-\rho_--\rho_++2\rho_- \rho_+) t+ (1-2\rho_-) u t^{2/3},\\
{\bf h}^+(u,t)&=(1-\rho_--\rho_++2\rho_- \rho_+) t+ (1-2\rho_+) u t^{2/3}
\end{aligned}
\end{equation}
are what we expect macroscopically from the solution of the Burgers equation. Below we will use the notations
\begin{equation}
v_s=1-\lambda-\rho+2\lambda \rho,\quad \chi_-=\lambda(1-\lambda),\quad \chi_+=\rho(1-\rho).
\end{equation}
\end{itemize}

\label{thmMain}Let $\tau,s \in \R$.
Under the above assumptions, with $X^{\rm 2nd}(t)$ denoting the position of the second class particle at time $t$ starting from the origin, we have
\begin{equation}\label{eq3.8b}
\begin{aligned}
&\lim_{t\to\infty} \mathbb{P}\left(\Big\{X^{\rm 2nd}(t+\tau t^{2/3})\geq v_s (t+\tau t^{2/3}) + s t^{1/3}\Big\}\right) \\
&=\mathbb{P}\left( \{{\mathcal H}^-((\rho_--\rho_+)\tau)-{\mathcal H}^+((\rho_+-\rho_-)\tau)\geq 2(\rho_+-\rho_-)s\}\right),
\end{aligned}
\end{equation}
where the processes ${\mathcal H}^+$ and ${\mathcal H}^-$ are \emph{independent}.
\end{theorem}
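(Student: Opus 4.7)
The plan is to translate the event for $X^{\rm 2nd}$ into a comparison of two coupled height functions, then deploy assumption (c) along characteristics and close with assumption (b) to obtain independence. Under the graphical construction driving all four systems $\eta, \tilde\eta, \eta^-, \eta^+$ with the same Poisson clocks, attractivity gives $\eta^-(t,x) \leq \eta(t,x) \leq \tilde\eta(t,x) \leq \tilde\eta^+(t,x)$, equivalently $h^-(t,x) \geq h(t,x) \geq \tilde h(t,x) \geq \tilde h^+(t,x)$. Since $\tilde\eta$ and $\eta$ coincide except at $X^{\rm 2nd}(t)$, one has $\sum_{y=1}^x(\tilde\eta_t(y) - \eta_t(y)) = \mathbbm{1}_{1\leq X^{\rm 2nd}(t)\leq x}$, and combining this two-valued jump with the sandwich yields the crossing identity
\[
\{X^{\rm 2nd}(t) \geq x\} \;=\; \{h^-(t,x) < \tilde h^+(t,x)\}
\]
up to measure-zero and $O(1)$ boundary issues which are irrelevant at the $t^{1/3}$ scale. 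Applied with $t' = t + \tau t^{2/3}$ and $X = v_s t' + s t^{1/3}$, this reduces the theorem to the KPZ asymptotics of $\tilde h^+(t',X) - h^-(t',X)$.

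Next I invoke assumption (c) along characteristics. Writing $X = v_s t' + u'(t')^{2/3}$ gives $u' = s t^{1/3}(t')^{-2/3} \to 0$. The $\rho_-$-characteristic through $(X,t')$ lands at time $t$ at $Y^- = X - (1-2\rho_-)\tau t^{2/3}$; since $v_s - (1-2\rho_-) = \rho_- - \rho_+$, this equals $v_s t + (\rho_- - \rho_+)\tau t^{2/3} + s t^{1/3}$, with scaled coordinate $u_- = (\rho_- - \rho_+)\tau + o(1)$. The ``constant law along characteristics'' part of (c), applied at both reference times, yields
\[
\frac{h^-(t',X) - {\bf h}^-(u',t')}{-t^{1/3}} \;\longrightarrow\; {\mathcal H}^-\!\left((\rho_- - \rho_+)\tau\right),
\]
and an analogous statement for $\tilde h^+$ with limiting argument $u_+ = (\rho_+ - \rho_-)\tau$. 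Because $u'(t')^{2/3} = s t^{1/3}$ in either parametrization, the deterministic centerings differ by ${\bf h}^-(u',t') - {\bf h}^+(u',t') = [(1-2\rho_-) - (1-2\rho_+)]\, s\, t^{1/3} = 2(\rho_+ - \rho_-)\, s\, t^{1/3}$. Subtracting the two expansions and dividing by $t^{1/3}$, the event $\{h^-(t',X) < \tilde h^+(t',X)\}$ collapses in the $t \to \infty$ limit to $\{\mathcal H^-((\rho_- - \rho_+)\tau) - \mathcal H^+((\rho_+ - \rho_-)\tau) \geq 2(\rho_+ - \rho_-) s\}$, which is the claimed identity.

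Independence of the two limit processes follows from (b) together with TASEP locality. Via the LPP representation, the value of $h^-(t',Y)$ depends only on the initial profile and Poisson clocks in a narrow tube around its backward geodesic, with time-$0$ endpoint $\bx^-$; the same applies to $\tilde h^+$ with endpoint $\bx^+$. Since (b) forces $\bx^- \leq -\delta t$ and $\bx^+ \geq \delta t$ with probability tending to $1$, the two tubes lie in spatially disjoint half-lines, so the pair of prelimit fluctuation processes is asymptotically independent, and this independence is inherited by the distributional limits $\mathcal H^\pm$.

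The main obstacle is the rigorous use of ``constant along characteristics'': (c) is phrased as convergence at a single reference time $t$, yet the argument must apply it simultaneously at $t$ and at the shifted time $t' = t + \tau t^{2/3}$. The required input is a slow-decorrelation estimate (in the spirit of Corwin--Ferrari--P\'ech\'e and Ferrari--Spohn) guaranteeing that the fluctuation at a space-time point is, up to $o(t^{1/3})$, the same as that at any point obtained by shifting along the macroscopic characteristic through it by $O(t^{2/3})$. Assembling this with the sandwich/crossing identity and the LPP locality used for independence, while controlling error terms uniformly so that no spurious $t^{1/3}$ corrections appear, is the main technical content of the proof.
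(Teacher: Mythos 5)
This statement is cited verbatim from \cite{FN24}; the paper does not reprove it (it only remarks that a variant with shifted centerings follows by the same argument). So there is no ``paper's own proof'' to compare against, and what you have written is a reconstruction of the Ferrari--Nejjar argument. At the level of strategy your reconstruction is right: reduce the event for $X^{\rm 2nd}$ to a comparison of $h^-$ and $\tilde h^+$, feed in assumption (c) along backward characteristics via slow decorrelation, and close with assumption (b) plus geodesic locality to get independence of $\mathcal H^\pm$. Your identification of slow decorrelation as the technical crux is also correct.

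However, there is a genuine error in the step you lean on to justify the crossing identity. You claim attractivity gives $\eta^- \leq \eta \leq \tilde\eta \leq \tilde\eta^+$, ``equivalently $h^- \geq h \geq \tilde h \geq \tilde h^+$.'' The first chain (occupation variables) is fine, but it does \emph{not} translate into a pointwise ordering of height functions with a common anchoring at $x=0$: pointwise $\eta_1 \leq \eta_2$ gives $h_1(x) - h_1(0) \geq h_2(x) - h_2(0)$ only for $x\geq 0$ and the opposite inequality for $x\leq 0$. Checking the actual initial profiles: $h^-(0,\cdot)\geq h(0,\cdot)$, but also $\tilde h(0,\cdot)\geq h(0,\cdot)$ (the extra particle at $0$ raises $\tilde h$ by $2$ on the negative axis), and $\tilde h^+(0,\cdot)\geq \tilde h(0,\cdot)$ (filling the left raises the height there). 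So TASEP monotonicity gives $h^-\geq h$ and $\tilde h^+\geq \tilde h\geq h$, and $h^-$ and $\tilde h^+$ are \emph{not} globally ordered; their initial profiles cross at the origin. Your proposed sandwich $h^-\geq \tilde h^+$ is false, and consequently the argument you give for $\{X^{\rm 2nd}(t)\geq x\}=\{h^-(t,x)<\tilde h^+(t,x)\}$ does not go through. (The event itself is essentially the right one; what is missing is the correct mechanism.)

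The correct route, which is what \cite{FN24} (building on Ferrari--Pimentel and FGN19) actually uses, is the \emph{minimum-preservation} property of TASEP height functions: since $h(0,\cdot)=\min(h^-(0,\cdot),h^+(0,\cdot))$ and $\tilde h(0,\cdot)=\min(h^-(0,\cdot),\tilde h^+(0,\cdot))$ pointwise, this decomposition is preserved by the dynamics, $h(t,\cdot)=\min(h^-(t,\cdot),h^+(t,\cdot))$ and likewise for $\tilde h$. The second class particle is then identified with the competition interface, i.e.\ the location where the argmin of the decomposition switches branches; your observation that $\sum_{y=1}^x(\tilde\eta_t(y)-\eta_t(y))=\mathbbm 1_{1\leq X^{\rm 2nd}(t)\leq x}$ is exactly the ingredient that feeds into this. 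Once that identity is in place, the rest of your outline (KPZ scaling along the $\rho_\pm$-characteristics, the cancellation $\mathbf h^-(u',t')-\mathbf h^+(u',t')=2(\rho_+-\rho_-)st^{1/3}$, and independence via disjoint geodesic tubes from assumption (b)) is the right assembly. You should replace the attractivity-sandwich justification by the min-superposition/competition-interface argument; as written that step is wrong and would propagate a sign error through the reduction.
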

In \cite{FN24}, the theorem is applied to periodic initial conditions with density $\rho_-$ on $x<0$ and density $\rho_+$ on $x>0$. The assumption $(a),(b)$ is true for product Bernoulli measures with density $\rho_-$ and $\rho_+$, since they only concern the hydrodynamic behavior of the system; also see Remark (3.4) in \cite{FN24}.

For condition (c), we replace with following assumption:
\begin{itemize}
    \item[(c')] There exist limiting processes ${\mathcal H}^-(u)$ and ${\mathcal H}^+(u)$ whose distribution is continuous in $u$, such that
\begin{equation}\label{eq2.3newversion}
\begin{aligned}
&\lim_{t\to\infty} \frac{h^-(t,v_s t+u t^{2/3}-\tfrac{N_r+N_l}{\rho_+-\rho_-})-{\bf h}^-(t,u)-\frac{(2\rho_--1)N_r+(2\rho_+-1)N_l}{\rho_+-\rho_-}}{-t^{1/3}}={\mathcal H}^-(u),\\
&\lim_{t\to\infty} \frac{\tilde h^+(t,v_s t+u t^{2/3}-\tfrac{N_r+N_l}{\rho_+-\rho_-})-{\bf h}^+(t,u)-\frac{(2\rho_--1)N_r+(2\rho_+-1)N_l}{\rho_+-\rho_-}}{-t^{1/3}}={\mathcal H}^+(u),
\end{aligned}
\end{equation}
where ${\bf h}^+(t,u),{\bf h}^-(t,u)$ are defined in the same way as in the original assumption. $N_l,N_r$ are defined in \eqref{eqn:NlNrdef}.
\end{itemize}
Under this assumption, the conclusion about the position of the second class particle is the following.
\begin{equation}
\begin{aligned}
&\lim_{t\to\infty} \mathbb{P}\left(\Big\{X^{\rm 2nd}(t+\tau t^{2/3})\geq v_s (t+\tau t^{2/3}) -\tfrac{N_r+N_l}{\rho_+-\rho_-}+ s t^{1/3}\Big\}\right) \\
&=\mathbb{P}\left( \{{\mathcal H}^-((\rho_--\rho_+)\tau)-{\mathcal H}^+((\rho_+-\rho_-)\tau)\geq 2(\rho_+-\rho_-)s\}\right).
\end{aligned}
\end{equation}
The proof of the new version of the theorem is the same as the proof in \cite{FN24}, therefore, we will omit it here.
The first limit in \eqref{eq2.3newversion} is exactly the same as in $K^l$. The second limit is essentially the same as the proof for $K^r$. The difference is that in equation \eqref{rformula}, the sum is from $0$ to $n-1$, which is ideal since in equation \eqref{eqn:extendto0} we actually need to extend it. Another difference is that now the initial condition on the left of origin is fully stacked, rather than being density $\rho_-$, this is also irrelevant, which can be seen from our proof of Proposition \eqref{prop:mainconvergencetheoremright}. Thus, we derive that ${\mathcal H}^+(\bx),{\mathcal H}^-(\bx)$ are $\Aistat(\tilde{\bx}_+),\Aistat(\tilde{\bx}_-)$, which is the result we want to show.

\section{Appendix}
In this section, we prove the trace norm convergence for the kernel. For the purpose of the convergence of equation \eqref{eqn:117} in the proof of Proposition \eqref{rightpiececonvergence}, we need a scaling on $\bx$ which is slightly more general than the scaling in Proposition \eqref{mainconvergencetheoremleft}. Notice that when $\lambda_1=0,\lambda_2 =1$, it reduces to the scaling in Proposition \eqref{mainconvergencetheoremleft}. When $\lambda_1=1,\lambda_2=0$, it reduces to the scaling in \cite{bfp2010}.
\begin{theorem}\label{thm:convergenceandbound}
    Assume that we scale the variables in the following way. 
    \begin{equation}
    \begin{split}
    &n_- = \rho_-^2\varepsilon^{-3/2}\bt+\rho_-\rho_+\varepsilon^{-1}\lambda_2\bx-\rho_-\varepsilon^{-1}\lambda_1\bx-\varepsilon^{-1/2}\bs, t = \varepsilon^{-3/2}\bt+\varepsilon^{-1}\lambda_2\bx\\
    &a = (1-\rho_--\rho_+)(\varepsilon^{-3/2}\bt+\varepsilon^{-1}\lambda_2\bx)+\varepsilon^{-1}\lambda_1\bx-\frac{N_r+N_l}{\rho_+-\rho_-}.
    \end{split}
    \end{equation}
    If we set $x_1 = \varepsilon^{-3/2}(\rho_+-\rho_-)\bt-\frac{N_r+N_l}{\rho_+-\rho_-}+\varepsilon^{-1/2}\chi_-^{-1/3}(1-\rho_-)\bv, x_2 = a+\varepsilon^{-1/2}\chi_-^{-1/3}(1-\rho_-)\bu$. As $\varepsilon \to 0$,
    \begin{equation}
    \begin{split}
        \varepsilon^{-1/2}\chi_-^{-1/3}(1-\rho_-) S_{-t,-n_-}(x_1,x_2) \rightarrow \mathbf{S}_{-\bt,\bx}(\bv,\bu)\\
        \varepsilon^{-1/2}\chi_-^{-1/3}(1-\rho_-)\overline{S}_{-t,n_-}(x_1,x_2) \rightarrow \overline{\mathbf{S}}_{-\bt,-\bx}(\bv,\bu)
    \end{split}
    \end{equation}
    If $\varepsilon^{1/2}(X^{ll,\varepsilon}_0(\varepsilon^{-1}\by)+\varepsilon^{-1}\by/\rho_--1)\to \mathfrak{h}^-_0(\by),\by>0$ in distribution, where $\mathfrak{h}^-_0(\by) = \mathfrak{h}_0(-\by)$ and $\mathfrak{h}_0(\cdot)$ is a Brownian motion, then
   \begin{equation}
\varepsilon^{-1/2}\chi_-^{-1/3}(1-\rho_-)\overline{S}_{-t,n_-}^{\mathrm{epi}(X^{ll,\varepsilon}_0)}(x_1,x_2) \to \bS_{-\bt,-\bx}^{\mathrm{epi}(- \mathfrak{h}^-_0)}(\bv,\bu).\\
\end{equation}
Moreover, we have the trace norm of $(S_{-t,n_-
})^*\overline{S}_{-t,n_-}^{\mathrm{epi}(X^{ll,\varepsilon}_0)}$ on $L^2(-\infty,a]$ is uniformly bounded in $\varepsilon$.
\end{theorem}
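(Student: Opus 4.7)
The plan is to establish the three pointwise convergence statements by steepest descent on the contour integral formulas \eqref{formulaSandSbar}, upgrade the epigraph convergence via Donsker's theorem together with Proposition~\ref{hittingprobabilityconvergence}, and finally obtain the uniform trace norm bound by writing the composition as a product of two Hilbert--Schmidt operators whose norms are controlled via the same steepest descent analysis.

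\textbf{Pointwise convergence of $S$ and $\overline{S}$.} Starting from the contour formulas \eqref{formulaSandSbar}, substitute the scaling of $t$, $n_-$, $x_1$, $x_2$ and extract a prefactor so that the integrand is $e^{t F(w)}$ times rational factors. The critical point is $w_c=\rho_-$, and a Taylor expansion
\[
F\bigl(\rho_-+\chi_-^{1/3}\varepsilon^{1/2}\tilde w\bigr)-F(\rho_-)= \tfrac13 \bt \tilde w^3 + \bx \tilde w^2 + (\bu-\bv)\tilde w \cdot\varepsilon^{\cdot}+ O(\varepsilon^{1/2})
\]
(once the cubic, quadratic and linear contributions from $t$, $n_-$ and $x_1-x_2$ are matched) shows that the factors $\chi_-^{-1/3}(1-\rho_-)$ in the scaling of $\bv,\bu$ are exactly what renormalizes the cubic coefficient to $1/3$. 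Deforming the $w$ contour to the steepest descent path $\rho_- + \chi_-^{1/3}\varepsilon^{1/2} C_1^{\pi/3}$ and applying dominated convergence (with the Gaussian--cubic envelope as uniform majorant) yields the limit $\bS_{-\bt,\bx}(\bv,\bu)$ for $S$; the computation for $\overline{S}$ is identical up to the sign flip in the quadratic term, producing $\bS_{-\bt,-\bx}(\bv,\bu)$.

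\textbf{Convergence of the epigraph version.} Rewrite
\[
\overline{S}_{-t,n_-}^{\mathrm{epi}(X^{ll,\varepsilon}_0)}(x_1,x_2)=\mathbb{E}_{B_0=x_1}\bigl[\overline{S}_{-t,n_--\tau}(B_\tau,x_2)\,\mathbf{1}_{\tau<n_-}\bigr].
\]
Under the diffusive rescaling $B_\varepsilon(\by)=\varepsilon^{1/2}\bigl(B_{\varepsilon^{-1}\by}+\varepsilon^{-1}\by/\rho_--1\bigr)$, Donsker's theorem gives $B_\varepsilon\to \bB$ in distribution in the uniform topology on compacts, where $\bB$ is a Brownian motion with diffusion coefficient $2\chi_-/\rho_-^2$ in the scaled coordinates. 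Coupled with the assumed convergence of $X_0^{ll,\varepsilon}$ to $\mathfrak{h}_0^-$, Proposition~\ref{hittingprobabilityconvergence} yields convergence of the rescaled hitting time $\tau^\varepsilon$ to the hitting time $\btau$ of $\bB$ on $\mathrm{epi}(-\mathfrak{h}_0^-)$. A tail estimate analogous to \eqref{tailprobabilitytracenorm} shows that the contribution from $\tau^\varepsilon$ larger than $d$ is negligible as $d\to\infty$ uniformly in $\varepsilon$, so that Skorokhod's representation together with the pointwise convergence of $\overline{S}_{-t,n_--\tau}$ to $\bS_{-\bt,-\bx+\btau}$ gives the claimed limit $\bS_{-\bt,-\bx}^{\mathrm{epi}(-\mathfrak{h}_0^-)}(\bv,\bu)$.

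\textbf{Uniform trace norm bound.} This is the main obstacle. Factor
\[
\overline{1}^a(S_{-t,-n_-})^*\overline{S}_{-t,n_-}^{\mathrm{epi}(X^{ll,\varepsilon}_0)}\overline{1}^a = A_\varepsilon \cdot B_\varepsilon,
\]
where $A_\varepsilon = \overline{1}^a(S_{-t,-n_-})^*$ and $B_\varepsilon = \overline{S}_{-t,n_-}^{\mathrm{epi}(X^{ll,\varepsilon}_0)}\overline{1}^a$, and use $\lVert A_\varepsilon B_\varepsilon\rVert_{\mathrm{tr}}\leq \lVert A_\varepsilon\rVert_{\mathrm{HS}}\lVert B_\varepsilon\rVert_{\mathrm{HS}}$. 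Each Hilbert--Schmidt norm is a double contour integral: for $A_\varepsilon$ one writes $\lVert A_\varepsilon\rVert_{\mathrm{HS}}^2$ exactly as in \eqref{eqn:133} and for $B_\varepsilon$ as in \eqref{eqn:134}, where now the cubic exponent from the steepest descent analysis produces a genuine Gaussian envelope on the contour. The key point is to deform both contours simultaneously through the critical point $w_c$ along the steepest descent curve, so that both real parts of the exponents are uniformly bounded above by $-c(\mathrm{Im}\,\tilde w)^2-c|\tilde w|^3$ with $c>0$ independent of $\varepsilon$, and uniformly in the restricted domain $x_1,x_2\leq a$. For the epigraph factor $B_\varepsilon$, one conditions on $(\tau,B_\tau)$ and bounds the conditional Hilbert--Schmidt norm of $\overline{S}_{-t,n_--\tau}\overline{1}^a$ uniformly in the starting location $B_\tau$ via the same critical point estimate; the probability factor $\mathbb{P}(\tau^\varepsilon\in ds,B_\tau\in dz)$ then integrates to at most $1$. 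Uniformity in the random realization of $X_0^{ll,\varepsilon}$ is obtained because the bound depends on $X_0^{ll,\varepsilon}$ only through $B_\tau$, which is a tight family after rescaling. This yields an $\varepsilon$-independent trace norm bound as required.
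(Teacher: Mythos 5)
The pointwise convergence arguments are in the same spirit as the paper's steepest descent analysis, though you have the critical point slightly wrong: for $S_{-t,-n_-}$ the double critical point is at $w=1-\rho_-$, while it is at $w=\rho_-$ only for $\overline{S}_{-t,n_-}$ (the two contour formulas in \eqref{formulaSandSbar} are not symmetric under $w\mapsto 1-w$ because of the exponential factors). The epigraph convergence step is essentially the paper's argument via Proposition~\ref{hittingprobabilityconvergence}, though you should check that the diffusion coefficient comes out as $2$ after the $\chi_-^{-1/3}(1-\rho_-)$ rescaling of the spatial variable, not $2\chi_-/\rho_-^2$.

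The trace norm argument, however, has a genuine gap. You propose $\lVert A_\varepsilon B_\varepsilon\rVert_{\mathrm{tr}}\leq\lVert A_\varepsilon\rVert_{\mathrm{HS}}\lVert B_\varepsilon\rVert_{\mathrm{HS}}$ with $A_\varepsilon=\overline{1}^a(S_{-t,-n_-})^*$ and $B_\varepsilon=\overline{S}^{\mathrm{epi}}_{-t,n_-}\overline{1}^a$, but neither Hilbert--Schmidt norm is finite. The contraction in the composition runs over the middle variable $z$ (the walk's starting point), which is unrestricted. The kernel $S_{-t,-n_-}(z,z_1)$ is essentially a function of $z-z_1$, so $\sum_{z\in\mathbb{Z}}|S_{-t,-n_-}(z,z_1)|^2$ is independent of $z_1$, and summing over $z_1\leq a$ gives $\lVert A_\varepsilon\rVert_{\mathrm{HS}}=\infty$. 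Similarly, for $z$ above the curve $X_0^{ll,\varepsilon}$ the hitting time is $\tau=0$ and $\overline{S}^{\mathrm{epi}}(z,z_2)=\overline{S}(z,z_2)$, which does not decay as $z\to+\infty$ uniformly over $z_2\leq a$, so $\lVert B_\varepsilon\rVert_{\mathrm{HS}}=\infty$ too. The identity in \eqref{eqn:133} that you cite is the $L^2(z_1)$-norm of a single row $S(z,\cdot)$ at \emph{fixed} $z$, not the HS norm of $A_\varepsilon$. The paper's argument instead expands the contraction over $z$, writes the product as an integral over $(z,s)=(B_\varepsilon(0),\tau_\varepsilon)$, bounds it by a superposition of rank-one operators whose trace norms are products of two single-row $L^2$-norms, and then uses two sources of decay to make the $(z,s)$-integral finite: the hitting probability bound \eqref{eqn:boundonhittingprob} supplies Gaussian decay in $z$ below the curve, while for $z$ above the curve the \emph{combined} decay of $F_\varepsilon(\bx,z)+\overline{F}_\varepsilon(-\bx,z)$ must be used (neither factor alone decays in $z$). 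Your factorization discards precisely this $z$-decay mechanism, which is the crux of the estimate; the delicate region-by-region analysis of Lemma~\ref{estimationlemma} exists to handle exactly this issue.
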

\begin{proof}
We look at the exact contour integral formula for the kernel.
\begin{equation}
\begin{split}
\tfrac{1-\rho_-}{\varepsilon^{1/2}\chi_-^{1/3}}
 S_{-t,-n}(x_1,x_2) &= \tfrac{1-\rho_-}{\varepsilon^{1/2}\chi_-^{1/3}2\pi i}\int_{\Gamma_0}dw_1(\frac{1-\rho_-}{\rho_-})^{n}\frac{(1-\rho_-)^{x_2-x_1}(1-w_1)^{n}}{w_1^{n+1+x_2-x_1}}e^{t(w_1-(1-\rho_-))}\\
&=\tfrac{1-\rho_-}{\varepsilon^{1/2}\chi_-^{1/3}2\pi i}\int_{\Gamma_0}dw_1 e^{\varepsilon^{-3/2}f_1(w_1)+\varepsilon^{-2/2}f_2(w_1)+\varepsilon^{-1/2}f_1(w_1)+f_4(w_1)}
\end{split}
\end{equation}
where 
\begin{equation}
\begin{split}
f_1(w_1)=&\rho_-^2\bt\log(\tfrac{1-w_1}{\rho_-})-(1-\rho_-)^2\bt\log(\tfrac{w_1}{1-\rho_-})+\bt(w_1-(1-\rho_-)),\\
f_2(w_1)=&(\rho_-\rho_+ \lambda_2-\rho_-\lambda_1)\bx\log(\tfrac{1-w_1}{\rho_-})-\big((1-\rho_-)(1-\rho_+)\lambda_2+(1-\rho_-)\lambda_1\big)\bx\log(\tfrac{w_1}{1-\rho_-})\\
&+\lambda_2\bx (w_1-(1-\rho_-)),\\
f_3(w_1)=& -\bs\log(\tfrac{1-w_1}{\rho_-})-(1-\rho_-)\chi_-^{-1/3}(\bu-\bv)\log(\tfrac{w_1}{1-\rho_-})+\bs\log(\tfrac{w_1}{1-\rho_-}),\\
f_4(w_1)=&-\log(w_1).
\end{split}
\end{equation}
For the other part,
\begin{equation}
\begin{split}
\tfrac{1-\rho_-}{\varepsilon^{-1/2}\chi_-^{-1/3}2\pi i}\overline{S}_{-t,n}(x_1,x_2) &= \tfrac{1-\rho_-}{\varepsilon^{1/2}\chi_-^{1/3}2\pi i}\int_{\Gamma_0}dw_2 (\frac{\rho_-}{1-\rho_-})^{n-1}\frac{(1-w_2)^{x_2-x_1+n-1}}{(1-\rho_-)^{x_2-x_1}w_2^n}e^{t(w_2-\rho_-)}\\
&=\tfrac{1-\rho_-}{\varepsilon^{1/2}\chi_-^{1/3}2\pi i}\int_{\Gamma_0}dw_2e^{\varepsilon^{-3/2}g_1(w_2)+\varepsilon^{-2/2}g_2(w_2)+\varepsilon^{-1/2}g_1(w_2)+g_4(w_2)}
\end{split}
\end{equation}
where 
\begin{equation}
\begin{split}
g_1(w_2) &= (1-\rho_-)^2\bt\log(\tfrac{1-w_2}{1-\rho_-})-\rho^2\bt\log(\tfrac{w_2}{\rho_-})+\bt(w_2-\rho_-),\\
g_2(w_2) &= \big((1-\rho_-)(1-\rho_+)\lambda_2+(1-\rho_-)\lambda_1\big)\bx\log(\tfrac{1-w_2}{1-\rho_-})-(\rho_-\rho_+\lambda_2-\rho_-\lambda_1 )\bx\log(\tfrac{w_2}{\rho_-})\\
&+\bx(w_2-\rho_-),\\
g_3(w_2)&=(1-\rho_-)\chi_-^{-1/3}(\bu-\bv)\log(\tfrac{1-w_2}{1-\rho_-})-\bs\log(\tfrac{1-w_2}{1-\rho_-})+\bs \log(\tfrac{w_2}{\rho_-}),\\
g_4(w_2)&=-\log(1-w_2).
\end{split}
\end{equation}
We need to do a standard steep descend analysis. 
\[f_1'(w) = -\frac{t(w-(1-\rho_-))^2}{(1-w)w},\quad g_1'(w) = -\frac{t(w-\rho_-)^2}{(1-w)w}.\]
$f_1(w)$ has a double critical point at $w_1= 1-\rho_-$ and $g_1(w)$ has a double critical point at $w_2= \rho_-$. We make a change of variable $w_1 \to (1-\rho_-)+\varepsilon^{1/2}\chi_-^{1/3}w_1, w_2 \to \rho_-+\varepsilon^{1/2}\chi_-^{1/3}w_2$, the integral becomes:
\begin{equation}
\int_{c_\varepsilon} d\tilde{w_1}e^{F_1(\bt,\bx,\bu,\varepsilon,w_1)},\quad \int_{c_\varepsilon} d\tilde{w_2}e^{F_2(\bt,\bx,\bu,\varepsilon,w_2)}
\end{equation}
where $C_\varepsilon$ is a circle centered at $\varepsilon^{-1/2}$ with radius $\varepsilon^{-1/2}$ and 
 \begin{equation}
 \begin{split}
 &F_1(\bt,\bx,\bu,\varepsilon,w) =(\varepsilon^{-1}\bt+\varepsilon^{-1/2}\lambda_2\bx)\chi_-^{1/3} w\\
 &+(\rho_-^2\varepsilon^{-3/2}\bt+(\rho_-\rho_+ \lambda_2-\rho_-\lambda_1)\varepsilon^{-2/2}\bx+\varepsilon^{-1/2}\bs)\log(1-\frac{1}{\rho_-}\varepsilon^{1/2}\chi_-^{1/3}w)\\
 &-\big[(1-\rho_-)^2\varepsilon^{-3/2}\bt+1+\big((1-\rho_-)(1-\rho_+)\lambda_2+(1-\rho_-)\lambda_1\big)\varepsilon^{-1}\bx\\
 &+\varepsilon^{-1/2}(1-\rho_-)\chi_-^{-1/3}(\bu-\bv)-\varepsilon^{-1/2}\bs\big]\log(1+\tfrac{\varepsilon^{1/2}\chi_-^{1/3}w}{1-\rho_-}),\\
 &F_2(\bt,\bx,\bu,\varepsilon,w) =(\varepsilon^{-1}\bt+\varepsilon^{-1/2}\lambda_2\bx)\chi_-^{1/3} w\\
 &-(\rho_-^2\varepsilon^{-3/2}\bt+(\rho_-\rho_+ \lambda_2-\rho_-\lambda_1) \varepsilon^{-2/2}\bx)\log(1+\varepsilon^{1/2}\chi_-^{1/3}w/\rho_-)\\
 &+\big[ (1-\rho_-)^2\varepsilon^{-3/2}\bt+\big((1-\rho_-)(1-\rho_+)\lambda_2+(1-\rho_-)\lambda_1\big)\bx\varepsilon^{-2/2}\\
 &+\varepsilon^{-1/2}(1-\rho_-)\chi_-^{-1/3}(\bu-\bv)-\varepsilon^{-1/2}\bs-1\big]\log(1-\tfrac{\varepsilon^{1/2}\chi_-^{1/3}w}{1-\rho_-}).
 \end{split}
 \end{equation}
 We deform the path into three piece, make the new contour to be $\tilde{C} = \Gamma_1\cup \Gamma_2 \cup \Gamma_3$, where
 \begin{equation}
  \begin{split}
  \Gamma_1 &= \{re^{i\pi/3}: r\in [0,\varepsilon^{-1/2}]\}\\
  \Gamma_2 &= \{re^{-i\pi/3}: r\in [0,\varepsilon^{-1/2}]\}\\
  \Gamma_3 &= \{\varepsilon^{-1/2}+\varepsilon^{-1/2}e^{i\theta}: \theta \in [-\pi/3,\pi/3]\}
  \end{split}
  \end{equation} 
Taking the expansion of $F_1,F_2$, we have the following. 
\begin{equation}
\begin{split}
F_1(\bt,\bx,\bu,\varepsilon,w) =(-\bu-\chi_-^{-2/3}(1-2\rho_-)\bs) w+\frac{(\rho_--\rho_+)\lambda_2+\lambda_1}{2\chi_-^{1/3}}\bx w^2-\frac{1}{3}\bt w^3+O(\varepsilon^{1/2})\\    
F_2(\bt,\bx,\bu,\varepsilon,w) = (-\bu-\chi_-^{-2/3}(1-2\rho_-)\bs) w-\frac{(\rho_--\rho_+)\lambda_2+\lambda_1}{2\chi_-^{1/3}}\bx w^2-\frac{1}{3}\bt w^3+O(\varepsilon^{1/2})\\
\end{split}
\end{equation}
We show that we can extend the integral on $\Gamma_1$ and $\Gamma_2$ to $\infty$ and the part on $\Gamma_3$ goes to 0. Compute the real part of $F_1$ and $F_2$ on $\Gamma_3$,

\begin{equation}
\begin{split}
&Re(F_1(\bt,\bx,\bu,\varepsilon,\varepsilon^{-1/2}+\varepsilon^{-1/2}e^{i\theta})) =\\ &\frac{\varepsilon^{-3/2}}{2}(-2t-2t\cos(\theta)-( (1-\rho)^2t)\log(\frac{1+\rho^2+2\rho\cos(\theta)}{(1-\rho)^2})\\
&+\rho^2t\log(\frac{2+2\rho+\rho^2+2(1+\rho)\cos(\theta)}{\rho^2})+o(\varepsilon))
\end{split}
\end{equation}
using the fact the $\log(1+x)< x$,
\begin{equation}
\begin{split}
&\frac{\varepsilon
^{-3/2}t}{2}(-2(1-\rho)-2(1-\rho)\cos(\theta)+(\rho^2+O(\varepsilon^{1/2}))(\log(\frac{1+(1-\rho)^2}{\rho^2}+\frac{2(1-\rho)}{\rho}\cos(\theta)))\leq \\
&\frac{\varepsilon
^{-3/2}t}{2}(-2(1-\rho)-2(1-\rho)\cos(\theta)+\rho^2(\frac{2-2\rho}{\rho^2}+\frac{2(1-\rho)}{\rho}\cos(\theta)+O(\varepsilon^{1/2})) = \\
&-\varepsilon^{-3/2}t((1-\rho)^2\cos(\theta)+O(\varepsilon^{1/2}))
\end{split}
\end{equation}
For $-\frac{\pi}{3}<\theta<\frac{\pi}{3}$ and $\varepsilon$ small enough, $-(1-\rho)^2\cos(\theta)+O(\varepsilon^{1/2})< -\kappa$ for some fixed constant $\kappa$, so the integral vanishes in the limit.\\ 
Now, extend the path $\Gamma_1$ and $\Gamma_2$ to infinity, and the extra integral converges to 0 since the real part of the exponent is decreasing along the contour. So, we get the Airy contour. The proof for $F_2$ is the same.
\end{proof}
Up to now, we proved the pointwise convergence of the operator $S_{-t,-n}$ and $\overline{S}_{-t,n}$, and what we want to show is the convergence of the Fredholm determinant. We use the following bounds for the difference of two Fredholm determinants.
\begin{equation}
|\det(I-A)-\det(I-B)|\leq \lVert A-B\rVert_1e^{1+\lVert A\rVert_1+\lVert B\rVert_1}
\end{equation}
where $\lVert \cdot \rVert_1$ is the trace norm. The limiting operator in the trace class is proven in Appendix A in [MQR]. Now we need to get a uniform in the $\varepsilon$ bounds on the trace norm of the discrete kernels.

\begin{proposition}
The trace norm of $(S_{-t,-n})^*\overline{S}^{\mathrm{epi}(X_0)}_{-t,n}$ on $L^2((-\infty,a])$ is uniformly bounded in $\varepsilon$.
\end{proposition}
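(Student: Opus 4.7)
The natural approach is to condition on the first hitting data $(\tau, B_\tau)$ of the Geom$(\rho_-)$ walk against the curve $X_0$, reducing the bound to a sum of rank-one contributions, each of which is estimated by explicit double contour integrals. This mirrors the strategy used in Proposition \ref{prop:wrongconjuazero}, except that here we seek uniform boundedness rather than vanishing. By the strong Markov property,
\begin{equation*}
\overline{S}^{\mathrm{epi}(X_0)}_{-t,n}(z, z_2) = \sum_{s=0}^{n-1}\sum_y H_{s,y}(z)\,\overline{S}_{-t,n-s}(y, z_2),\qquad H_{s,y}(z):=\mathbb{P}_{B_0=z}(\tau=s, B_\tau=y),
\end{equation*}
so $\overline{1}^a(S_{-t,-n})^*\overline{S}^{\mathrm{epi}(X_0)}_{-t,n}\overline{1}^a$ is a countable sum, indexed by $(s, y)$, of rank-one operators of the form $\bigl(\overline{1}^a (S_{-t,-n})^*H_{s,y}\bigr)\otimes\bigl(\overline{1}^a\overline{S}_{-t,n-s}(y,\cdot)\bigr)$. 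Since the trace norm of $\phi\otimes\psi$ equals $\|\phi\|_2\,\|\psi\|_2$, the triangle inequality reduces the task to summing the products of two explicit $L^2$ norms over $(s,y)$.

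\textbf{$L^2$ bounds and summation.} Each $L^2$ norm squared can be written as a double contour integral, analogous to \eqref{eqn:133}--\eqref{eqn:134}. For instance, $\|\overline{1}^a\overline{S}_{-t,n-s}(y,\cdot)\|_2^2$ equals a double $C^0_\varepsilon$--integral of $e^{F_2(w_1)+F_2(w_2)}$ divided by a factor linear in $\varepsilon^{1/2}(w_1+w_2)$. Deforming onto the three-piece steep-descent contour built in the proof of Theorem \ref{thm:convergenceandbound} yields an integrand with Gaussian decay in the scaled variable associated to $(y,s)$, uniformly in $\varepsilon$. The factor $\|\overline{1}^a(S_{-t,-n})^*H_{s,y}\|_2$ is handled analogously, using the explicit contour representation of the hitting density $H_{s,y}$ (a convex combination of geometric transitions to the level $X_0(n-s)$) to insert an additional exponential weight whose saddle can be merged with that of $(S_{-t,-n})^*$. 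Passing to scaled hitting variables $\tilde s=\varepsilon s$ and $\tilde y = \varepsilon^{1/2}\chi_-^{1/3}(y-y_c)/(1-\rho_-)$, the sum $\sum_{s,y}$ becomes a Riemann sum (step $\sim\varepsilon^{3/2}$) for a convergent Gaussian-type integral in $(\tilde s, \tilde y)$, giving a bound independent of $\varepsilon$.

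\textbf{Main obstacle.} The principal technical difficulty is making the steep-descent bounds uniform over the full range of hitting data. The contour $C^0_\varepsilon$ used in Theorem \ref{thm:convergenceandbound} is tailored to the typical hitting scale; for atypical $(s,y)$ one needs a moving-saddle deformation that keeps $\mathrm{Re}(F_1+F_2)$ sufficiently negative along the new contour. After that shift, the angular decay estimate $\mathrm{Re}(F_i)\le -\kappa\varepsilon^{-3/2}\cos\theta$ derived in the proof of Theorem \ref{thm:convergenceandbound} must be redone with explicit dependence on the atypicality of $(s,y)$, and then matched against the tail decay of $H_{s,y}$ coming from the geometric transition density. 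Controlling the far tails—where the walk is in the moderate or large deviation regime—by combining these modified contour estimates with the crude geometric bound on $H_{s,y}$ is the most delicate step; the rest is bookkeeping parallel to the calculation in Proposition \ref{prop:wrongconjuazero}, with the crucial difference that the prefactor $\varepsilon^{1/2}$ exploited there to force vanishing is absent here, so the uniform boundedness relies entirely on integrability of the Gaussian limit.
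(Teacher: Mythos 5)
Your plan is structurally the same as the paper's: condition on the hitting data of the geometric walk, write the operator as a sum/integral of rank-one pieces whose trace norms factor into products of $L^2$ norms, express those $L^2$ norms as double contour integrals, and control the sum over hitting data by steepest descent. The minor difference in bookkeeping (you index by hitting time and hitting position $(s,y)$ and absorb the sum over the starting point $z$ into $\|(S_{-t,-n})^*H_{s,y}\|_2$, whereas the paper indexes by $(z,s)$ and pulls $z$ outside the trace norm via the triangle inequality) is immaterial, since by Minkowski's inequality your bound is at most as large as the paper's.

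There is, however, one place where your account is not just incomplete but quantitatively wrong, and it is exactly where the paper does the real work. You assert that the steep-descent deformation yields an integrand with \emph{Gaussian} decay uniformly in $\varepsilon$, so that the sum over hitting data is a Riemann sum for a convergent Gaussian-type integral. That is not what happens. The $L^2$ norm of $\overline{1}^a S^{\varepsilon}_{-1,\bx}(z,\cdot)$ (or equivalently your convolution against $H_{s,y}$) does \emph{not} decay Gaussianly, and in one regime it does not decay at all: the paper's case analysis (Lemma 7.2) shows that (i) for $z$ sufficiently positive the kernel vanishes because the contour encloses no pole; (ii) for $z$ in the intermediate range the decay is Airy-type, $\exp(-C|z|^{3/2})$; and (iii) for $z$ very negative the $L^2$ norm actually \emph{grows} polynomially in $|z|$. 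In regime (iii) the trace-norm sum is saved only by the reflection-type bound $\mathbb{P}_z(\tau_\varepsilon\le s)\le \exp\{-\kappa(z+\alpha+\gamma s)^2/s\}$ on the hitting probability, combined with cubic decay $\exp(-Cs^3)$ in the hitting time. You flag "the far tails" as the main obstacle, which is the correct instinct, but the specific claim of uniform Gaussian decay must be abandoned: the proof hinges on a case split by the location of the saddle relative to the contour, with polynomial growth in the worst case compensated by the hitting probability, and the final majorizing integral is of mixed Airy/cubic/Gaussian type, not Gaussian. Without establishing those three regimes explicitly — in particular recognizing that the operator side can grow and that the hitting-probability side must carry the integrability — the argument does not close.
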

\begin{proof}
By shifting the height and rescale of the initial condition, we can assume $\bt = 1,\bs =0, \lambda_2=0,\lambda_1=1$. 
We define $S^\varepsilon_{-1,\bx}= S_{-t,-n}$ and $\overline{S}^\varepsilon_{-1,\bx}=\overline{S}_{-t,n}$ to make the variable $\bx$ explicit.
By definition,
\begin{equation}
 \begin{split}
 (S_{-t,-n})^*\overline{S}^{\mathrm{epi}(X_0)}_{-t,n}(z_1,z_2)=\int_{z\in \varepsilon\mathbb{Z},s\in[0,\varepsilon n)}\mathbb{P}_{B_\varepsilon(0)=z}(\tau_\varepsilon \in ds)S_{-1,\bx}(z,z_1)\overline{S}_{-1,-\bx-s}(h_\varepsilon(s),z_2).
 \end{split}
 \end{equation}
 Thus, 
 \begin{equation}
 \begin{split}
 \lVert (S^\varepsilon_{-1,\bx})^*\overline{S}^\varepsilon_{-1,\bx} \rVert_{tr} \leq \int_{z\in \varepsilon\mathbb{Z},s\in[0,\varepsilon n)}\mathbb{P}_{B_\varepsilon(0)=z}(\tau_\varepsilon \in ds)\lVert S^\varepsilon_{-1,\bx}(z,z_1)\overline{S}^\varepsilon_{-1,-\bx-s}(h_\varepsilon(s),z_2) \rVert_{tr}.
 \end{split}
 \end{equation}
 Since $S^\varepsilon_{-1,\bx}(z,z_1), \overline{S}^\varepsilon_{-1,-\bx-s}(h_\varepsilon(s),z_2)$ is a rank one operator, its trace norm is the product of $L^2$ norm,
 \begin{multline}
      \lVert S^\varepsilon_{-1,\bx}(z,z_1)\overline{S}^\varepsilon_{-1,-\bx-s}(h_\varepsilon(s),z_2) \rVert_{tr}\\=(\int_{-\infty}^a|S^\varepsilon_{-1,\bx}(z-z_1)|^2dz_1\int_{\infty}^a |S^\varepsilon_{-1,-\bx-s}(h_\varepsilon(s)-z_2)|^2dz_2)^{1/2}.
 \end{multline}
 We first bound the hitting probability part. From classical theory of Brownian motion, and our assumption on the initial condition that there exists $\alpha,\gamma$, such that
 \begin{equation}
 h^\varepsilon(0,s) \geq -\alpha -\gamma|s|
 \end{equation} uniformly in $\varepsilon$. We have
 \begin{equation}\label{eqn:boundonhittingprob}
 \mathbb{P}(\tau_\varepsilon\leq s) \leq \exp\{-\kappa_2\frac{(z+\alpha+\gamma s)^2}{s}\}.
 \end{equation}
 We now compute the $L^2$ norm of the right-hand side.
 \begin{equation}\label{eqn:166}
 \int_{-\infty}^adz_1 |S^\varepsilon_{-1,\bx}(z-z_1)|^2 =\frac{1}{(2\pi i)^2}\int_{C^0_\varepsilon}dw_1\int_{C^0_\varepsilon}dw_2 \frac{e^{F_1(t,x,z,\varepsilon,w_1)+
F_1(t,x,z,\varepsilon,w_2)}}{w_1+w_2-\varepsilon^{1/2}w_1w_2}
 \end{equation}
 where $C^0_\varepsilon$ is a circle centered at $\varepsilon^{-1/2}$ with radius $\varepsilon^{-1/2}$, with a little perturb at 0 so that 0 is outside the contour. Thus the denominator will not be zero if $\varepsilon$ is small enough, and 
 \begin{multline}\label{eqn:167}
     \int_{-\infty}^adz_2 |\overline{S}^\varepsilon_{-1,\bx}(h_\varepsilon(s)-z_2)| \\= \frac{1}{(2\pi i)^2}\int_{C_\varepsilon^0}dw_1\int_{C_\varepsilon^0}dw_2\frac{e^{F_1(t,x,h_\varepsilon(s),\varepsilon,w_1)+
F_1(t,x,h_\varepsilon(s),\varepsilon,w_2)}(1+\varepsilon^{1/2}w_1)(1+\varepsilon^{1/2}w_2)}{w_1+w_2+\varepsilon^{1/2}w_1w_2}.
 \end{multline}Thus we can write the bound on the trace norm in the following form.
 \begin{equation}\label{eqn:tracenormbound}
\lVert(S^\varepsilon_{-1,\bx})^*\overline{S}^\varepsilon_{-1,\bx}\rVert_1\leq  \int_{z\in \varepsilon\mathbb{Z},s\in[0,\varepsilon n)}\mathbb{P}_{B_\varepsilon(0)=z}(\tau_\varepsilon\in ds)\exp\{F_\varepsilon(\bx,z)+\overline{F}_\varepsilon(-\bx-s,h_\varepsilon(s))\}
\end{equation} 
$e^{F_\varepsilon(\bx,z)} $ and $e^{\overline{F}_\varepsilon(-\bx-s,h_\varepsilon(s))}$ are bounds of equation (\ref{eqn:166}) and equation (\ref{eqn:167}). The general idea is that we want to move the contours in equation (\ref{eqn:166}) and equation (\ref{eqn:167}) to pass through the critical point and deform it to a path such that the real part is decreasing; then we can use the value of integrand at the critical point as our bounds. Of course, it is not necessary for the contours to pass through the critical points since we only want to obtain a bound rather than an asymptotic formula. So, in some cases, we just move the contour to some rather arbitrary places, which make the integral bounded.

Now we analyze the function at the critical point.
The critical points of $F_1$ are:
\begin{equation}
\begin{split}
-\frac{1}{2}(x-\varepsilon^{1/2}u+\varepsilon)\pm \sqrt{(\frac{1}{2}(x-\varepsilon^{1/2}u+\varepsilon))^2+\rho(u-\varepsilon^{1/2})}.
\end{split}
\end{equation}
Thus, we make the change of variable that:
\begin{equation}
x_\varepsilon = \frac{1}{2}(x-\varepsilon^{1/2}u+\varepsilon), \quad u_\varepsilon = \rho(u-\varepsilon^{1/2}).
\end{equation}
Our original function written as:
\begin{equation}\label{formula:Fepsilon}
\begin{split}
F_\varepsilon(t,x_\varepsilon,u_\varepsilon,w) &= (\rho^2\varepsilon^{-3/2}t-2\rho\varepsilon^{-1}x_\varepsilon-\varepsilon^{-1/2}u_\varepsilon)\log(1+\varepsilon^{1/2}w/\rho)\\
&-( (1-\rho)^2\varepsilon^{-3/2}t+2(1-\rho)\varepsilon^{-1}x_\varepsilon-\varepsilon^{-1/2}u_\varepsilon)\log(1-\varepsilon^{1/2}w/(1-\rho))-\varepsilon^{-1}tw
\end{split}
\end{equation}
for which the critical point are:
\begin{equation}
-x_\varepsilon \pm \sqrt{x_\varepsilon^2+u_\varepsilon}.
\end{equation}
Do the similar thing for $F_2$, its critical point are 
\begin{equation}
w_{\pm} = -\frac{1}{2}(x+\varepsilon^{1/2}u+\varepsilon)\pm \sqrt{(\frac{1}{2}(x+\varepsilon^{1/2}u+\varepsilon))^2+\rho(u+\varepsilon^{1/2})}.
\end{equation}
We make the change of variable:
\begin{equation}
\overline{x}_\varepsilon = \frac{1}{2}(x+\varepsilon^{1/2}u+\varepsilon), \quad \overline{u}_\varepsilon = \rho(u+\varepsilon^{1/2}).
\end{equation}
The original function in the new variables are:
\begin{equation}
\begin{split}
\overline{F}_\varepsilon(t,x_\varepsilon,u_\varepsilon,w)&=( (1-\rho)^2\varepsilon^{-3/2}t-2(1-\rho)x\varepsilon^{-1}-\varepsilon^{-1/2}u)\log(1+\varepsilon^{1/2}w/(1-\rho))\\
&-(\rho^2\varepsilon^{-3/2}t+2\rho x\varepsilon^{-1}-\varepsilon^{-1/2}u)\log(1-\varepsilon^{1/2}w/\rho)-\varepsilon^{-1}t w .
\end{split}
\end{equation}
It is not difficult to see $F_\varepsilon$ and $\overline{F}_\varepsilon$ come from the following function:
\begin{equation}
F(t,x,u,w,\rho) = (\rho^2t-2\rho-u)\log(1+w/\rho)-( (1-\rho)^2 t+2 (1-\rho)x-u)\log(1-w/(1-\rho))-t w.
\end{equation}
We look at the value of the integrand at critical point,
\begin{equation}
\begin{split}
F(w_+,x,u) &= \sum_{n=1}^{\infty}\frac{2(-\varepsilon^{1/2}w_+)^{n-1}}{n(n+1)(n+2)}(\frac{1}{(\rho-1)^n}-\frac{1}{\rho^n})(x^3-3xy+2y^{3/2})\\
&+\sum_{n=1}^{\infty}\frac{2(-\varepsilon^{1/2}w_+)^{n-1}(n-1)}{n(n+1)(n+2)}(\frac{1}{(\rho-1)^n}-\frac{1}{\rho^n})(x^2y^{1/2}-2xy+y^{3/2})
\end{split}
\end{equation}
where $y = x^2+u, w_+ =-x+\sqrt{y}$. 
First series is $$\nu_1:=\frac{1}{w^3}(w+(w+\rho-1)^2\log(1-\frac{w}{1-\rho})-(\rho+w)^2\log(1+\frac{w}{\rho})),$$ and the second series is $$\nu_2 := \frac{2}{w^2}(w+(\rho-1)(\rho-1+w)\log(1-\frac{w}{1-\rho})-\rho(\rho+w)\log(1+\frac{w}{\rho})).$$
It can be checked that for all $0<\rho<1$, for $w \in (-\rho,1-\rho)$, $\nu_1-\nu_2<0$.
Also note that 
\begin{equation}
(x^3-3xy+2y^{3/2})-(x^2y^{1/2}-2xy+y^{3/2}) =(\frac{1}{3}y^{3/2}-x^2y^{1/2}+\frac{2}{3}x^{3})+(\frac{2}{3}y^{3/2}-xy+\frac{1}{3}x^3) \geq 0
\end{equation}
Thus the conclusion is that, there exist $C_1>0,C_2>0$, such that 
\begin{equation}\label{criticalpointvalue}
F(w^+,x,u) = -C_1((\frac{1}{3}y^{3/2}-x^2y^{1/2}+\frac{2}{3}x^{3})+(\frac{2}{3}y^{3/2}-xy+\frac{1}{3}x^3))-C_2(x^2y^{1/2}-2xy+y^{3/2})
\end{equation}

We now estimate $F_\varepsilon$ and $\overline{F}_\varepsilon$ over different regions.
\begin{lemma}\label{estimationlemma}
\begin{enumerate}
\item  Suppose $x_\varepsilon^2+u_\varepsilon \geq 0$ and $-x_\varepsilon+\sqrt{x_\varepsilon^2+ u_\varepsilon}\geq (1-\rho)\varepsilon^{-1/2}$, then $F_\varepsilon(t,x,u,\varepsilon,w_1) = -\infty$.
\item Suppose $ u_\varepsilon>0$ and $0 < -x_\varepsilon+\sqrt{x_\varepsilon^2+u_\varepsilon}\leq (1-\rho_-)\varepsilon^{-1/2}\wedge \rho\varepsilon^{-1/2}$, we have $F_\varepsilon(x,u)\leq \hat{F}_\varepsilon(x_\varepsilon,u_\varepsilon)$. Under the same condition, on $\overline{x}_\varepsilon$ and $\overline{u}_\varepsilon$, $\overline{F}_\varepsilon(x,u)\leq \hat{F}_\varepsilon(\overline{x}_\varepsilon,\overline{u}_\varepsilon)$.
\item Suppose $x_\varepsilon >0,u_\varepsilon<0$, then $F_\varepsilon(x_\varepsilon,u_\varepsilon)\leq C\log(2+|u_\varepsilon|+|x_\varepsilon|)$ .
\end{enumerate}
\end{lemma}
\begin{proof}
\begin{enumerate}
\item This is the case that there is no pole in the integrand, thus the integral just vanishes. If 
$-x_\varepsilon+\sqrt{x_\varepsilon^2+ u_\varepsilon})\geq (1-\rho)\varepsilon^{-1/2}$, we have 
\begin{equation}
u_\varepsilon \geq (1-\rho)^2\varepsilon^{-1}+2(1-\rho) \varepsilon^{-1/2}x_\varepsilon
\end{equation}
which indicate the coefficient of the $\log(1-\varepsilon^{1/2}w/(1-\rho))$ term in \ref{formula:Fepsilon} is positive, thus there is no pole in the $e^{F_\varepsilon}$, which means the contour integral is $0$.
\item In this case, we deform the following contour: starting from the critical point $w^\varepsilon_+$, moving along angle $\pi/4$, until it hit the arc of $C_\varepsilon$, then follow the $C_\varepsilon$ to complete the circle. More precisely, $\Gamma = \gamma_1+\gamma_2$, where $\gamma_1 = \{w^\varepsilon_++re^{i\pi/4}: r\in [0,c\varepsilon^{-1/2})\},\gamma_2= \{(1-\rho)\varepsilon^{-1/2}+\varepsilon^{-1/2}e^{i\theta}:\theta\in [0,\pi/4]\}$, constant $c$ is the value such that $|w^\varepsilon_++c\varepsilon^{-1/2}e^{i\pi/4}-(1-\rho)\varepsilon^{-1/2}|= \varepsilon^{-1/2}$.\\
We first check the real part is decreasing along $\gamma_1$. We have
\begin{equation}
\begin{split}
&\partial_r Re[\varepsilon^{-3/2}F(\varepsilon^{1/2}F(\varepsilon^{1/2}(w_+^\varepsilon+re^{i\pi/4}),\varepsilon^{1/2}x_\varepsilon,\varepsilon u_\varepsilon))]\\
&= \frac{-2r^2(\rho(1-\rho)-\varepsilon^{1/2}(1-2\rho)(w+2x)+\varepsilon(2r^2+4r w +3w^2+4(r+w)x))}{((\rho+\varepsilon^{1/2}(r+w))^2+\varepsilon r^2)((1-\rho+\varepsilon^{1/2}(r+w))^2+\varepsilon r^2))}
\end{split}
\end{equation}
It is easy to see that $w_+^\varepsilon + 2 x_\varepsilon > 0$ since $u>0$. The order $\varepsilon$ term is always positive. For order $\varepsilon^{1/2}$ terms, if $1-2\rho<0$, then the whole numerator is easily seen to be negative; if $1-2\rho >0$, if we have $w+2x<\rho\varepsilon^{-1/2}$, then we have 
\begin{equation}
\rho(1-\rho) - \varepsilon^{1/2}(1-2\rho)(w_\varepsilon+2x_\varepsilon)>\rho^2
\end{equation}
so the numerator is still positive.\\
The reason for $w_+^\varepsilon+2x_\varepsilon<\varepsilon^{-1/2}\rho$ is that it is equivalent to 
\begin{equation}
x_\varepsilon^2+u_\varepsilon < (\rho\varepsilon^{-1/2}-x_\varepsilon)^2
\end{equation}
which is equivalent to 
\begin{equation}
\rho (u -\varepsilon^{1/2}) <\rho^2\varepsilon^{-1}-(x-\varepsilon^{1/2}u+\varepsilon)\rho\varepsilon^{-1/2}
\end{equation}
Since $x$ is of constant order, thus the equality is always true if we make $\varepsilon$ small enough.\\
Then we check the real part is decreasing along $\gamma_2$,
\begin{equation}
\begin{split}
&\partial_\theta(F( (1-\rho+e^{i\theta},\varepsilon^{1/2}x_\varepsilon,\varepsilon u_\varepsilon))))\\
&=2\varepsilon^{-3/2}(2-\rho^2+2\cos(\theta)+\varepsilon u+2\rho\varepsilon^{1/2}x)\tan(\theta/2)
\end{split}
\end{equation}
which is clearly positive.\\
\item This is the part where the estimate is the same as \cite{MQR2021}. We move the contour to $q := (1+|u_\varepsilon+|x_\varepsilon|)^{-1}\in (0,\varepsilon^{-1/2})$, then move in the vertical direction until it hit $\Gamma$ contour in previous case, then follow the $\Gamma$ contour. Along the vertical line, 
\begin{equation}
\begin{split}
&\partial_r Re[\varepsilon^{-3/2}F(\varepsilon^{1/2}(q+ir),\varepsilon^{1/2}x_\varepsilon,\varepsilon u_\varepsilon))]\\
&=\frac{-r(2\rho(1-\rho)(q+x)+\varepsilon^{1/2}(q^2+r^2+u)(1-2\rho)+2\varepsilon(q^2x+r^2x-2 q u)}{((\rho+\varepsilon^{1/2}q)^2+\varepsilon r^2)((1-\rho+\varepsilon^{1/2}q)^2+\varepsilon r^2))}
\end{split}
\end{equation}
We focus on the case that $u$ is very negative. If $(1-2\rho)<0$, the numerator is always negative. If $1-2\varphi>0$, we compare it with the first term. Recall, $x_\varepsilon = \frac{1}{2}(x-\varepsilon^{1/2}u+\varepsilon), u_\varepsilon = \rho(u-\varepsilon^{1/2})$, so when $u \asymp -\varepsilon^{-1/2}$,
\begin{equation}
2\rho(1-\rho)(q+x)+\varepsilon^{1/2}(q^2+r^2+u) = -\rho(1-\rho)u+\rho(1-2\rho)u+o(1)>0
\end{equation}
\end{enumerate}

\end{proof}

Let $\overline{z} = \bx(1-\rho)\varepsilon^{-1/2}+(1-\rho)^2\varepsilon^{-1}+\varepsilon^{1/2}$.
When $z \geq \overline{z}$, we use Lemma \ref{estimationlemma}(i) so that the whole integral is 0. When $-1\leq z<\overline{z}$, we use \ref{estimationlemma}(ii),
\begin{equation}\label{eqn:188}
    F_\varepsilon(\bx,z)< F(w_\varepsilon^+,x_\varepsilon,u_\varepsilon).
\end{equation}
Choosing the most significant terms from \ref{criticalpointvalue}, we have
\begin{equation}\label{eqn:189}
    F_\varepsilon(x_1,z)\leq C_2-\frac{C_1}{3}|z|^{3/2}.
\end{equation}
And when $z < -1$, we do not need a good bound, since we have the estimate from $\mathbb{P}_z(\tau_\varepsilon<s)$, thus we use the lemma \ref{estimationlemma}(iii), to find a $C_3<\infty$ such that $F_\varepsilon(\bx,z)\leq C_3(1+\log(|z|))$. Also note that the value $-1$ is arbitrary, any negative constant value can work.

Now we estimate $\overline{F}_\varepsilon(-\bx-s,h_\varepsilon(s))$. If $z > g(0)$, then $\tau = 0$ and $h_\varepsilon(0)=z$. Thus, the integral reduces to
\[\int_{z\in \varepsilon \mathbb{Z}}\exp{\{F_{\varepsilon}(\bx,z)+\overline{F}_\varepsilon(-\bx,z)}\}\]
Since $F_\varepsilon$ and $\overline{F}_\varepsilon$ are basically the same function and $\bx$ is some constant, we have the same bound.

For the case the $z \leq g(0)$, first using 
the linear lower bound for the initial configuration and upper bound for the random walk, we have:
\begin{equation}
-\alpha-\gamma s\leq h_\varepsilon(s)\leq z+\varepsilon^{-1/2}s
\end{equation}
 it is easy to see that $\overline{x}_\varepsilon\leq 0$ for $\varepsilon$ small enough. Notice that $\overline{x}_\varepsilon^2+h_\varepsilon(s)$ will only be less $0$ when $h_\varepsilon(s)<0$ and $s<s_0$ for some constant $s_0$ which does not depend on $\varepsilon$, this is because $\overline{x}_\varepsilon^2 = s^2 +O(1)$ and $h_\varepsilon(s) \geq -\alpha-\gamma s$. In the region that $0\leq s \leq s_0$, $\overline{F}(-\bx-s,h_\varepsilon(s))<C$ for $\varepsilon$ small enough. Thus, we can assume that $\overline{x}_\varepsilon^2+h_\varepsilon(s)\geq 0$, together with the fact that $s \leq \varepsilon n$, the condition of Lemma (\ref{estimationlemma})(ii) is satisfied. Using equation(\ref{criticalpointvalue}) We have 
 \[\overline{F}_\varepsilon(-\bx-s,b)\leq \hat{F}_\varepsilon(\overline{x}_\varepsilon,\overline{u}_\varepsilon)\leq C(\overline{x}_\varepsilon h_\varepsilon(s)+\frac{2}{3}\overline{x}_\varepsilon^3-\frac{2}{3}(\overline{x}_\varepsilon^2+h_\varepsilon(s))^{3/2}.\]
 Using the bounds on $h_\varepsilon(s)$ we can see that
\begin{equation}\label{eqn:191}
    \overline{F}_\varepsilon(-\bx-s,b)\leq -C_4 s^3+C_5
\end{equation}
for some $C_4,C_5 >0$.

Using equation (\ref{eqn:188}), (\ref{eqn:189}), (\ref{eqn:191}) and (\ref{eqn:boundonhittingprob}),
the trace norm (\ref{eqn:tracenormbound}) can be bounded by
\begin{multline*}
    \int_{-\infty}^{-\overline{\alpha}}dz\int_0^\infty ds e^{-\kappa_2\frac{(z+\alpha+\gamma s)^2}{s}+C_3\log|z|-C_4s^3+C_5}\\
    +\int_{-\overline{\alpha}}^{g(0)}dz\int_0^\infty ds e^{-\frac{C_1}{3}|z|^{3/2}-C_4s^3+C_5} +
\int_{g(0)}^{\infty}dz e^{-\frac{2C_1}{3}|z|^{3/2}}
\end{multline*}
which is convergent.
\end{proof}

\bibliography{shocks}

\begin{thebibliography}{DMKPS89}

\bibitem[BAC11]{BCCurrentfluctuationsforTASEP}
G\'erard Ben~Arous and Ivan Corwin.
\newblock Current fluctuations for {TASEP}: a proof of the {P}r\"ahofer-{S}pohn conjecture.
\newblock {\em Ann. Probab.}, 39(1):104--138, 2011.

\bibitem[BBAP05]{BBP05}
Jinho Baik, G\'erard Ben~Arous, and Sandrine P\'ech\'e.
\newblock Phase transition of the largest eigenvalue for nonnull complex sample covariance matrices.
\newblock {\em Ann. Probab.}, 33(5):1643--1697, 2005.

\bibitem[BF22]{FB22}
Alexey Bufetov and Patrik~L. Ferrari.
\newblock Shock fluctuations in {TASEP} under a variety of time scalings.
\newblock {\em Ann. Appl. Probab.}, 32(5):3614--3644, 2022.

\bibitem[BFP07]{BFP06}
Alexei Borodin, Patrik~L. Ferrari, and Michael Pr\"ahofer.
\newblock Fluctuations in the discrete {TASEP} with periodic initial configurations and the {${\rm Airy}_1$} process.
\newblock {\em Int. Math. Res. Pap. IMRP}, pages Art. ID rpm002, 47, 2007.

\bibitem[BFP10]{bfp2010}
Jinho Baik, Patrik~L. Ferrari, and Sandrine P\'ech\'e.
\newblock Limit process of stationary {TASEP} near the characteristic line.
\newblock {\em Comm. Pure Appl. Math.}, 63(8):1017--1070, 2010.

\bibitem[BFPS07a]{BFPS07}
Alexei Borodin, Patrik~L. Ferrari, Michael Pr\"ahofer, and Tomohiro Sasamoto.
\newblock Fluctuation properties of the {TASEP} with periodic initial configuration.
\newblock {\em J. Stat. Phys.}, 129(5-6):1055--1080, 2007.

\bibitem[BFPS07b]{b}
Alexei Borodin, Patrik~L. Ferrari, Michael Pr\"ahofer, and Tomohiro Sasamoto.
\newblock Fluctuation properties of the {TASEP} with periodic initial configuration.
\newblock {\em J. Stat. Phys.}, 129(5-6):1055--1080, 2007.

\bibitem[BFS08]{BFP08}
Alexei Borodin, Patrik~L. Ferrari, and Tomohiro Sasamoto.
\newblock Transition between {${\rm Airy}_1$} and {${\rm Airy}_2$} processes and {TASEP} fluctuations.
\newblock {\em Comm. Pure Appl. Math.}, 61(11):1603--1629, 2008.

\bibitem[BR00a]{BR00baikrains}
Jinho Baik and Eric~M. Rains.
\newblock Limiting distributions for a polynuclear growth model with external sources.
\newblock {\em J. Statist. Phys.}, 100(3-4):523--541, 2000.

\bibitem[BR00b]{BR00}
Jinho Baik and Eric~M. Rains.
\newblock Limiting distributions for a polynuclear growth model with external sources.
\newblock {\em J. Statist. Phys.}, 100(3-4):523--541, 2000.

\bibitem[CFP10]{CFP10}
Ivan Corwin, Patrik~L. Ferrari, and Sandrine P\'ech\'e.
\newblock Limit processes for {TASEP} with shocks and rarefaction fans.
\newblock {\em J. Stat. Phys.}, 140(2):232--267, 2010.

\bibitem[Che24]{chen24}
Kailun Chen.
\newblock The second class particle in the half-line open tasep, 2024.

\bibitem[CLW16]{CLW16}
Ivan Corwin, Zhipeng Liu, and Dong Wang.
\newblock Fluctuations of {TASEP} and {LPP} with general initial data.
\newblock {\em Ann. Appl. Probab.}, 26(4):2030--2082, 2016.

\bibitem[DMKPS89]{DKPS89}
A.~De~Masi, C.~Kipnis, E.~Presutti, and E.~Saada.
\newblock Microscopic structure at the shock in the asymmetric simple exclusion.
\newblock {\em Stochastics Stochastics Rep.}, 27(3):151--165, 1989.

\bibitem[Fer18a]{Fer18}
P.~L. Ferrari.
\newblock Finite {GUE} distribution with cut-off at a shock.
\newblock {\em J. Stat. Phys.}, 172(2):505--521, 2018.

\bibitem[Fer18b]{Pablo18}
Pablo~A. Ferrari.
\newblock {TASEP hydrodynamics using microscopic characteristics}.
\newblock {\em Probability Surveys}, 15(none):1 -- 27, 2018.

\bibitem[FF94a]{ff94annalsofprobability}
P.~A. Ferrari and L.~R.~G. Fontes.
\newblock Current fluctuations for the asymmetric simple exclusion process.
\newblock {\em Ann. Probab.}, 22(2):820--832, 1994.

\bibitem[FF94b]{ferrari1992}
P.~A. Ferrari and L.~R.~G. Fontes.
\newblock Shock fluctuations in the asymmetric simple exclusion process.
\newblock {\em Probab. Theory Related Fields}, 99(2):305--319, 1994.

\bibitem[FGN19]{FGN19}
P.~L. Ferrari, P.~Ghosal, and P.~Nejjar.
\newblock Limit law of a second class particle in {TASEP} with non-random initial condition.
\newblock {\em Ann. Inst. Henri Poincar\'e{} Probab. Stat.}, 55(3):1203--1225, 2019.

\bibitem[FMP09]{FMP09}
Pablo~A. Ferrari, James~B. Martin, and Leandro P.~R. Pimentel.
\newblock A phase transition for competition interfaces.
\newblock {\em Ann. Appl. Probab.}, 19(1):281--317, 2009.

\bibitem[FN15a]{fn2013}
Patrik~L. Ferrari and Peter Nejjar.
\newblock Anomalous shock fluctuations in {TASEP} and last passage percolation models.
\newblock {\em Probab. Theory Related Fields}, 161(1-2):61--109, 2015.

\bibitem[FN15b]{FN15}
Patrik~L. Ferrari and Peter Nejjar.
\newblock Shock fluctuations in flat {TASEP} under critical scaling.
\newblock {\em J. Stat. Phys.}, 160(4):985--1004, 2015.

\bibitem[FN20]{FN20}
Patrik~L. Ferrari and Peter Nejjar.
\newblock Statistics of {TASEP} with three merging characteristics.
\newblock {\em J. Stat. Phys.}, 180(1-6):398--413, 2020.

\bibitem[FN24]{FN24}
Patrik~L. Ferrari and Peter Nejjar.
\newblock The second class particle process at shocks.
\newblock {\em Stochastic Process. Appl.}, 170:Paper No. 104298, 15, 2024.

\bibitem[FP05]{FP05}
Pablo~A. Ferrari and Leandro P.~R. Pimentel.
\newblock Competition interfaces and second class particles.
\newblock {\em Ann. Probab.}, 33(4):1235--1254, 2005.

\bibitem[FS06]{FS06}
Patrik~L. Ferrari and Herbert Spohn.
\newblock Scaling limit for the space-time covariance of the stationary totally asymmetric simple exclusion process.
\newblock {\em Comm. Math. Phys.}, 265(1):1--44, 2006.

\bibitem[GP90]{GP90}
J\"urgen G\"artner and Errico Presutti.
\newblock Shock fluctuations in a particle system.
\newblock {\em Ann. Inst. H. Poincar\'e{} Phys. Th\'eor.}, 53(1):1--14, 1990.

\bibitem[Joh00]{J2000}
Kurt Johansson.
\newblock Shape fluctuations and random matrices.
\newblock {\em Comm. Math. Phys.}, 209(2):437--476, 2000.

\bibitem[Joh03]{J03}
Kurt Johansson.
\newblock Discrete polynuclear growth and determinantal processes.
\newblock {\em Comm. Math. Phys.}, 242(1-2):277--329, 2003.

\bibitem[Lig99]{liggett1999sip}
Thomas~M Liggett.
\newblock {\em Stochastic interacting systems: contact, voter, and exclusion processes}.
\newblock Springer, 1999.

\bibitem[Lig05]{liggetIPS}
Thomas~M Liggett.
\newblock {\em Interacting Particle Systems}.
\newblock Springer Berlin, Heidelberg, 2005.

\bibitem[MQR21]{MQR2021}
Konstantin Matetski, Jeremy Quastel, and Daniel Remenik.
\newblock The {KPZ} fixed point.
\newblock {\em Acta Math.}, 227(1):115--203, 2021.

\bibitem[Nej18]{Nej18}
Peter Nejjar.
\newblock Transition to shocks in {TASEP} and decoupling of last passage times.
\newblock {\em ALEA Lat. Am. J. Probab. Math. Stat.}, 15(2):1311--1334, 2018.

\bibitem[PS02]{PS02}
Michael Pr\"ahofer and Herbert Spohn.
\newblock Scale invariance of the {PNG} droplet and the {A}iry process.
\newblock {\em J. Statist. Phys.}, 108(5-6):1071--1106, 2002.
\newblock Dedicated to David Ruelle and Yasha Sinai on the occasion of their 65th birthdays.

\bibitem[QR20]{qm2018}
Jeremy Quastel and Mustazee Rahman.
\newblock T{ASEP} fluctuations with soft-shock initial data.
\newblock {\em Ann. H. Lebesgue}, 3:999--1021, 2020.

\bibitem[Rez91]{FR91}
Fraydoun Rezakhanlou.
\newblock Hydrodynamic limit for attractive particle systems on {${\bf Z}^d$}.
\newblock {\em Comm. Math. Phys.}, 140(3):417--448, 1991.

\bibitem[RV23]{RV23}
Mustazee Rahman and Balint Virag.
\newblock Infinite geodesics, competition interfaces and the second class particle in the scaling limit, 2023.

\bibitem[Sas05]{Sas05}
T.~Sasamoto.
\newblock Spatial correlations of the 1{D} {KPZ} surface on a flat substrate.
\newblock {\em J. Phys. A}, 38(33):L549--L556, 2005.

\bibitem[Sch97]{Schutz97}
Gunter~M. Sch\"utz.
\newblock Exact solution of the master equation for the asymmetric exclusion process.
\newblock {\em J. Statist. Phys.}, 88(1-2):427--445, 1997.

\bibitem[Sep98]{ST98}
T.~Sepp\"al\"ainen.
\newblock Hydrodynamic scaling, convex duality and asymptotic shapes of growth models.
\newblock {\em Markov Process. Related Fields}, 4(1):1--26, 1998.

\bibitem[SS23]{SS23}
Timo Sepp\"al\"ainen and Evan Sorensen.
\newblock Global structure of semi-infinite geodesics and competition interfaces in {B}rownian last-passage percolation.
\newblock {\em Probab. Math. Phys.}, 4(3):667--760, 2023.

\end{thebibliography}

\bibliographystyle{alpha}

\end{document}